\def\XXint#1#2#3{{\setbox0=\hbox{$#1{#2#3}{\int}$ }
\vcenter{\hbox{$#2#3$ }}\kern-.6\wd0}}
\theoremstyle{plain}
\DeclareMathOperator*{\esssup}{ess\,sup}
\newtheorem{theorem}{Theorem}
 \newtheorem{proposition}{Proposition}[section]
\newtheorem{corollary}[proposition]{Corollary}
\newtheorem{lemma}[proposition]{Lemma}
\theoremstyle{definition}
\newtheorem{remark}[proposition]{Remark}
\newtheorem{definition}{Definition}
\def\bees{\begin{equation*}}
\def\eees{\end{equation*}}
 \def\bee{\begin{equation}}
\def\eee{\end{equation}}
\numberwithin{equation}{section}
\numberwithin{figure}{section}
\def\Det{{\mathcal{D}et}\,}
\def\cc{{\rm curl}^T {\rm curl}\,} 
\def\R{{\mathbb R}}
\def\e{\varepsilon} 
\def\g{\mathscr{G}} 
\def\n{\vec n} 
\def\E{\mathscr{E}}
\definecolor{Green}{rgb}{0, 0.65,0}
\definecolor{Random}{rgb}{0.7, 0.0,0.9}
\begin{document}

\title[The geometry of  $C^{1,\alpha}$  isometric immersions] 
{The  geometry of $C^{1,\alpha}$  flat isometric immersions}
\author{Camillo De Lellis and Mohammad Reza Pakzad}
\address{ Camillo De Lellis, School of Mathematics, Institute for Advanced Study, 1 Einstein Dr., Princeton NJ 05840, USA} 
  \email{camillo.delellis@math.ias.edu}

\address{Mohammad Reza Pakzad, Université de Toulon, CS 60584, 83041 TOULON CEDEX 9, France}
 \email{pakzad@univ-tln.fr}
 
 \subjclass[2010]{53C24,  53C21, 57N35,  53A05}
\keywords{Nash-Kuiper, rigidity of isometric immersions, differential geometry at low regularity}
\date{}
 
\begin{abstract}
We show that any isometric immersion of a flat plane domain into $\R^3$ is developable provided it   enjoys the little H\"older regularity $c^{1,2/3}$. In particular, isometric immersions of  local $C^{1,\alpha}$  regularity with $\alpha>2/3$ belong to this class. The proof is based on the existence of a weak notion of second fundamental form for such immersions, the analysis of the Gauss-Codazzi-Mainardi equations in this weak setting, and a parallel result on the very weak solutions to the degenerate Monge-Amp\`ere equation analyzed in \cite{lepamonge}.
\end{abstract}
\maketitle
 
\section{Introduction}
  
 \subsection{Background}
 
 It often happens, in differential geometry, as in other contexts where nonlinear PDEs appear, that the solutions to a given nonlinear system of equations satisfy also systems of higher order equations carrying significant geometric or analytic information.  A very basic example is the Gauss-Codazzi-Mainardi equations satisfied by any smooth isometric immersion of a 2 dimensional Riemannian manifold into $\R^3$.  Note that, in their turn,   these equations can be used to prove certain rigidity statements for the immersion, e.g.\@ to show that any isometric immersion of a flat domain is developable, i.e.\@ roughly speaking, at any given point $p$ its image contains a segment of $\R^3$ passing through $p$.
 
 The subtlety which is often glossed over in the context of classical differential geometry, where all the mappings are assumed 
 smooth, or at least twice or thrice continuously differentiable, is that,  due to nonlinearity,  the passage from say a given set of first order equations, e.g.\@  the system of isometric immersion equations in our example, to the higher order equations requires a minimum of regularity. There is no guarantee that in the absence of this regularity the geometric information hidden in the higher order equations would be accessible.  The above mentioned subtlety could be a source of confusion.  Coming back to our example, it is often assumed that all isometric images of flat domains must be ruled, which is  untrue. Already, at the dawn of modern differential geometry, no less figures than Lebesgue and Picard intuited the existence of a surface which could be  flattened onto the plane with no distortion of relative distances, but nowhere contained a straight  segment. To quote Picard\footnote{\cite[Page 555]{picard},  translated and quoted in \cite{cajori}.}: 
 
 \begin{quote}
 According to general practice, we suppose in the preceding analysis, as in all infinitesimal geometry of curves and surfaces, the existence of derivatives which we need in the calculus. It may seem premature to entertain a theory of surfaces in which one does not make such hypotheses. However, a curious result has been pointed out by Mr Lebesgue  (Comptes Rendus, 1899 and thesis)\footnote{\cite{leb}. See also \cite[Pages 164-165]{friedman} and the discussion therein of Lebesgue's heuristic construction. }; according to which one may, by the aid of continuous functions, obtain surfaces corresponding to a plane, of such sort that every rectifiable line of the plane has a corresponding rectifiable line of the same length of the surface, nevertheless the surfaces obtained are no longer ruled. If one takes a sheet of paper, and crumples it by hand, one obtains a surface applicable to the plane and made up of a finite number of pieces of developable surfaces, joined two and two by lines, along which they form a certain angle. If one imagines that the pieces become infinitely small, the crumpling being pushed everywhere to the limit, one may arrive at the conception of surfaces applicable to the plane and yet not developable [in the sense that there is no envelope of a family of planes of one parameter and not ruled].
 \end{quote}  

 
The  suggested constructions by Lebesgue and then Picard  fall short of being totally satisfactory as they point to continuous but yet not differentiable mappings. However, they were perhaps the heralds of a celebrated result due to Kuiper \cite{kuiper}, who based on the work by John Nash \cite{Nash1, Nash2} proved that any Riemannian manifold can be $C^1$ isometrically embedded into a Riemannian manifold of one dimension higher. (Actually, by Nash and Kuiper's construction,  $C^1$ isometric embeddings can approximate any short immersion). As a consequence, the flat disk can be $C^1$-isometrically embedded into any  given small ball of  $\R^3$ and hence the image cannot be ruled.   To make the matters worse, these embeddings can be made through recurrent self-similar corrugations \cite{borrelli1, borrelli2}, whose images seem, without a proof at hand, not to include any straight segment of $\R^3$ in any scale.  In contrast, and returning to our original developability statement in a higher regularity setting where the Gauss-Codazzi equations hold true,  Hartman and Nirenberg \cite{HaNi} proved, incidentally almost at the same time  Nash and Kuiper showcased their results,  that a $C^2$-regular isometric immersion of a flat domain must have a locally ruled surface as its image. We have come full circle, and this  discussed dichotomy leaves us with a fundamental question any analyst would like to ask: {\bf At what regularity threshold or thresholds between $C^1$ and $C^2$ the unruled isometries transition into the ruled  regime?} 

\subsection{Recent developments}

To put the problem described above in a broader context, we note that the above dichotomy -known in the recent literature as {\it flexibility} vs.\@ {\it rigidity}-  is not specific to the case of  isometric immersions. The Nash and Kuiper scheme for creating highly oscillatory anomalous solutions of typically lower  regularity  through iterations could be studied under the broader topic of convex integration \cite{gromov, spring} for the differential inclusions (or PDEs re-cast in this framework), and the involved  density or flexibility related results  are usually referred to as h-principle. The existence of such  h-principle is usually accompanied by a parallel rigidity property which indicates that the construction cannot be carried out in high regularity scales.  A notable example is the recent discovery by the first author and  Sz\'ekelyhidi \cite{DS}: They showed that the system of Euler equations in fluid dynamics is given to the convex integration method   and that Nash and Kuiper's iterations can be adapted in this case in order to create non-conservative compactly supported continuous flows approximating an open set of subsolutions (see \cite{Sz, DS-Bul} for a thorough discussion of the connection between the two problems of turbulence and isometric immersions).  Their results, and the subsequent work in improving the regularity of  the anomalous solutions  stood in contrast with the rigidity result reflected in the conservation of energy for solutions passing a certain regularity threshold ($\alpha>1/3$ for $C^{1,\alpha}$-H\"older continuous solutions) in \cite{CoTi, eyink}, as conjectured by Onsager \cite{onsager}. Their approach finally lead to the complete resolution of this conjecture by Isett in \cite{isett}; see  \cite{DS-survey} for a review of the history of the problem and the intermediate results. 
 
Coming back to the question of isometric immersions,  a parallel question is whether the images of isometric immersions into $\R^3$ of 
closed convex surfaces (i.e.\@ $2$-manifolds with no boundary and positive Gaussian curvature)  are rigid motions. This result follows from the convexity of the image of such surfaces. This convexity fails for $C^{1,\alpha}$ isometric immersions of convex surfaces if $\alpha<1/5$,  in which regime the above mentioned h-principle for isometric immersions holds true, as shown in \cite{1/5, SzCa2}, improving on the results by \cite{bori2, CDS}. On the other hand, Conti, De Lellis and Sz\'ekelyhidi proved in \cite{CDS, CDS-errata} that when $\alpha>2/3$, the isometric image of a closed convex surface  is convex, (from which it follows that the immersion is a  rigid motion), and that more generally the h-principle cannot hold true for isometric immersions of any elliptic $2$-manifold with or without boundary. Mischa Gromov conjectures in  \cite[Section 3.5.5.C, Open Problem 34-36]{gromov2} that  the  transition threshold is $\alpha=1/2$ in this case.   The best evidence so far for this conejcture is provided in   \cite{DeIn}, where the authors show that $C^{1,1/2}$ regularity could be the borderline regularity for the identity of intrinsic covariant derivative of the $2$-manifold and its geometric interpretation. 

This article concerns the fundamental question of the sub-$C^2$ regularity scales for which, given an isometric immersion of a flat domain, one is able to make sense of fundamentally geometric objects such as the second fundamental form and the Gauss-Codazzi equations. The final goal is to show that  the developability property of such isometric immersions  (as defined below in Definition \ref{defdev} and proven for $C^2$ isometries in  \cite{HaNi})  survives for $C^{1, \alpha}$   isometric immersions  if $\alpha>2/3$. As we shall see, parts of the project, e.g.\@ the definition of the second fundamental form, can be carried over for $\alpha>1/2$, but our analysis falls short of proving a weak variant of the Codazzi-Mainardi equations   for  $1/2<\alpha< 2/3$. In this manner, the question of the optimal threshold remains open in the flat case as it is for the elliptic case. 

Before proceeding, it should be mentioned that the developability of flat isometric immersions have been discussed in the literature in other regularity regimes.  Pogorelev \cite[Theorem  1, p. 603]{Po 73} proves this developability under the rather weak assumptions of  $C^1$ regularity and the existence and vanishing  of the total extrinsic curvature (defined as a measure). This result lies at the background of our conclusion through the analysis made in \cite{lepamonge} and \cite{Kor, Kor2}. On the other hand parallel results have been shown by the second author for $W^{2,2}$ isometric immersions \cite{Pak}, where a slight $C^1$ regularity gain in this case is also proven, and by Jerrard in \cite{J2010} for the class of Monge-Amp\`ere functions \cite{fu1}. The former result was based on observations made by Kirchheim on solutions to the degenarate Monge-Amp\`ere equation \cite[Chapter 2]{Ki-hab}, and lead to a subsequent statement regarding the smooth density of isometric immersions. These statements were generalized  to higher dimensional domains for the co-dimension 1 case in \cite{LP}. The latter approach by Jerrard \cite{Je1, J2010}, anchored in geometric measure theory, opened the path for proving $C^1$ regularity and a full range of developability results for isometric immersions of any dimension and co-dimension of sub-critical Sobolev regularity \cite{JP}.

\subsection{The main result}
 
 We first begin by a definition to clarify the notion of developabilty as we understand it. 
 
  \begin{definition}\label{defdev}
  Let $\Omega\subset \R^2$ be a domain and  $u: \Omega \to \R^k$ be $C^1$-smooth. We say $u$ is developable  if for all $x\in \Omega$, either $u$ is affine in a neighborhood of $x$, or there exists   a line  $L_x$ containing $x$ such that  the Jacobian derivative $\nabla u$ is constant on $l_x$, the connected component of $x$ in $L_x \cap \Omega$. 
 \end{definition}
  
  \begin{remark}
For bounded domains, $l_x$ is always a segment whose both ends lie on $\partial \Omega$.  We allow for unbounded domains, and hence $l_x$ could be a complete line, a ray, or an open segment. With some abuse of notation, we will refer to all of them  as \lq\lq segments" throughout the paper. 
     \end{remark}
    
There are other equivalent formulations for the developability whose equivalence we will show further on in Propostion \ref{loc=glo}.  In particular, local and global developability, as laid out in Corollary \ref{loc=glo-really}, are equivalent. Also, from Corollary \ref{dirunq2} and Lemma \ref{dirunq}   it follows that when $u$ is developable, the segments $l_x$ are uniquely determined outside the maximal affine region; they do neither intersect each other nor pass through the local constancy regions of the Jacobian derivative.

 In this article  we prefer to work with the little H\"older spaces. For  our choice of notations and precise definitions of these spaces  see Section \ref{toolbox}. Working with the little H\"older scale pinpoints  the  needed control -for the effectiveness of our arguments- on the oscillations of the mapping gradient.  The corresponding results in the classical H\"older regime is then obtained as a corollary.  
 
The main result of this article is the following theorem:

   \begin{theorem}\label{iso23}
 Let $\Omega\subset \R^2$ be a domain and $2/3 \le \alpha<1$. If $u \in c^ {1,\alpha}(\Omega, \R^3)$ is an isometric immersion, then $u$ is developable.  In particular,  if $\alpha>2/3$,  we have $C^{1,\alpha}(\Omega) \subset c^{1,2/3} (\Omega)$ and therefore all isometric immersions $u : \Omega \to \R^3$ of  local $C^ {1,\alpha}$ regularitDy are developable. 
 \end{theorem}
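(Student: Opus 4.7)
The strategy is to pass from pointwise to distributional geometry: define a weak second fundamental form $A$ associated with $u$, verify that it satisfies the Gauss equation $\det A=0$ (and a weak Codazzi-Mainardi identity) in the sense of distributions, and reduce the developability conclusion to the parallel result for very weak solutions of the degenerate Monge-Amp\`ere equation established in \cite{lepamonge}. The second assertion of the theorem is immediate from the embedding $C^{1,\alpha}(\Omega)\subset c^{1,2/3}(\Omega)$ for $\alpha>2/3$, so all the content lies in the first statement. Since $u$ satisfies $\partial_i u\cdot \partial_j u=\delta_{ij}$, the unit normal $n:=\partial_1 u\wedge \partial_2 u$ belongs to $c^{0,\alpha}(\Omega,\R^3)$, and the pointwise identity $n\cdot \partial_i u=0$ gives, formally, $A_{ij}=-\partial_i n\cdot \partial_j u$. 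I would take this as the \emph{definition} of the distribution $A_{ij}$, interpreting the right-hand side via Bony paraproducts (or, equivalently, via integration by parts against $c^{0,\alpha}$ test objects); for $\alpha>1/2$ this pairing is well-defined and $A$ is a symmetric distribution coinciding with the classical second fundamental form whenever $u$ is smooth.

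The central analytic step is then to prove the weak Gauss equation $\det A=0$ (the intrinsic metric being flat) and a weak Codazzi-Mainardi identity, both at the threshold $\alpha\ge 2/3$. Following a Constantin-E-Titi style scheme, I would mollify $u_\e:=u*\rho_\e$, compute the classical second fundamental form $A_\e$ of the smooth, nearly-isometric approximant $u_\e$, and analyse the commutator
\bees
\det A_\e-(\det A)_\e = E_\e.
\eees
Using that $\nabla u$ has little H\"older oscillations of size $o(\e^\alpha)$ at scale $\e$, and exploiting cancellations from the isometry constraint and a Piola-type identity for cofactors, the error scales as $o(\e^{3\alpha-2})$ and vanishes as $\e\to 0^+$ precisely when $\alpha\ge 2/3$; it is the \emph{little} H\"older assumption that secures an $o(1)$, rather than $O(1)$, bound at the borderline exponent. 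An analogous argument produces the weak Codazzi-Mainardi identity.

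With $\det A=0$ and the Codazzi-Mainardi system in hand, the final step is to invoke \cite{lepamonge}. After rotating coordinates so that one of the three coordinate projections of $u$ is a local diffeomorphism (possible since $\nabla u$ is continuous with orthonormal columns), I would represent the image of $u$ locally as the graph of a scalar function $v\in c^{1,\alpha}(U,\R)$ and verify that the distributional Hessian of $v$ equals $A$ up to a bounded invertible factor. The weak Gauss equation then translates into $\Det\,\nabla^2 v=0$ in the very weak sense of \cite{lepamonge}, and the developability theorem of that paper yields, through each non-affine point, a segment along which $\nabla v$ is constant; pulling this segment back through the graph parametrization gives a segment in $\Omega$ on which $\nabla u$ is constant, which is the local form of Definition \ref{defdev}. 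Global developability then follows from Proposition \ref{loc=glo}. The main obstacle throughout is the commutator estimate at $\alpha=2/3$, especially for Codazzi-Mainardi, which carries an extra derivative of $A$ and is, as the text notes, exactly why the method does not descend below $2/3$.
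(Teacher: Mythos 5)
Your overall architecture (a weak second fundamental form for $\alpha>1/2$, mollification and commutator estimates at the threshold, reduction to the degenerate Monge--Amp\`ere developability theorem of \cite{lepamonge}) matches the paper's, but you diverge at the decisive step of producing the scalar function $v$, and there the proposal has a genuine gap. The paper never represents the image of $u$ as a graph: it uses the weak Codazzi--Mainardi equations (${\rm curl}\,A=0$, which is exactly where $\alpha\ge 2/3$ and the little-H\"older hypothesis enter, via the $o(\e^{3\alpha-2})$ bound) to write the symmetric, curl-free $A$ as $\nabla^2 v$ for a potential $v$ \emph{on the original parametrizing domain}, proves $v\in c^{1,\alpha}$ by the dedicated criterion of Proposition \ref{C-alpha-control}, and obtains $\Det D^2 v=0$ by expanding $\det \nabla^2 v^{(\e)}$ with $\nabla^2 v^{(\e)}=A^\e$ plus elliptic error terms. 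In your plan the Codazzi--Mainardi identity is proved but never used, and instead you take $v$ to be a local graph function of the image surface. The asserted identification \lq\lq the distributional Hessian of $v$ equals $A$ up to a bounded invertible factor'' requires composing an order-one distribution with a change of variables that is only $C^1$ and multiplying it by merely continuous matrix fields; neither operation is defined at this regularity without an argument of the very kind you are trying to bypass (note also that \lq\lq$\det A$'' is not itself a distribution here --- the paper only controls $\det A^\e$, and the $2/3$ threshold does not come from the Gauss equation, which needs only $\alpha\ge 1/2$, but from Codazzi--Mainardi).

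More seriously, $\Det D^2 v=0$ for the graph function does not \lq\lq translate'' from the Gauss equation. By definition $\Det D^2 v=-\frac12\cc(\nabla v\otimes\nabla v)$, and since $\nabla v\otimes\nabla v=g_v-{\rm Id}$ with $g_v$ the first fundamental form in graph coordinates, verifying it is a curvature computation in coordinates where the metric is \emph{not} Euclidean. The engine of all the borderline estimates in the paper is \eqref{gest}, $\|\g_\e-\E_2\|_{1;V}\le \|u\|^2_{1,\alpha}\,o(\e^{2\alpha-1})$, available precisely because $u$ is an isometric parametrization, which makes the mollified Christoffel symbols $o(\e^{2\alpha-1})$-small; in graph coordinates the Christoffel symbols are only $O(\e^{\alpha-1})$, their quadratic contributions do not vanish, and mollification does not commute with the $C^1$ change of variables, so the Constantin--E--Titi-type commutator bound you invoke does not close without exhibiting new cancellations. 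Finally, your last step --- pulling a ruling of $\nabla v$ back to a segment of $\Omega$ on which the full Jacobian $\nabla u$ (not merely the normal) is constant --- is asserted without proof; it can be repaired by a geometric argument (in a convex neighborhood the preimage under an isometric immersion of a straight segment of equal extrinsic and intrinsic length is a straight segment, and constancy of the tangent plane together with constancy of the derivative along the ruling forces constancy of the whole frame), but the paper instead proves a clean equivalence (Proposition \ref{u-dev-v}) by testing $\nabla u$ and $\nabla v$ against ${\rm div}(\psi\vec\eta)$, which in addition yields the converse implication.
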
     
     
    \begin{remark}
    Based on the observations made in this article, the proofs of \cite{CDS, CDS-errata} for the rigidity statements are also valid under little H\"older $c^{1,2/3}$ regularity assumption. A more difficult task would be to  consider other types of regularity regimes, e.g.\@ the fractional Sobolev one. For this direction, See \cite{LPS21} and the  discussions therein.
 \end{remark}
     
Per \cite[Theorem  1, p. 603]{Po 73}, what we need  to show is that the image of the Gauss map $\vec n$ is of measure zero.  Following \cite[Corollary 5]{CDS} it is possible to prove that the  Brouwer degree ${\rm deg} (y, V  ,\vec n)$ of   $\vec n$ of the immersion vanishes for all open sets $V\subset \Omega$ and $y\in \R^2$, when defined. However this is not sufficient  to conclude with the needed statement $|\vec n(\Omega)|=0$ for developability:  Through a similar approach as in \cite[Section 5]{MalyMartio}, for each $\alpha\in (0,1)$,  one can construct  a map   in $C^{0, \alpha}(\Omega, \R^2)$ whose local degree vanishes everywhere, but whose image is onto the unit square. Hence the zero degree Gauss map could still have an image of  positive measure and this obstacle necessitates another strategy.   Indeed, our argument  uses a slight improvement on the parallel result proved for the degenerate {\it very weak Monge-Amp\`ere equation} in \cite{lepamonge}, which  uses the degree formula for \underline{both} the gradient of  the solution, and  its  affine perturbations of positive degree, as follows:

The very weak Hessian determinant \cite{Iwa, FM}  of a given
function $v\in W^{1,2}_{loc}(\Omega)$ is defined to be 
$$ \Det D^2v :=- \frac12 \Big (\partial_{11} (\partial_2 v)^2+ \partial_{22} (\partial_1 v)^2 -2 \partial_{12} (\partial_1 v \partial_2 v)\Big ) = -\frac 12 \cc  (\nabla v \otimes \nabla v) $$ 
in the sense of distributions. The operator $\Det D^2$ coinsides with the usual Monge-Amp\`ere operator $\det \nabla^2$ 
when $v\in C^2$ and $\nabla^2 v$ stands for the Hessian matrix field of $v$. (A $C^3$ regularity is needed for a straightforward calculation). The following statement regarding the degenerate Monge-Amp\`ere equation is a crucial ingredient of our analysis (Compare with  \cite[Theorem 1.3]{lepamonge}).

\begin{theorem}\label{mp23dvp}
Let $\Omega \subset \R^2$ be an open domain and let
$v\in c^{1, 2/3} (\Omega)$ be such that   
$$ \Det D^2 v =0, $$ 
in $\Omega$. Then $v$ is developable.
\end{theorem}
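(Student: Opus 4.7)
The plan is to adapt the strategy of \cite[Theorem 1.3]{lepamonge}, where the analogous statement is proved for $v\in C^{1,\alpha}$ with $\alpha>2/3$, pushing the quantitative estimates to the little H\"older endpoint $c^{1,2/3}$ and then invoking Pogorelev's classical developability theorem. The three stages of the argument are a commutator estimate bounding the Hessian determinant of a mollification $v_\varepsilon=v*\rho_\varepsilon$; a Brouwer-degree computation carried out both for $\nabla v$ and for its affine perturbations of positive degree, forcing $|\nabla v(\Omega)|=0$; and an appeal to \cite[Theorem 1, p.\,603]{Po 73}.

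For Step~1, since $\cc$ commutes with convolution and $\Det D^2 v=0$,
$$\det\nabla^2 v_\varepsilon \;=\; -\tfrac12\,\cc\bigl((\nabla v\otimes\nabla v)*\rho_\varepsilon \,-\, \nabla v_\varepsilon\otimes\nabla v_\varepsilon\bigr).$$
A Constantin--E--Titi type expansion of the commutator, localised around $\nabla v_\varepsilon(x)$ and sharpened by the fact that the $c^{1,2/3}$-seminorm at scale $\varepsilon$ is $o(1)$ rather than $O(1)$, yields $\|\det\nabla^2 v_\varepsilon\|_{L^\infty_{\mathrm{loc}}}=o(1)$ as $\varepsilon\to 0^+$. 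This is the sole place where the little H\"older, rather than ordinary H\"older, hypothesis is decisive: the same expansion applied to $v\in C^{1,\alpha}$ with $\alpha>2/3$ instead produces the decaying power $O(\varepsilon^{3\alpha-2})$.

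For Step~2, fix $\lambda>0$ and a smooth subdomain $V\Subset\Omega$, and set $f^\lambda_\varepsilon := \nabla v_\varepsilon+\lambda\,\mathrm{id}$, with uniform limit $f^\lambda := \nabla v+\lambda\,\mathrm{id}$. Since $\det Df^\lambda_\varepsilon=\det\nabla^2 v_\varepsilon+\lambda\Delta v_\varepsilon+\lambda^2$, the area formula combined with Step~1 (to kill the $\det\nabla^2 v_\varepsilon$ term), divergence-theorem identification of $\int_V\Delta v_\varepsilon\,dx$, and stability of the Brouwer degree under uniform approximation yields, in the limit $\varepsilon\to 0$,
$$\int_{\R^2}\deg(y,V,f^\lambda)\,dy \;=\; \lambda\int_{\partial V}\nabla v\cdot\nu\,d\mathcal{H}^1 \;+\; \lambda^2|V| \;=\; O_V(\lambda).$$
The improvement beyond the degree-zero identity of \cite[Corollary 5]{CDS} (recovered by sending $\lambda\to 0^+$) comes from the strict monotonicity of $\lambda\,\mathrm{id}$, which forces $\deg(\cdot,V,f^\lambda)\ge 0$ on almost every point of $f^\lambda(V)$; a covering-comparison between $f^\lambda(V)$ and $\nabla v(V)$ (which differ only by the vanishing perturbation $\lambda\,\mathrm{id}$) then forces $|\nabla v(V)|=0$ for every such $V$, hence $|\nabla v(\Omega)|=0$. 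Step~3 is immediate: the equality $|\nabla v(\Omega)|=0$ identifies with the vanishing of the extrinsic Gauss curvature measure of the graph of $v$, so \cite[Theorem 1]{Po 73} (reformulated in the Monge--Amp\`ere setting in \cite{Kor,Kor2,lepamonge}) delivers developability in the sense of Definition \ref{defdev}.

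The decisive obstacle is Step~1, where the commutator estimate is critical at $\alpha=2/3$ and only the little H\"older improvement (modulus $o(1)$ in place of $O(1)$) supplies the $o(1)$-decay of $\det\nabla^2 v_\varepsilon$ needed to close the degree computation; any weaker modulus scheme degenerates to the naive $O(1)$ bound, which is useless here. A secondary technical point is the bookkeeping in Step~2, namely verifying non-negativity of $\deg(\cdot,V,f^\lambda)$ on the image and extracting a clean Lebesgue-measure comparison between $f^\lambda(V)$ and $\nabla v(V)$ that survives the double limit $\varepsilon\to 0$, $\lambda\to 0^+$.
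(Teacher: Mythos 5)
Your overall plan --- mollify, use a commutator estimate to exploit $\Det D^2v=0$, run a degree argument for $\nabla v$ \emph{and} for an affine perturbation of positive degree, and finish with a classical developability criterion --- is indeed the strategy of the paper, which simply transplants the proof of \cite[Theorem 1.3]{lepamonge} to $c^{1,2/3}$ by upgrading the estimates of Lemma \ref{estimcvls} via Proposition \ref{ltl0} to establish the two degree formulas \eqref{degree}. However, two of your key steps have genuine gaps. First, the claimed bound $\|\det\nabla^2 v_\e\|_{L^\infty_{\rm loc}}=o(1)$ is not available: the commutator $\nabla v_\e\otimes\nabla v_\e-(\nabla v\otimes\nabla v)_\e$ is controlled only in $C^1$ (by $o(\e^{2\alpha-1})$), and $\det\nabla^2 v_\e=-\tfrac12\,\cc$ of it involves one more derivative, so the best pointwise bound is $o(\e^{2\alpha-2})$, which blows up at $\alpha=2/3$. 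What one actually has, and what the scheme uses (compare Proposition \ref{Adet0}), is a distributional estimate $|\int\psi\,\det\nabla^2 v_\e|\lesssim\|\psi\|_{W^{1,1}}\,o(\e^{2\alpha-1})$ after one integration by parts. This matters beyond bookkeeping: in the pointwise-in-$y$ degree formulas the test function is $\varphi$ composed with the mollified map, whose $W^{1,1}$-norm costs an extra factor $O(\e^{\alpha-1})$; the resulting rate $o(\e^{3\alpha-2})$ is precisely where the threshold $2/3$ and the little-H\"older gain enter, and your accounting of ``where $c^{1,2/3}$ is decisive'' misses this.

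Second, and more seriously, the perturbation $f^\lambda=\nabla v+\lambda\,{\rm id}$ does not do the job. Since $v+\tfrac{\lambda}{2}|x|^2$ is in no way convex for a $c^{1,2/3}$ function $v$, $f^\lambda$ is not monotone and there is no justification for $\deg(\cdot,V,f^\lambda)\ge 0$; moreover, even non-negativity (or vanishing) of the degree a.e.\ would not bound $|f^\lambda(V)|$ --- this is exactly the obstruction the paper stresses via the Mal\'y--Martio type examples \cite{MalyMartio} of H\"older maps with zero degree everywhere whose image fills a square. The paper (following \cite{lepamonge}) instead perturbs by a rotation, $F_\delta(x)=\nabla v(x)+\delta(-x_2,x_1)$: then the Jacobian cross term ${\rm cof}(\nabla^2 v_\e):\delta J$ vanishes identically (symmetric against antisymmetric), so $\det\nabla F_{\delta,\e}=\det\nabla^2 v_\e+\delta^2$ with the first term weakly negligible and the second strictly positive, which is what yields the crucial inequality $\deg(F_\delta,U,y)\ge 1$ for all $y\in F_\delta(U)\setminus F_\delta(\partial U)$ in \eqref{degree}; with $\lambda\,{\rm id}$ the cross term $\lambda\Delta v_\e=O(\e^{\alpha-1})$ survives in every local degree computation and your argument does not close. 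Finally, a minor point: once $|\nabla v|$ has small image the scalar conclusion is obtained via Korobkov's theorem \cite{Kor,Kor2} (condition (c) of Proposition \ref{loc=glo}) together with the local-to-global equivalence, rather than via \cite{Po 73} directly, which concerns surfaces with vanishing extrinsic curvature measure.
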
 
    
  \begin{remark}
Note that per Corollary \ref{loc=glo-really} the assumption $v\in C^{1,\alpha} (\overline \Omega)$ in \cite[Theorem 1.3]{lepamonge}
can too be easily relaxed to the local  regularity $v\in C^{1,\alpha}(\Omega)$  in any bounded or unbounded domain.   
\end{remark} Theorem \ref{mp23dvp} can be proved following the exact footsteps of Lewicka and the second author in  proving \cite[Theorem 1.3]{lepamonge}, taking into account Lemma \ref{estimcvls} and Proposition \ref{ltl0} below in showing the following degree formulas for any open set $U\subset \Omega$:
\begin{equation}\label{degree}
\begin{array}{ll}
\forall y\in \R^2 \setminus \nabla v(\partial U)  & {\rm deg} (\nabla v, U, y) =0,  \quad \\ 
\forall \, 0<\delta \ll 1 \quad \forall y\in F_\delta (U) \setminus F_\delta (\partial U) &  {\rm deg} (F_\delta, U, y) \ge 1, 
\end{array}
\end{equation} where $F_\delta(x_1, x_2):= \nabla v(x_1, x_2 ) + \delta (-x_2, x_1)$. See \cite[Equations (7.6) and (7.9)]{lepamonge}. 
The rest of the proof remains unchanged. We will leave the verification of details to the reader.

 The main strategy of our proof of Theorem \ref{iso23} is hence  to relate the given isometric immersion $u\in c^{1,2/3}$ to a scalar function $v$ which satisfies the assumptions of Theorem \ref{mp23dvp}: In a first step, we show that a notion of the second fundamental form $A$ in the weak sense exists for immersions of   $C^{1,\alpha}$-regularity for $\alpha>1/2$.  Next, the Codazzi-Mainardi equations is used to 
 prove that if $u$ is isometric, $A$ is  curl free, implying that it must be the Hessian matrix of a scalar function, namely  the sought after function $v$.
We then  show the required regularity for $v$, and proceed to  prove using the Gauss equation  that $v$ satisfies the very weak degenerate Monge-Amp\`ere equation as required by the assumptions of  Theorem \ref{mp23dvp}. Finally, we need to  prove that developability of $v$, as derived from Theorem \ref{mp23dvp}, implies the developability of the isometric immersion $u$. Apart from Theorem \ref{mp23dvp}, the $2/3$-H\"older exponent regularity is only required for proving  that a weak version of Codazzi-Mainardi equations holds for isometric $u \in c^{1,2/3}$, i.e.\@ when we need to show that $A$ is curl free.

The article is organized as follows: In Section \ref{developability}, we will  present and prove a few statements regarding the developability properties of $C^1$ mappings. In Section \ref{toolbox}, we will gather a few analytical tools which deal with properties of H\"older continuous functions and with quadratic differential expressions in terms of functions of low  regularity. In particular, our basic proposition \ref{lem1} will allow us to define a second fundamental form for the immersions of H\"older type  regularity. The subsequent section is dedicated to the definition and properties  of this second fundamental form. In Section \ref{thmproof}, building on the previous sections, we   complete the proof of Theorem  \ref{iso23}.   Appendix \ref{component} is dedicated to a side result (Proposition \ref{compdev}) on the developability of each component of the immersion, which can be  shown independently with a shorter proof.  Finally, in Appendix \ref{litmod}  the proof of some standard facts regarding little H\"older spaces are presented.  

\subsection {Acknowledgments.}
This project was based upon work supported by the National Science Foundation and was partially funded by the Deutsche Forschungsgemeinschaft (DFG, German Research Foundation).  M.R.P. was supported by the NSF award DMS-1813738 and by the DFG  via SFB 1060 - Project Id  211504053.   The second author would like to personally thank  Stefan M\"uller  for the opportunity  to work at IAM, University of Bonn, Germany, where part of this work was completed, and for his kind support.

  \section{Preliminaries: Developable mappings}\label{developability}

We have gathered in this section a few statements we will need, and their proofs, regarding the developable mappings in two dimensions.   Most of the material in this section are well known and can be found in one form or another in the   literature on the topic \cite{HaNi, Po 73, ChL, Ki-hab,  VWKG, Pak, MuPa, Kor, Kor2, Ho1, Ho2, J2010, LP, JP, lepamonge}, but the authors  do not know of any instance where the following statements are explicitly formulated in concert.  As already observed in \cite{Ho1}, much of the developability properties of a mapping concern the level sets of  its Jacobian derivative, so we formulate our statements having the mappings  $f= \nabla u$ in mind, where $u$   is a developable mapping. 

\subsection{Global vs.\@ local developability} Developability  can be defined by local or global formulations,  which turn out to be equivalent.  There is a risk of confusion, which must be avoided, due to slight differences between the possible formulations (see e.g.\@ the footnote on p. 875 in  \cite{Kor2}).  Also, it would be useful to have a set of equivalent statements at hand to streamline the arguments. The following proposition states three equivalent conditions which the  Jacobian derivative of a  $C^1$ mapping can satisfy to be developable:

\begin{proposition}\label{loc=glo} Let $\Omega\subset \R^2$ be any domain and let $f:
\Omega \to \R^m$. If $f$ is continuous, then  the following three conditions are equivalent. 

\begin{itemize}
\item[(a)] For all $x\in \Omega$, either $f$ is constant in a neighborhood of $x$, or there exists a line  $L$ containing $x$ such that $f$ is constant on the connected component of $x$ in $L\cap \Omega$.

\medskip 
\item[(b)] For all $x\in \Omega$, there exists an open disk $B_x \subset \Omega$ centered at $x$ with the following property:  
To all  $p\in B_x$, we can associate a line $L_p$ containing $p$ such that  $f$ is constant on the segment $\tilde l_p =L_p\cap B$.  Moreover, for any $y,z\in B_x$, $\tilde l_y \cap \tilde l_z = \emptyset$, or $\tilde l_y= \tilde l_z$. 
\medskip
\item[(c)] For all $x\in \Omega$, there exists an open disk $B_x$ centered at $x$ in $\Omega$ such that 
for all  $p\in B_x$, there exists a line $L \subset \R^2$ containing $p$ so that $f$ is constant on the segment $L\cap B_x$. 
\end{itemize}
\end{proposition}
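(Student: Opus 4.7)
The plan is to close the cycle $(a) \Rightarrow (c) \Rightarrow (b) \Rightarrow (a)$, where $(a) \Rightarrow (c)$ and $(b) \Rightarrow (c)$ will be immediate: for any $x \in \Omega$, take $B_x$ to be any open disk with closure in $\Omega$, and observe that for each $p \in B_x$ the line supplied by (a) or (b) restricts to a chord $L_p \cap B_x$ that trivially sits inside the connected component of $p$ in $L_p \cap \Omega$. The substantive content lies in the implications $(c) \Rightarrow (b)$ (adding non-crossing) and $(b) \Rightarrow (a)$ (extending a local segment to the full connected component).

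The pivotal lemma I would establish first is a uniqueness statement for the line direction. Letting $C := \{x \in \Omega : f \text{ is constant in a neighborhood of } x\}$, which is open, I claim that for each $p \in \Omega \setminus C$ the line through $p$ on which $f$ is locally constant is uniquely determined. Indeed, if two distinct such lines $L \ne L'$ existed through $p$, then for each $q$ in a punctured neighborhood of $p$ the (c)-line $L_q$ is parallel to at most one of $L, L'$ and so must hit the other inside the ambient disk, forcing $f(q) = f(p)$; hence $p \in C$, a contradiction. A small extra argument along the same lines shows that the direction map $p \mapsto L_p$ is continuous on $\Omega \setminus C$.

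For $(c) \Rightarrow (b)$, I would then construct inside $B_x$ the required foliation by using the unique canonical line at each $p \in B_x \setminus C$ and filling in $B_x \cap C$ by a parallel family, with the common direction on each connected component of $B_x \cap C$ chosen to match the limit of the canonical directions from the non-constant side. Non-crossing then follows: if $\tilde l_y \cap \tilde l_z = \{p\}$ with $\tilde l_y \neq \tilde l_z$, uniqueness at $p$ is violated when $p \notin C$, while the parallel choice is violated when $p \in C$. For $(b) \Rightarrow (a)$, given $x$ with its line $L$ from (b), I would let $S$ be the set of points in the connected component of $x$ in $L \cap \Omega$ at which $f$ equals $f(x)$; it is closed by continuity, contains a neighborhood of $x$ in $L$ by (b), and I would show it is open. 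The opening step at $y \in S$ invokes (b) at $y$ to produce a candidate line $L'_y$ and shows $L'_y = L$: trivially when $y \in C$, and when $y \notin C$ by combining the uniqueness of the canonical direction at $y$ with the fact that $L$ serves as a canonical line at $y$, since $f \equiv f(x)$ on the $S$-side of $L \cap B_y$ and the non-crossing analysis inside $B_y$ rules out any other direction for $L'_y$.

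The main obstacle I anticipate is the coordination across $\partial C$, both in $(c) \Rightarrow (b)$ when matching the parallel foliation on $B_x \cap C$ to the canonical directions outside, and in $(b) \Rightarrow (a)$ when the approach to an endpoint $y \notin C$ of $S$ along $L$ passes only through points of $C$ whose constancy radii shrink to zero, so that their two-dimensional neighborhoods of constancy do not overlap $y$. In that delicate subcase, one must extract control on the canonical direction off $L$ near $y$ --- using continuity of the direction map on $\Omega \setminus C$ applied to sequences approaching $y$ transverse to $L$ --- to conclude that the canonical direction at $y$ nonetheless agrees with that of $L$, so that the continuation argument still goes through.
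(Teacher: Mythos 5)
The essential gap is your very first step: you declare (a) $\Rightarrow$ (c) \lq\lq immediate'', taking for $B_x$ \emph{any} disk compactly contained in $\Omega$ and using \lq\lq the line supplied by (a)''. But condition (a) supplies no line at all at a point $p$ of the local-constancy set $C_f$; it only gives a possibly tiny neighborhood of constancy, and to verify (c) on a disk $B_x$ centered at some $x\notin C_f$ you must produce, for \emph{every} such $p\in B_x\cap C_f$, a line whose entire chord of $B_x$ is a constancy set. An arbitrary disk certainly does not work (compare the function $f_2$ of Remark \ref{exm}: it satisfies (a) and (b), yet there are points through which no line is constant across the large disk), and even for well-chosen disks this is exactly the hard half of Proposition \ref{loc=glo}: in the paper it is the implication (a) $\Rightarrow$ (b), which needs the triangle Lemma \ref{segs}, uniqueness of the constancy line (Corollary \ref{dirunq2}), the non-crossing Lemma \ref{dirunq}, and the quadrilateral construction showing that each component of $C_f$ near $x$ is trapped between two constancy segments and that $f$ is constant on the region between them. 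Since your cycle is (a) $\Rightarrow$ (c) $\Rightarrow$ (b) $\Rightarrow$ (a), this missing link collapses the whole argument; the implications you do work out are in substance the direction the paper disposes of with the shorter (c) $\Rightarrow$ (a) argument.

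There are also gaps in the parts you do argue. Your uniqueness lemma is derived from (c) alone, but the disk $B_q$ attached by (c) to a nearby point $q$ carries no lower bound on its radius: the intersection of $L_q$ with $L$ or $L'$, even when it lies close to $p$, need not lie in $L_q\cap B_q$, so you cannot conclude $f(q)=f(p)$; in the paper uniqueness (Corollary \ref{dirunq2}) is proved only once (a) is available, where constancy holds along full connected components in $\Omega$. Likewise, in (c) $\Rightarrow$ (b) you fill a component of $B_x\cap C$ with a \emph{parallel} family matched to \lq\lq the'' limiting canonical direction from the non-constant side; but the two constancy lines bounding such a component need not be parallel, so one parallel family cannot match both and its lines will cross one of the bounding canonical lines inside $B_x$, destroying the non-crossing property required in (b) --- this is why the paper's construction uses the pencil of lines through the external intersection point of the two bounding directions (and a parallel family only when they are parallel). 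These last two points look repairable with more care, but the trivialized (a) $\Rightarrow$ (c) is a genuine missing proof, not an omission of routine detail.
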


\begin{corollary}\label{loc=glo-really}
$u\in C^1(\Omega, \R^k)$  is developable if and only if  for any point in $x\in \Omega$, there exists a neighborhood $V$ of $x$ on which the restriction $u|_{V}$ is developable.   
\end{corollary}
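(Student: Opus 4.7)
The plan is to reduce both directions to Proposition~\ref{loc=glo} applied to the continuous map $f=\nabla u$. Since $u\in C^1(\Omega,\R^k)$, being affine in a neighborhood of a point is equivalent to having a constant Jacobian derivative in that neighborhood, so Definition~\ref{defdev} for $u$ on $\Omega$ is literally condition~(a) of Proposition~\ref{loc=glo} for $f=\nabla u$ on $\Omega$, and similarly with $\Omega$ replaced by any open $V\subset\Omega$.

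The forward implication is immediate from Definition~\ref{defdev}. If $V$ is any open neighborhood of $x$ in $\Omega$ and $p\in V$, then either $u$ is affine in an $\Omega$-neighborhood of $p$ (whose intersection with $V$ is still an affine neighborhood of $p$ in $V$), or the line $L_p$ furnished by developability on $\Omega$ satisfies that $\nabla u$ is constant on the connected component $l_p$ of $p$ in $L_p\cap\Omega$. The connected component of $p$ in $L_p\cap V$ is contained in $l_p$, so $\nabla u$ is still constant on it, and $u|_V$ meets Definition~\ref{defdev}.

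For the converse, suppose that at each $x\in\Omega$ we have a neighborhood $V_x\subset\Omega$ on which $u|_{V_x}$ is developable. Then $\nabla u|_{V_x}$ satisfies condition~(a) of Proposition~\ref{loc=glo} on $V_x$, and by the equivalence (a)$\Leftrightarrow$(c) of that proposition it also satisfies condition~(c) on $V_x$. Applying (c) at the point $x$ itself yields an open disk $B_x$ centered at $x$ with $B_x\subset V_x\subset\Omega$ such that for every $p\in B_x$ there is a line $L\ni p$ along which $\nabla u$ is constant on the whole segment $L\cap B_x$. But this is precisely what condition~(c) of Proposition~\ref{loc=glo} demands at $x$ for $f=\nabla u$ viewed on $\Omega$. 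Since $x\in\Omega$ was arbitrary, $\nabla u$ satisfies~(c) on $\Omega$, hence, by the proposition, also~(a) on $\Omega$, which by the opening observation is exactly the developability of $u$.

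The only point requiring care — and where the strength of Proposition~\ref{loc=glo} is essential — is the tension between condition~(a), whose line segment is taken as the connected component inside the (possibly large) domain $\Omega$, and condition~(c), which is genuinely local to a single ball $B_x$. Local developability gives us the local version~(c) at every point, and Proposition~\ref{loc=glo} promotes this to the global version~(a); neither direction of the corollary presents any further obstacle beyond this translation.
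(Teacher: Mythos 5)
Your proof is correct and follows essentially the route the paper intends: the corollary is stated as a consequence of Proposition~\ref{loc=glo}, with the forward direction being a trivial restriction of Definition~\ref{defdev} and the converse obtained by passing from condition~(a) on each neighborhood to the purely local condition~(c), which then holds on all of $\Omega$ and yields~(a) there. Your identification of the (a)--(c) equivalence as the crux, and the observation that a disk $B_x\subset V_x\subset\Omega$ witnessing~(c) locally also witnesses it globally, is exactly the intended argument.
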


\begin{remark}
Local regularity assumptions are sufficient in the assumptions of Theorems \ref{iso23} and \ref{mp23dvp} because of 
the equivalence of global and local developability as formulated above.   
\end{remark} 

\begin{remark}\label{exm}
The continuity of $f$ is necessary for the implications ${\rm (a)} \Rightarrow {\rm (b), (c)}$ in Proposition \ref{loc=glo} as the following example demonstrates. The function 
$$
f:\R^2 \to \R, \quad f(s,t):= \begin{cases} 0  & st \neq 0 \\ 1 & st=0, \end{cases} 
$$ satisfies condition (a) but not (b) or (c).  Also, the geometric configuration of the level sets of $f$ in \cite[Figure 1]{Pak} shows the necessity of the distinction between the points in the local constancy regions from other points in condition (a). In other words, a global formulation of condition (b) is not a statement equivalent to (b): A continuous function  on a convex domain $\Omega$ can satisfy conditions (a) and (b) while there are points $x\in\Omega$ such that  for no line $L$ containing $x$ the function is constant on the intersection $L\cap \Omega$.
 \end{remark}
 
 \begin{remark}
In the  recent literature on similar problems, global (and hence inequivalent) versions of condition (b) are formulated as \lq \lq developability of $f$''  in \cite[Definition 2.29]{Ki-hab} and \lq \lq $\Omega$-developability of $f$ on $\Omega$" in \cite[Section 3.1]{Ho1}. On the other hand condition (a) appears in the statement of \cite[Theorem II]{Pak}, and then in \cite{MuPa}  as the \lq \lq  condition (L)". It  is also equivalent to $f$ being \lq \lq countably developable"  according to  \cite[Definition 1]{Ho1} and \cite{Ho2}. (Note that \cite[Assumption (24)]{Ho1} is redundant for continuous $f$; see Lemma \ref{dirunq}).   Finally,  it is condition (c) that is stated as the property of the gradients (with empty-interior images) of  
$C^1$ mappings in the main result of \cite{Kor, Kor2}. (This local result was an ingredient of the proof of Theorem \ref{mp23dvp} in \cite{lepamonge}).  Generalizations of (a)  to weaker regularity or higher dimensional settings can be found in \cite{VWKG, J2010, JP}.
\end{remark}

\begin{remark}
The developability of a $C^1$ mapping  on $\Omega$, as formulated in Definition \ref{defdev}, translates therefore to  its Jacobian derivative  satisfying condition (a). To clarify some discrepancy in the literature, we emphasize that we prefer to reserve the term  {\it developable} for those mappings whose  Jacobian derivatives
are  constant along lower dimensional flat strata in one form or another \cite{LP, JP}, rather than directly using it for their 
derivative mappings as done in \cite[Definition 2.29]{Ki-hab} or \cite{Ho1, Ho2}. This is for historical reasons,  since this term  is conventionally  used to refer  to (ruled)  smooth surfaces of zero Gaussian  curvature \cite{doCar}. 
\end{remark}
\begin{proof} $ $

\medskip 

{(a)} $\Rightarrow$  {(b)}: 

This is implicitly proved  in \cite[Lemma 3.7]{LP}. We pursue 
a slightly different strategy.   For any $f: \Omega \to \R$, let  $C_{f}$  be the set on which $f$ is locally constant: 
 $$
 C_{f} := \{ x\in \Omega;\,\,  f \,\, \mbox{is  constant in a neighborhood of}\,\, x\}.
 $$   Note that $f$ is constant on the connected components of the open set $C_{f}$.  
 If $f$ satisfies the condition (a) of the proposition, for each $x\in \Omega \setminus C_f$ we choose 
 a line $L_x$ so that  $f$ is constant on the connected component $x$ in $L_x\cap \Omega$, denoted by $l_x$. 
 
We begin by the  following simple but useful lemma.  
 
 \begin{lemma}\label{segs}
 Let $f: \Omega \to  \R^m $ be continuous  satisfy condition {\rm (a)}  in Proposition \ref{loc=glo}.  Let $\Delta \subset \Omega $ be a  closed triangular  domain. 
 If $f$ is constant on  two edges of $\Delta$, then $f$ is constant on $\Delta$. 
 \end{lemma}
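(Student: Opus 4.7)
My plan is to prove that $f$ takes everywhere on $\Delta$ the common value $v_0$ assumed by $f$ on the two special edges. Since any two edges of $\Delta$ share a vertex, continuity of $f$ at this vertex forces the two constants to agree, so $v_0$ is well defined. I fix an arbitrary $x\in \Delta$ and dichotomize according to whether $x\in C_f$ or not.

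If $x\notin C_f$, condition (a) provides a line $L_x$ through $x$ such that $f$ is constant on the connected component $l_x$ of $x$ in $L_x\cap \Omega$. Because $\Delta\subset\Omega$ and the chord $L_x\cap \Delta$ is a connected subset of $L_x\cap \Omega$ containing $x$, it is contained in $l_x$, so $f$ assumes the value $f(x)$ at both of its endpoints on $\partial \Delta$. A pigeonhole on the three sides of $\Delta$ now concludes the case: unless $L_x$ supports one of the edges, $L_x$ meets each side in at most one point, so the two endpoints lie on two distinct sides, and at least one of them belongs to $AB$ or $AC$, forcing $f(x)=v_0$. The degenerate configurations in which $L_x$ carries an edge or passes through a vertex are handled by the same principle, since in each of them the component $l_x$ is large enough to capture a point where $f=v_0$ is already known (notably, if $L_x$ is the support line of $BC$ then $l_x\supset BC$ and hence contains the vertices $B,C\in AB\cup AC$).

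If instead $x\in C_f$, let $W$ denote the connected component of $C_f$ containing $x$, so $f\equiv c$ on $W$ for some constant $c$, and the task is to show $c=v_0$. Openness of $C_f$ forces $\partial W\subset \Omega\setminus C_f$. When $W\supset \Delta$, one reads off $c=v_0$ directly from $f=v_0$ on $AB\subset \Delta$. Otherwise, the connectedness of $\Delta$ combined with $W\cap \Delta\ni x$ and $\Delta\not\subset W$ produces, via a standard sup argument along a path in $\Delta$ from $x$ to any point of $\Delta\setminus W$, a point $y\in \partial W\cap \Delta$. By construction $y\notin C_f$, so the first case applies to $y$ and yields $f(y)=v_0$; continuity of $f$ together with $y\in \overline W$ gives $f(y)=c$, whence $c=v_0$.

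The main obstacle, although elementary, is the careful bookkeeping in the first case: checking that each degenerate placement of $L_x$ relative to $\Delta$ (tangencies, vertex crossings, or coincidence with the support line of $BC$) is genuinely covered by the pigeonhole, and that the connected component $l_x\subset L_x\cap \Omega$ — which may extend slightly beyond $\Delta$ thanks to the openness of $\Omega$ — always reaches a point of $AB\cup AC$ on which $f$ is already known to equal $v_0$.
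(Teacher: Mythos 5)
Your proof is correct and follows essentially the same route as the paper's: for a point outside $C_f$ the constancy chord through it must meet one of the two edges on which $f$ is already known, and for a point in $C_f$ you pass to a boundary point of the local constancy region inside $\Delta$ (the paper walks along the segment toward the shared vertex, you use the full component of $C_f$ and a path argument) and conclude by continuity plus the first case. The extra bookkeeping on degenerate positions of the line only makes explicit what the paper leaves implicit.
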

 
 \begin{proof}
Let  $x,y,z$ be the vertices of $\Delta$ and assume  that $f$ is constant on $[x,y]$ and $[x,z]$.   For any point $w\in  \Delta \setminus C_f$, $l_w$ must cross one of the two segments $[x,y]$ or $[x,z]$. This implies $f(w) =f(x)$.  
  If $w\in \Delta\cap C_f$,  and if the segment $[w,x] \subset C_f$, then once again $f(w)=f(x)$. Finally, if  $[w,x] \not\subset C_f$,  let  $\tilde w$ be the  closest element to $w$ on  the segment $[w,x]$  not in $C_{f}$.  We already proved that  $f(\tilde w) = f(x)$. But since $\tilde w$ is in the closure of the connected component of $w$ in $C_{f}$, and $f$ is continuous, we have also $f(w) = f(\tilde w) = f(x)$. We conclude that $f|_\Delta  \equiv f(x)$.
\end{proof}
 
 We now make a crucial observation on the constancy directions $L_x$. 

 \begin{corollary}\label{dirunq2}
 Let $f$ be continuous and satisfy condition {\rm (a)}  in Proposition \ref{loc=glo}.  If $x\in \Omega\setminus C_f$,  there exists only one line $L$ for which  $f$ is constant on the connected component $l$ of $x$ in $L\cap \Omega$. 
 \end{corollary}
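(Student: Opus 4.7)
My plan is to argue by contradiction using Lemma \ref{segs}. Suppose, toward a contradiction, that there exist two distinct lines $L_1 \neq L_2$ passing through $x \in \Omega \setminus C_f$, each with its associated segment $l_i$ (the connected component of $x$ in $L_i \cap \Omega$) on which $f$ is constant; since both $l_i$ contain $x$, the common value on both segments is forced to be $f(x)$.

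The core of the argument is to exhibit a full neighborhood of $x$ on which $f \equiv f(x)$, thereby contradicting $x \notin C_f$. First I would select an open disk $B \subset \Omega$ centered at $x$ small enough that each chord $B \cap L_i$ is entirely contained in $l_i$; this is possible because $\Omega$ is open and $l_i$ is the connected component of $x$ in the relatively open set $L_i \cap \Omega$, so $l_i$ is itself a relatively open segment in $L_i$ containing $x$. Next, I pick points $a, b$ on $B \cap L_1$ on opposite sides of $x$ and points $c, d$ on $B \cap L_2$ on opposite sides of $x$. Since $L_1 \neq L_2$, the four points $a, c, b, d$ (in this cyclic order around $x$) form a convex quadrilateral whose interior contains $x$, and the four closed triangles with vertex $x$ and opposite sides $[a,c], [c,b], [b,d], [d,a]$ are all contained in $B \subset \Omega$ by convexity, with union covering a whole neighborhood of $x$.

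Each of these four triangles has two edges emanating from $x$ which are subsegments of $l_1$ or $l_2$; on each such edge $f \equiv f(x)$. Applying Lemma \ref{segs} to each triangle yields $f \equiv f(x)$ on the whole union, and hence on a genuine neighborhood of $x$, the desired contradiction. The only step requiring any real care is the elementary verification that the four triangles actually tile an open neighborhood of $x$; this follows from the fact that $a, b$ strictly separate $x$ along $L_1$ and $c, d$ strictly separate $x$ along $L_2$, so that $x$ lies in the topological interior of their convex hull. Everything else is a direct application of the preceding lemma.
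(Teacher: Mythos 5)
Your proof is correct and follows essentially the same route as the paper: argue by contradiction, cut out four closed triangles around $x$ with two edges along the two constancy segments, and apply Lemma \ref{segs} to each to conclude $f\equiv f(x)$ on a neighborhood of $x$, contradicting $x\notin C_f$. The only cosmetic difference is that the paper takes the four outer vertices on a circle $\partial B(x,\rho)$ with $B(x,2\rho)\subset\Omega$, while you take arbitrary points on the chords on opposite sides of $x$; both yield the same covering argument.
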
  
 \begin{proof}
 
 Let $x\in\Omega \setminus C_f$  and assume  that for two lines $L \neq \Lambda$, $f$ is constant on the connected components $l$ and $\lambda$ of $x$ in respectively $L\cap \Omega$ and $\Lambda \cap \Omega$. Choose a disk $B(x,\rho) \subset B(x, 2\rho) \subset \Omega$ and note that $\partial B(x,\rho)$ intersects $l$ and $\lambda$ on 4 points $x_1, x_2 \in l$ and $\chi_1, \chi_2 \in \lambda$. Now  the assumptions of Lemma \ref{segs} is satisfied for 
the four closed triangular domains with vertices $x, x_i, \chi_j$, $i,j\in \{1,2\}$. This implies that $x\in C_f$, which contradicts the first assumption on $x$.  
 \end{proof}
 
As a consequence, when $f$ is continuous, for each $x\in \Omega \setminus C_f$, there is only one choice for $L_x$ and there is no ambiguity in the notation. 
We further observe:

 \begin{lemma}\label{dirunq}
  Let the continuous mapping $f:\Omega \to \R^m$  satisfy condition {\rm (a)}  in Proposition \ref{loc=glo}.  Then for all $x\in \Omega\setminus C_{f}$, $l_x \subset   \Omega\setminus C_{f}$. Moreover, for all $y, z\in \Omega \setminus C_{f}$, $l_y \cap l_z = \emptyset$, or $l_y = l_z$.
 \end{lemma}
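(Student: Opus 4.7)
I would argue both parts by reducing to Lemma \ref{segs}. For the first assertion I argue by contradiction: suppose $y \in l_x \cap C_f$. Since $y \in l_x$, the value $c := f(x) = f(y)$ is shared, and since $y \in C_f$ there is a ball $B(y,\delta) \subset \Omega$ on which $f \equiv c$. Setting coordinates so that $L_x$ is the horizontal axis, $x = (0,0)$, and $y = (y_1, 0)$ with $y_1 > 0$, the crucial observation is that $\Omega$ being open forces $l_x$ to extend past $x$ in the direction opposite to $y$, which lets me pick a point $A = (a, 0) \in l_x$ with $a < 0$. I then form the triangle $T'$ with vertices $A$, $B = (y_1, \delta/2)$ and $C = (y_1, -\delta/2)$, and split it along the cevian $[A, y]$ into two sub-triangles $\{A, y, B\}$ and $\{A, y, C\}$. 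Each sub-triangle has two edges on which $f \equiv c$: the cevian $[A, y]$ lies in $l_x$, while $[y, B]$ and $[y, C]$ lie in $B(y, \delta)$. Invoking Lemma \ref{segs} on each sub-triangle yields $f \equiv c$ on $T'$, and a short barycentric computation shows that $x = (0, 0)$ lies in the interior of $T'$; this forces $x \in C_f$, contradicting the hypothesis. The fact that $T' \subset \Omega$ for $\delta$ small is immediate from the compactness of $[A, y] \subset l_x$ inside the open set $\Omega$.

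For the second assertion, suppose $p \in l_y \cap l_z$. By the first part applied at $y$ (or at $z$) we have $p \notin C_f$. The segment $l_y$ is a connected subset of $L_y \cap \Omega$ containing $p$, so $l_y$ is the connected component of $p$ in $L_y \cap \Omega$; likewise $l_z$ is the connected component of $p$ in $L_z \cap \Omega$. Thus through $p \in \Omega \setminus C_f$ there would pass two lines on whose $\Omega$-components $f$ is constant, and the uniqueness in Corollary \ref{dirunq2} forces $L_y = L_z$; consequently $l_y = l_z$ as the common connected component of $p$ in $L_y \cap \Omega$.

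The main obstacle lies in the first part: the naive attempt of taking a triangle with $x$ itself as a vertex (using only $y$ together with points transverse to $L_x$ in $B(y, \delta)$) only places $x$ at a corner of the triangle, so that Lemma \ref{segs} produces constancy of $f$ on a wedge abutting $x$ but not on a full neighborhood of $x$, and the contradiction fails. The idea that bypasses this difficulty is to slide the vertex of the triangle to a point $A$ on $l_x$ lying on the side of $x$ opposite to $y$, which turns $x$ into an interior point of the cevian $[A, y]$ and hence of the triangle $T'$; constancy of $f$ on a genuine neighborhood of $x$ then follows.
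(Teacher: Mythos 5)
Your proof is correct, but it follows a genuinely different route from the paper's, so a comparison is worth recording. For the first assertion the paper argues by a limiting argument: since $x\notin C_f$, there are points $x_k\notin C_f$ converging to $x$ with $f(x_k)\neq f(x)$; their constancy segments $l_{x_k}$ are then disjoint from $l_x$, hence converge locally uniformly to $l_x$, and for large $k$ they enter the neighborhood of $y$ on which $f$ is constant, forcing $f(x_k)=f(y)=f(x)$, a contradiction. You avoid this convergence-of-lines step entirely: because $\Omega$ is open, $l_x$ extends past $x$ away from $y$, so you can place $x$ strictly inside the triangle $T'$ with vertices $A$, $B$, $C$; each of the two sub-triangles cut by the cevian $[A,y]$ has two constancy edges (a piece of $l_x$ and a vertical segment inside $B(y,\delta)$), so two applications of Lemma \ref{segs} give $f\equiv f(x)$ on $T'$ and hence $x\in C_f$. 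This is more elementary and reduces everything to Lemma \ref{segs}, at the modest cost of the bookkeeping that $T'\subset\Omega$ for $\delta$ small and that $x$ is an interior point of $T'$, both of which you handle correctly (and both of which are needed, since the naive triangle with $x$ as a vertex would indeed only give constancy on a wedge). For the second assertion the paper repeats the four-triangle argument of Corollary \ref{dirunq2} around the intersection point and then contradicts the first assertion, whereas you note that the intersection point $p$ lies outside $C_f$ by part one and invoke directly the uniqueness of the constancy line through $p$ from Corollary \ref{dirunq2}; since that corollary precedes the lemma and its proof does not use it, there is no circularity, and your version is shorter. One cosmetic remark: the case $y=x$ should be dismissed in half a sentence (it would put $x\in C_f$ outright) before normalizing $y=(y_1,0)$ with $y_1>0$.
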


\begin{proof}
For showing the first conclusion,  we argue by contradiction: Let $y \in C_{f} \cap l_x$ 
 and let $V$ be a an open neighborhood of $y$ in $\Omega$ on which $f$ is constant.    
 We observe that for $\delta >0$ small enough  $B(x,\delta) \setminus l_x$ cannot entirely lie in $C_{f}$, since otherwise, 
$B(x,\delta) \setminus l_x$  having only two connected components on both sides of $l_x$, and $f$ being continuous, the value of $f$ would be constant on  $B(x,\delta)$, contradicting  $ x \notin C_{f}$.  Therefore, there is a sequence $x_k \in \Omega \setminus C_{f}$ converging to $x$ such that  $f(x_k) \neq f(x)$.  Since the value of $f$ on $x_k$ and $x$ differ, $l_{x_k} \cap l_x =\emptyset$ for all $k$. As $x_k\to x$, we deduce that  $l_{x_k}$ must locally 
uniformly converge to $l_x$. This implies that $l_{x_k} \cap V$ is non-empty for $k$ large enough and hence $f (x_k) = f(y) = f(x)$, which is a contradiction.  
 
To prove the second statement, also assume by contradiction that $l_y \neq l_z$ and that $x\in l_y \cap l_z  \neq \emptyset$. Choose a disk $B(x,\rho) \subset B(x, 2\rho) \subset \Omega$ and note that $\partial B(x,\rho)$ intersects $l_y$ and $l_z$ on 4 points $y_1, y_2 \in l_y$ and $z_1, z_2 \in l_z$. Now  the assumptions of Lemma \ref{segs} is satisfied for 
the four closed triangular domains with vertices $x, y_i, z_j$, $i,j\in \{1,2\}$. This implies that $x\in C_f$, which contradicts the first statement, as $y\notin C_f$ but $x\in l_y \cap C_f$.  
 \end{proof}

We are now ready to prove (b) assuming (a). We first note that the condition (b)  is obvious if $x\in C_f$. If $x\notin C_f$, without loss of generality,  and through rotations, dilations and translations, we can assume that $x= (0,0)$ and that $l_x$ is parallel to  the horizontal axis.  We show that we can find $\delta^-, \delta^+> 0$ such that the condition (b) holds true  
for $B^+(x,\delta^+)$ and $B^-(x, \delta^-)$ in the upper and lower planes, where the open half disks are defined by 
$$
B^\pm (x,\delta):= B(x, \delta) \cap \{ (s,t ) \in \R^2; \,\, \pm t  >0\}. 
$$ Then we can choose $B= B(x, \min\{\delta^+, \delta^-\})$ and the proof is complete.  

Without loss of generality we concentrate on the upper half plane. 
If there exists $\delta^+ >0$ such that $B(x,\delta^+) \subset C_f$,  the conclusion is obvious: We choose $L_y$ always parallel to the horizontal axis.  Otherwise there exists a sequence $x_k \notin C_f$ in the upper open half plane converging to $x$.  Note that by Lemma  \ref{dirunq},  any two (possibly unbounded) segments $l_y, l_z$ do not intersect within $\Omega$ unless they are the same. This implies that  $l_{x_k}\cap l_x = \emptyset$ and that the $l_{x_k}$ must converge locally uniformly to $l_x$.  We choose  $\rho>0$ such that the closed rectangular 
box 
$$
S_\rho^+ := \{ (s,t)\in \R^2; \,\, |s| \le \rho,  0\le t \le \rho\}
$$
is a subset of  $\Omega$. We let $x^{\pm}_0:= (\pm\rho, 0)$, $x^\pm = (\pm \rho, \rho)$ and choose $k$ large enough such that for $y=x_k$,  $l_y= l_{x_k}$ intersects the two  segments $[x_0^\pm, x^\pm]$ in their respective interior points $y^-$ and $y^+$. The closure of the convex open quadrilateral $P$ created by the four vertices $x^-_0, x^+_0, y^+, y^-$ lies in $S^+_\rho$ 
and we have that   $[y^-, y^+] \subset l_y$ and $[x^-_0, x^+_0] \subset l_{x}$.

We claim that the condition (b) is valid for $P$ (standing for $B_x$), which completes the proof since there exists some $\delta^+>0$, such 
that $B^+(x, \delta^+)\subset P$. For any $z\in P$, if $z\not \in C_f$, then we can choose $\tilde l_z = l_z \cap P$, which lies between $\tilde l_{x_0}$ and $\tilde l_y$. If, on the other hand, $z\in C_f$,  let the points $z_1 \in  [y^-, y^+]$ and $z_0 \in [x^-_0, x^+_0]$ be those points on $\partial P$ 
which lie vertically above and below $z$. Since $z_0, z_1 \notin C_f$, we can choose the two closest elements of $P\setminus C_f$ to $z$ on the segment $[z_0, z_1]$, which we name respectively by $\tilde z_0, \tilde z_1$. We observe that  $l_{\tilde z_0}$ and $l_{\tilde z_1}$ cannot intersect $l_x$ and $l_y$, which contain the upper and lower boundaries of $P$, and hence will intersect  the two vertical boundaries of $P$ between  $x^\pm_0$ and $y^\pm$ in $\tilde z^\pm_i$, $i=0,1$.  
We claim that $f$ is constant  in the region $X$ enclosed between $l_{\tilde z_0}$ (which is  above $\tilde l_x$ and below $l_{\tilde z_1}$) and $l_{\tilde z_1}$ (which is above $\tilde l_{\tilde z_0}$ and below $\tilde l_y$) in $P$. Note that $X$ is the convex quadrilateral created by  $\tilde z^-_0, \tilde z^+_0, \tilde z^+_1, \tilde z^-_1$ and contains $z$ in its interior. 

The latter claim about the constancy of $f$ on $X$, if proven, completes the proof of the former claim regarding $P$, since we are free to choose our non-intersecting constancy segments in this region $X$ (in particular for $z$) in a manner that no such two segments intersect within $P$: If the directions  $l_{\tilde z_0}$ and $l_{\tilde z_1}$ are parallel, choose the line $L_p$ for all points  $p\in X$ to be the line  parallel to them passing from $p$. If, on the other hand, the extensions of $l_{\tilde z_0}$ and $l_{\tilde z_1}$ meet at a point $q$ (outside of $\Omega$),  for all $p\in X$ we choose the line $L_p$ to be the one passing through $p$ and $q$.   If $\tilde l_p$, for any $p\in X$, were to intersect any other  $\tilde l_{\tilde p}$ for $\tilde p\in P \setminus X$, it would have to first intersect one of the  two $\tilde l_{\tilde z_i}$, $i=0,1$, which does not happen by construction. Note that $X$  is  the connected component of $C_f\cap P$ containing $z$. We can therefore  foliate $P$ in constancy segments by exhausting  all the connected components of $C_f \cap P$. 

To prove our final claim, that is the constancy of $f$ on the region $X$, we note that $f$ is constant on the segment  
$\tilde l_{\tilde z_1} =[\tilde z^-_1, \tilde z^+_1]$,  and on the segment $[\tilde z_0, \tilde z_1]$, whose interior lies in $C_f$. Since $\tilde z_1 \in [\tilde z_1^-, \tilde z_1^+]$, by Lemma \ref{segs}, $f$ must be constant in the triangular domain created by  by $\tilde z_0, \tilde z^-_1, \tilde z_1^+$. But then this last assertion implies that $f$ is constant on the segments
$[\tilde z_0,  \tilde z^-_1]$ and $[\tilde z_0, \tilde z^+_1]$, which combined with Lemma \ref{segs} again, using the constancy 
of  $f$ on segments $[\tilde z_0, \tilde z^-_0]$ and $[\tilde z_0, \tilde z^+_0] \subset l_{\tilde z_0}$, proves the claim that $f$ is constant on $X$.
The proof of (a) $\Rightarrow$ (b) is complete.

  \medskip

{(b)} $\Rightarrow$ {(c)}:

 It is logically obvious.

    \medskip
    
{(c)} $\Rightarrow$  {(a)}: 

The proof can be found in the last page of  \cite{lepamonge}. 
We reproduce it here for completeness. Let $x\in \Omega$ and consider a Ball $B_x$ over which the condition (c) is satisfied. In particular, $f$ is constant on $L_x\cap B_x$ for some line $L_x$ passing through $x$.  Now, consider the (possibly infinite) segment
$$
c_x:= \,\, \mbox{The connected component of} \,\, x \,\, \mbox{in} \,\,  \{ y\in L_x \cap \Omega; f(y) = f(x)\}. 
$$ We  claim that $c_x$, which contains the segment $L_x\cap B_x$, is either equal to 
$l_x$, i.e.\@  the connected component of $x$ in  $L_x\cap \Omega$, or $x\in C_f$. This proves  that $f$ is constant either on $l_x$, or in a neighborhood of $x$, and so (a) holds true. 

We assume that  $c_x\neq l_x$, and prove $x\in C_f$. In this case $c_x$ must admit at least one endpoint within $\Omega$. Let $y \in \Omega$ be that endpoint. Consider the open disk $B_y$ centered at $y$ in which all points admit constancy segments within  $B_y$. Let the line $L_y$ passing through $y$ be such that $f$ is constant on the segment $L_y \cap B_y$ and  let $z,w$ be the endpoints of this segment. $L_y$ and $L_x$  cannot be parallel, since then $c_x$ can be extended 
along $L_x = L_y$ to include either $z$ or $w$, which contradicts the fact that it must be a maximal connected 
constancy subset of $l_x$. Therefore, we can choose an element $\tilde y \in B_y \cap c_x$ to form an open  triangle $\Delta$ with vertices $\tilde y, z, w$. $f$ is constant on $[y,z]$ and on $[\tilde y, y]$, where $y\in (z,w)$. 
Since no segment $L_p \cap B_y$ departing from a point $p\in \Delta$  can reach $\partial B_y$ on its both ends without crossing $[w,z]$ or $[\tilde y, y]$, we deduce that $f |_\Delta \equiv f(y) = f(x)$. 

We observe that  $C_f$  contains $\Delta$,  and hence also  the open  segment $(\tilde y, y)$. We have thus found a non-empty interval in $c_x$ from which we can propagate our local constancy property and so reach to the desired  conclusion of $x\in C_f$. We argue by contradiction: If $x\notin C_f$, let $\tilde x$ be the closest point to $y$  on $[x, y]$ which is not in $C_f$. Certainly $C_f \supset (\tilde x , y) \supset (\tilde y ,y)$ and $(x,y) \supset (\tilde x, y)$. Consider once more the open disk $B_{\tilde x}$ according to the condition (c).  
Since $C_f$ is open, for some $\hat y\in (\tilde x, y)$ close enough to $\tilde x$,  $f$ is constant on a segment  $[\hat z, \hat w]\subset B_{\tilde x} \cap C_f$ orthogonal to $c_x$ at $\hat y$, with 
$\hat y \in  (\hat z , \hat w)$.  Also note that for $\hat x \in [x, \hat y)$,  $f$ is constant  on the portion  $[\hat x, \hat y] = [x, \hat y] \cap  B_{\tilde x} \supset [\tilde x, \hat y]$ of the segment $[x, \hat y] \subset c_x$.  Since $f$  takes the value $f(x)$ on both of  $[\hat x, \hat y]$ and $[\hat z, \hat w]$, both within the set $B_{\tilde x}$, a similar argument as above shows that $f$ is constant on 
the open triangle $\Hat \Delta$ with vertices $\hat x, \hat z, \hat w$, and so $\hat \Delta\subset C_f$. But  then $\tilde x \in (\hat x, \hat y) \subset \hat \Delta \subset C_f$, which contradicts its choice. 
  \end{proof}

In view of Proposition \ref{loc=glo},  and as already observed in \cite[Proposition 2.30]{Ki-hab}, we have: 

\begin{corollary}\label{lipvec}   Let $\Omega \subset \R^2$ be a domain and let the continuous mapping $f: \Omega \to  \R^m$  
satisfy any of the equivalent conditions in Proposition \ref{loc=glo}. Then for all $x\in \Omega$, there exists $L>0$,  an open disk $B$   centered at  $x$, and a  unit Lipschitz  vector field $\vec \eta :  B  \to \R^2$ with ${\rm Lip}\,\, \vec \eta \le L$,   for which
 $$
\displaystyle \forall y\in B \quad   \vec \eta (y) = \vec \eta (y+ s \vec \eta (y))  \,\,\, \mbox{{\rm and}}  \,\,\, 
    f (y) = f(y+ s \vec \eta (y)) \,\,\, \mbox{{\rm for all}} \,\, s \,\, \mbox{{\rm for which}}  \,\,\, y+ s  \vec \eta (y) \in B. 
$$   
\end{corollary}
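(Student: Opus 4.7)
The plan is to reduce to two complementary cases depending on whether $x$ belongs to $C_f$, the open set on which $f$ is locally constant, as introduced in the proof of Proposition~\ref{loc=glo}. If $x \in C_f$, some neighborhood $B$ of $x$ is mapped to a single value by $f$, and any constant unit vector field on $B$ satisfies the conclusion with Lipschitz constant zero. The substantive case is $x \notin C_f$, and I focus on that.

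After translation and rotation, assume $x = 0$ and that $l_x$ is horizontal, then invoke the construction carried out in the proof of the implication (a)$\Rightarrow$(b) of Proposition~\ref{loc=glo}. That construction produces an open convex quadrilateral $P \subset \Omega$ containing $x$, together with a foliation $\{\tilde l_p\}_{p \in P}$ of $P$ by pairwise disjoint-or-equal constancy segments, each of which meets the two nearly-vertical sides of $P$. Let $B$ be a small open disk centered at $x$ strictly contained in $P$ and set $\delta := \mathrm{dist}(B, \partial P) > 0$. Define $\vec\eta(y)$ to be the unit vector along $\tilde l_y$, with orientation chosen so that its first coordinate is strictly positive; this is well defined since the slopes of all segments are bounded thanks to the vertical-crossing property.

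For the Lipschitz estimate, pick $y, z \in B$. Either $\tilde l_y = \tilde l_z$ and $\vec\eta(y) = \vec\eta(z)$, or the two segments are disjoint in $P$. In the second case, either the underlying full lines are parallel, so $\vec\eta(y) = \vec\eta(z)$, or they meet at a single point $q \in \R^2 \setminus P$ which, by the disjointness in $P$, satisfies $\mathrm{dist}(q, B) \ge \delta$. An elementary planar estimate then bounds the angle between the two unit tangents by $|y - z|/\delta$, yielding $|\vec\eta(y) - \vec\eta(z)| \le |y-z|/\delta$, so $\vec\eta$ is $L$-Lipschitz with $L := 1/\delta$. The two functional identities claimed in the statement follow directly, since the ray $s \mapsto y + s\vec\eta(y)$ parametrizes a portion of $\tilde l_y$ as long as it remains in $B$, and both $\vec\eta$ and $f$ are constant along $\tilde l_y$.

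The main obstacle that I expect is ensuring that $\vec\eta$ extends consistently and continuously across the points of $C_f \cap B$, where a priori many directions would be admissible for the constancy segment. This is handled by the explicit prescription supplied inside the proof of (a)$\Rightarrow$(b): within each connected component of $C_f \cap P$, that proof chooses the foliating segments to be either all parallel to, or all concurrent with, the two bounding segments coming from $\Omega \setminus C_f$. This matches the directions of the surrounding non-constancy segments by construction, so the disjointness-plus-far-concurrence estimate above applies uniformly on $B$ and no separate continuity argument at $\partial C_f \cap B$ is needed.
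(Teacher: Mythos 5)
Your plan follows the same two-part strategy as the paper's proof: exploit the pairwise disjointness of constancy segments to define a direction field, and derive the Lipschitz bound from the observation that two distinct non-intersecting segments, prolonged to full lines, can only meet far away. But there is a concrete slip in the setup: you take $B$ centered at $x$ and ``strictly contained in $P$,'' where $P$ is the open convex quadrilateral built in the proof of (a)$\Rightarrow$(b). That $P$ lives entirely on one side of $l_x$---its bottom edge $[x_0^-, x_0^+]$ lies on $l_x$ and the opposite vertices $y^\pm$ are strictly above---so $x\in\partial P$ and no disk centered at $x$ can be contained in $P$. (The proof of (a)$\Rightarrow$(b) in fact produces two such regions, one above and one below $l_x$, or half-disks in $C_f$ in the degenerate sub-cases, and the disk realizing condition (b) straddles them all.) The remedy is to work directly with a disk furnished by condition (b), as the paper does: pick $B_x = B(x,2\delta)\subset\Omega$ on which the constancy segments through any two points are disjoint or equal, and set $B = B(x,\delta)$. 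Two distinct lines $L_y, L_z$ with $y,z\in B$ cannot meet inside $B_x$, hence their intersection point, if any, lies at distance $\ge\delta$ from $B$, and your far-concurrence estimate then runs as written.

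A secondary, genuine difference is how the orientation of $\vec\eta$ is pinned down. You prescribe the sign of the first coordinate, which requires the slope bound you extracted from the vertical-crossing structure of $P$; once you replace $P$ by the abstract disk $B_x$, that bound is no longer immediate (it can be recovered from the far-concurrence estimate by shrinking $B$, but it is an extra step). The paper instead lifts the continuous map $\Lambda : B\to\R\mathbb P^1$, sending $p$ to the projective class of $L_p$, through the double cover $\mathbb S^1\to\R\mathbb P^1$ using the simple connectivity of $B$; this dispenses with any slope hypothesis. Both routes work, but the lifting argument is shorter and does not depend on the particular geometry of the region produced in the proof of (a)$\Rightarrow$(b).
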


 \begin{proof}
We choose a disk $B_x = B(x, 2\delta) \subset  \Omega$  according to the condition (b) and we let  $B=B(x, \delta)$.
For any $p\in B$, there exists a line $L_p$ such that $f$ is constant on $\tilde l_p = L_p \cap B_x$, and no two such lines 
meet within $B_x$. We define a the mapping ${\bf \Lambda }: B\to \R\mathbb P^1$ which associates to each point $p\in B$ the element of the real projective line
$\R\mathbb P^1$ determined by the direction of $L_p$, and we note that it is constant along the segments $L_p\cap B$ and continuous.   Since $B$  is simply connected, ${\bf \Lambda }$ can be lifted to a continuous mapping  
$\vec \eta : B \to \mathbb S^1$. By constancy of ${\bf \Lambda }$ along the segments $L_p\cap B$, $\vec \eta$ can only take two distinct values along  them, and so its continuity implies that $\vec \eta$ is constant along these directions, which are now determined by  $\vec \eta$ itself. Since two distinct lines $L_p$ and $L_q$, for $p,q\in B$, do not meet  except possibly at an at least a $\delta$-distance from $\partial B$, we conclude that ${\rm Lip} \,\vec \eta \le 1/\delta$.   
  \end{proof} 
 
\subsection{ Developability through test functions} 
The following lemma will allow us to translate the developability properties of $v\in C^{1}$  
into a property for the distribution  $\nabla^2 v$:

\begin{lemma}\label{weakconst}
 Let $B\subset \R^2$ be an open disk and  $f :B \to \R^m$ be continuous. Also, let    $\vec \eta: B\to \R^2$ be a unit Lipschitz vector field  such that the following property holds true:
  $$
 \displaystyle \forall x\in B \quad   \vec \eta (x) = \vec \eta (x+ s \vec \eta (x))  \,\,\, \mbox{{\rm for all}} \,\, s \,\, \mbox{{\rm for which}}  \,\,\, x+ s  \vec \eta (x) \in 
B. 
$$ Then the following two properties are equivalent: 
 
\begin{itemize} 
\item[(a)] $\displaystyle  \displaystyle \forall x\in B \quad  f (x) = f (x+ s \vec \eta (x)) \,\,\, \mbox{{\rm for all}} \,\, s \,\, \mbox{{\rm for which}} \,\,\, x+ s  \vec \eta (x) \in 
B$, 
\item[(b)]
 $ \displaystyle  \forall \psi\in C^\infty_c(B) \quad \int_B f {\rm div}  (\psi \vec \eta) =0.$ 
\end{itemize} 
\end{lemma}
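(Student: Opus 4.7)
The plan is to straighten out $\vec\eta$ via a Lipschitz change of variables and thereby reduce the statement to the classical fact that a continuous function with vanishing distributional $s$-derivative is independent of $s$. Fix a point $x_0\in B$, choose a Lipschitz curve $\gamma$ through $x_0$ transverse to $\vec\eta(x_0)$, and set
\[
\Phi(s,t):=\gamma(t)+s\,\vec\eta(\gamma(t)).
\]
The hypothesis on $\vec\eta$ says precisely that the flow $\phi_s(x):=x+s\vec\eta(x)$ forms a one-parameter group, namely $\phi_s\circ\phi_{s'}=\phi_{s+s'}$; hence $\Phi(s,t)=\phi_s(\gamma(t))$ is a bilipschitz homeomorphism of a small rectangle onto a neighborhood $U$ of $x_0$, under which integral curves of $\vec\eta$ become horizontal lines $\{t=\mathrm{const}\}$. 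Writing $\tilde f:=f\circ\Phi$ and $\tilde\psi:=\psi\circ\Phi$, property (a) restricted to $U$ becomes the statement that $\tilde f$ is independent of $s$.

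The central technical ingredient is a Liouville-type identity for the Jacobian $J:=|\det D\Phi|$. Differentiating the invariance $\vec\eta(\phi_s(x))=\vec\eta(x)$ in $s$ at $s=0$ gives $D\vec\eta\cdot\vec\eta=0$ almost everywhere, from which $\partial_s D\Phi=D\vec\eta(\Phi)\cdot D\Phi$ a.e., and Jacobi's formula yields
\[
\partial_s J \;=\; J\cdot(\mathrm{div}\,\vec\eta)\circ\Phi \quad \text{almost everywhere}.
\]
Combined with the chain rule $\partial_s\tilde\psi=(\vec\eta\cdot\nabla\psi)\circ\Phi$, this produces the pointwise identity $J\cdot\mathrm{div}(\psi\vec\eta)\circ\Phi=\partial_s(\tilde\psi\,J)$, and a change of variables gives the master formula
\[
\int_B f\,\mathrm{div}(\psi\vec\eta)\,dx \;=\; \int \tilde f(s,t)\,\partial_s\bigl(\tilde\psi(s,t)\,J(s,t)\bigr)\,ds\,dt
\]
for every $\psi$ supported in $U$. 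From this the implication (a)$\Rightarrow$(b) is immediate: if $\tilde f$ depends only on $t$, integrate first in $s$ and use the compact support of $\tilde\psi J$.

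For the converse (b)$\Rightarrow$(a), one first extends (b) from $C^\infty_c(B)$ to Lipschitz test functions with compact support in $B$, by standard mollification (using $\mathrm{div}\,\vec\eta\in L^\infty$ and continuity of $f$). To verify that $\partial_s\tilde f=0$ distributionally on the parameter rectangle, it suffices, given $\tilde g\in C^\infty_c$, to produce Lipschitz $\tilde\psi_\delta$ with $\partial_s(\tilde\psi_\delta J)\to \partial_s\tilde g$ in $L^p$ as $\delta\to 0$. Setting $\tilde\psi_\delta:=\tilde g\cdot(1/J)_\delta$, where $(1/J)_\delta$ is a mollification of the $L^\infty$ function $1/J$, does the job: the Liouville identity implies $\partial_s(1/J)\in L^\infty$, so $\partial_s((1/J)_\delta J)\to 0$ in $L^p$ and the desired convergence follows by the product rule. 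Applying the extended (b) to $\psi_\delta:=\tilde\psi_\delta\circ\Phi^{-1}$ and passing to the limit yields
\[
\int \tilde f\,\partial_s\tilde g\,ds\,dt \;=\; 0 \qquad \forall\, \tilde g\in C^\infty_c,
\]
so $\partial_s\tilde f=0$ distributionally, and continuity of $\tilde f$ promotes this to pointwise independence of $s$. Since $x_0\in B$ was arbitrary and integral curves of $\vec\eta$ are straight segments, (a) on all of $B$ follows by a routine covering argument.

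The main difficulty is the rigorous justification of the Liouville identity $\partial_s J = J\cdot(\mathrm{div}\,\vec\eta)\circ\Phi$: it formally involves second-order information about the merely Lipschitz map $\Phi$, so one must invoke Rademacher's theorem together with the standard flow theory for Lipschitz vector fields to validate it almost everywhere. Everything else — the mollification arguments, the change of variables, and the final covering step — is routine once this key identity and its consequence $\partial_s(1/J)\in L^\infty$ are in hand.
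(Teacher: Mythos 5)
Your proposal is correct and follows essentially the same route as the paper: straighten $\vec\eta$ by the map $\Phi(t,s)=\gamma(t)+s\,\vec\eta(\gamma(t))$, rewrite $\int_B f\,\mathrm{div}(\psi\vec\eta)$ as $\int \tilde f\,\partial_s(\tilde\psi\, J)\,ds\,dt$, integrate in $s$ for (a)$\Rightarrow$(b), and for the converse approximate suitable Lipschitz test functions to deduce that $\tilde f$ is independent of $s$, then patch by a covering argument. The only notable difference is that the paper chooses $\gamma$ as an integral curve of $\vec\eta^{\perp}$, so the Jacobian is explicitly $J=1+sk(t)$ with $\|k\|_\infty\le L$ and the Liouville-type identity you flag as the main difficulty reduces to a one-line computation (indeed, even with your generic transversal, $J$ is affine in $s$, so no genuine Lipschitz flow theory is required).
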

 
\begin{proof}
 The   condition (b)  states that the distributional derivative of  $f$ in direction of the vector field $\vec \eta$ vanishes. The proof shows that a regular enough change of variable reduces the problem to the case when $\vec \eta$ is constant.  
 
 Let $\vec \xi = \vec \eta^\perp$  and $x \in B$.  Choose $T_0>0$ such that $\gamma: (-T_0,T_0) \to B $ is a solution to the ODE 
$ \gamma' (t) = \vec\xi (\gamma(t))$ in $B$ with initial value $\gamma(0) =x$. Let $L = {\rm Lip} \,\vec \eta$,  $k(t)= -\gamma''(t) \cdot \vec \eta(\gamma(t))$ and note that  $\|k\|_{L^\infty(-T_0, T_0)} \le L$. 
It is straightforward to see that there exists $0<t_0\le T_0/2$ such that for all $t, t'\in [-t_0,t_0]$, 
and all $s, s' \in \R$, 
$$
 \gamma(t) + s \vec \eta (\gamma (t)),  \gamma(t') + s' \vec \eta (\gamma (t'))\in B \implies   \gamma(t) + s \vec \eta (\gamma (t)) \neq  \gamma(t') + s' \vec \eta (\gamma (t')), 
$$ unless $t=t'$ and $s=s'$. Indeed, assume by contradiction that there exist sequences $t_k, t'_k\to 0$, $s_k, s'_k$, such that for all $k$, $(t_k, s_k) \neq (t'_k, s'_k)$, and 
$$
 \gamma(t_k) + s_k \vec \eta (\gamma (t_k)) = \gamma(t'_k) + s'_k \vec \eta (\gamma (t'_k)) \in B. 
$$ Note that $t_k = t'_k$ implies $s_k = s'_k$, and hence we must have $t_k \neq t'_k$ for all $k$, implying on its turn that $\gamma(t_k) \neq \gamma(t'_k)$ for all $k$.  On the other hand, by the main property of $\vec \eta$ we obtain
$$
\vec \eta(\gamma (t_k)) = \vec \eta (\gamma (t_k) + s_k \vec \eta (\gamma (t_k)) =  \vec \eta (\gamma (t'_k) + s'_k \vec \eta (\gamma (t'_k)) = 
 \vec \eta(\gamma (t'_k)). 
$$ This yields
$$
\gamma(t_k) - \gamma(t'_k) = (s'_k - s_k) \vec \eta (\gamma (t_k)),  
$$ implying that 
$$
\frac{\gamma(t_k) - \gamma(t'_k)}{{t_k - t'_k} }  \cdot \vec\xi (\gamma (t_k)) =0.  
$$ Passing to the limit we obtain $|\vec \xi(x)|^2 = \gamma'(0) \cdot \vec \xi (\gamma(0)) =0$,  which is a contradiction. 

Since $\gamma([-t_0, t_0])$  is compact in $B$, we choose $0<s_0<\frac1{2L}$ such that the image of  $[-t_0, t_0] \times [-s_0, s_0]$ under   
$$
\Phi(t,s) := \gamma(t) + s \vec \eta (\gamma (t)),  \,\, t\in [-t_0,t_0], s\in [-s_0, s_0],
$$ lies compactly in $B$. We note that $\Phi$ is one-to-one and Lipschitz on $U:= (-t_0,t_0) \times (-s_0, s_0)$ and that   
we have
$$
\det \nabla \Phi (t,s) = 1+ sk(t) \ge \frac 12  \,\,\, \mbox{a.e. in} \,\, U.
 $$  We conclude that $\Phi :U \to \Phi(U)$ is a bilipschitz change of variable.  
 
We let $V_x= \Phi(U)$, which is an open neighborhood of $x$ in $B$.  We calculate for any $\psi\in C^\infty_c(V_x)$:
$$
{\rm div} (\psi \vec \eta) = \partial_{\vec \eta} (\psi \vec \eta) \cdot \vec \eta + \partial_{\vec \xi}(\psi \vec \eta) \cdot \vec \xi
= \nabla \psi\cdot \vec \eta + \psi \partial_{\vec \xi} \vec \eta \cdot \vec \xi, 
$$ which gives for $\tilde \psi = \psi \circ \Phi$:
$$
{\rm div} (\psi \vec \eta)  (\Phi (t,s))= \partial_s \tilde \psi + \tilde \psi \frac{k(t)}{1+sk(t)}. 
$$ Therefore, letting $\tilde f (t,s) = f \circ \Phi(s,t)=  f(\gamma(t) + s \vec \eta (\gamma(t))$ we obtain
\begin{equation}\label{chv}
\begin{aligned}
\int_{V_x} f {\rm div}  (\psi \vec \eta) & = \int_U f{\rm div}(\psi \vec \eta) \circ \Phi (s,t) (1+ s k(t)) \,\,dsdt  \\ & = \int_U \tilde f (s,t) 
\Big (\partial_s \tilde \psi  + \tilde \psi \frac{k(t)}{1+sk(t)}\Big) (1+ sk(t)) \, ds dt 
\\ &= 
\int_U \tilde f(t,s) \Big ( k(t) \tilde \psi  + \partial_s \tilde \psi + s k(t) \partial_s \tilde \psi \Big ) \, dsdt 
\\ & = \int_U \tilde f(t,s) \partial_s ((1+ sk(t))\tilde \psi)\, dsdt.
\end{aligned}
\end{equation} If (a) is satisfied, then $\tilde f(t,s) = \tilde f(t)$ for all values of $s$ for which the function is defined, and hence, since $\tilde \psi$ is compactly supported in $U$, we 
obtain:
\begin{equation}\label{local}
\forall \psi \in C^\infty_c(V_x) \quad \int_{V_x} f {\rm div}  (\psi \vec \eta) =0.
\end{equation}   

Now for  $\psi \in C^\infty_c(B)$, we let $K:= {\rm supp}(\psi) \subset B$. $K$ is compact and so it admits a finite covering of open sets  of the form $V_{x_i}$, for $i=1, \cdots, n$. 
We consider a partition of  unity associated with this covering
$$
\displaystyle \theta_i \in C^\infty_c(V_{x_i}), \quad \sum_{i=1}^n \theta_i =1 \,\, \mbox{on}\,\, K,
$$ and we conclude with (b) by \eqref{local}:
$$
\int_{B} f {\rm div}  (\psi \vec \eta) = \sum_{i=1}^n \int_B f {\rm div} (\theta_i \psi \vec \eta) = \sum_{i=1}^n \int_{V_{x_i}} f {\rm div} (\theta_i \psi \vec \eta) =0.
$$

To prove the converse, assume (b) and let  
 $\varphi \in L^1 (U)$,  be such that 
\begin{equation}\label{sav0}
\int_{-s_0}^{s_0} \varphi(t,s)\,ds =0,  
 \end{equation} and define 
$$ \displaystyle \phi(t,s'):=\frac{1}{1+ s'k(t)} \int_{-s_0}^{s'} \varphi (t,s) \, ds.$$  We can construct 
  a sequence  $\psi_k \in C^{0,1}_c(V_x)$ such that 
    $$
  \psi_k \circ \Phi \to \phi \quad \mbox{in} \,\, L^1 (U) \quad \mbox {and}\quad \partial_s (\psi_k \circ \Phi) \to \partial_s \phi  \quad \mbox{in} \,\, L^1 (U). 
  $$
Therefore by (b)
$$
\int_{V_x} f {\rm div}  (\psi \vec \eta) =0 \,\, \forall \psi\in C^\infty_c(V_x) \implies 
\int_{V_x} f {\rm div} (\psi_k \vec \eta) =0,
$$ which in view of \eqref{chv}    implies
$$
\int_U \tilde f  \varphi  \, ds dt = \int_U \tilde f (t,s) \partial_s ((1+ sk(t) ) \phi)  \, ds dt = \lim_{k\to \infty} \int_U \tilde f (t,s) \partial_s \Big ((1+ sk(t))  \psi_k \circ \Phi \Big )  \, ds dt = 0.
$$ This fact being true for all $\varphi \in L^1(U)$ satisfying \eqref{sav0} implies that 
$$  \forall t\in (-t_0, t_0)\quad \forall s \in (-s_0, s_0)  \quad \tilde f(t, s) = \fint_{-s_0}^{s_0} \tilde f(t,s) \, ds. $$ This means that for $|s|<s_0$:
$$
f(x+ s\vec \eta (x)) = \tilde f(0, s) = \tilde  f(0,0) = f(x). 
$$  The global property (a) is  a direct consequence of this local property
around each $x\in B$, and the fact that $\vec \eta$ itself is constant on  $x+ s\vec \eta (x)\in B$ for all $s$.
\end{proof}

 \section{A H\"older continuity toolbox}\label{toolbox}
 
 \subsection{Notations}  
For any open set $U \subset \R^n$, sufficiently differentiable function  $f:U \to \R$, and nonnegative integer $j$ we will denote by $[\cdot]_{j;U}$
the supremum norm of its $j$th (multi-index) derivatives over $U$ and by 
$$
\|f\|_{k; U}:=  \sum_{j=0}^k [f]_{j;U}
$$  its $C^k$ norm.   For $0<\alpha \le 1$, the corresponding H\"older seminorms and norms  are identified  by the standard  conventions
 $$[f]_{k,\alpha; U}:=  \sup_{\begin{array}{c} x,y \in  U\\ x\neq y \end{array}}   \frac{|D^kf(x)-D^k f(y)|}{|x-y|^\alpha}, 
 $$
$$ \|f\|_{k,\alpha; U}:= \|f\|_{k; U} + [f]_{k,\alpha;U}.
$$   The modulus of continuity of $f: U \to \R$ is defined by
 $$
\omega_{f;U}(r):=  \sup_{\begin{array}{c} x,y \in  U \\  0< |x-y| \le r \end{array}}   |f(x)-f(y)|. 
$$ We also introduce the $\alpha$-H\"older modulus of $f$ for $0<\alpha<1$: 
$$
[f]_{0,\alpha;U|r}:=  \sup_{\begin{array}{c} x,y \in  U \\  0< |x-y| \le r \end{array}}   \frac{|f(x)-f(y)|}{|x-y|^\alpha}. 
$$
We will drop the subscript $U$ when denoting these quantities for the specific case of an open set named $U$, hence 
$\|\cdot\|_{\cdot}=  \|\cdot\|_{\cdot;U}$, etc.

For a fixed mollifying kernel $\phi \in C^\infty_c(B(0,1))$ with $\int\phi=1$, and $x\in \R^n$, we denote   
the standard convolution of a given mapping  $f$ with $\phi$ over the  length scale $\e$ by 
$$f_\e(x):= f\ast \phi_\e (x)= \int_{\R^n} f(x-y) \phi_\e(y) dy,$$
where for all $x$
$$
\phi_\e(x):= \e^{-n} \phi(\frac x\e).
$$ Naturally $f_\e(x)$ is defined for $\e$ small enough provided $f$ is integrable in some neighborhood of $x$. 

Throughout this article, the universally bounded constant $c=O(1)$ might change but is independent of all the data, unless specified by an argument.  We will also use the usual little-$o$ convention, i.e.\@ for any two functions $b, \tilde b  : [0, \e_0) \to \R^+$, we have:
$$ 
\tilde b (\e) \,\, \mbox{is}\,\,  o(b(\e)) \Longleftrightarrow    \forall \e \quad \tilde b (\e)  \le c b(\e) \quad \mbox {and} \quad \lim_{\e\to 0^+} \frac{\tilde b(\e)}{b(\e)} =0.  
$$ 

We will repeatedly refer to the following convolution estimates  in our arguments:
 
\begin{lemma}\label{estimcvls}
Let $V\subset U\subset \R^n$ be open sets such that ${\rm dist}{(\overline V, \partial U)} >0$. Assume that $f,h: U\to \R$ are locally integrable.    Then for  any $0<\alpha<1$ and  $\e < {\rm dist} (\overline V, \partial U)$:
\medskip
\begin{itemize}
\item[(i)] $\|f_\e - f\|_{0;V} \le  c[f]_{0,\alpha|\e} \e^\alpha$,
\medskip
\item[(ii)] $\|\nabla f_\e\|_{0;V} \le c[f]_{0,\alpha|\e} \e^{\alpha-1}$,
\medskip
\item[(iii)] $\|f_\e h_\e - (fh)_\e\|_{1;V} \le c[f]_{0,\alpha|\e} [h]_{0,\alpha|\e}\e^{2\alpha-1}$.
\end{itemize}
\end{lemma}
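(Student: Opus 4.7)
The plan is to derive all three estimates from two elementary properties of the mollifier: $\int \phi_\e = 1$ and $\int \nabla \phi_\e = 0$ (the latter because $\phi_\e$ is compactly supported in $B_\e$), combined with the scale-localized H\"older control $[\cdot]_{0,\alpha|\e}$ on the $\e$-neighborhood of $V$, which is contained in $U$ by the assumption $\e < {\rm dist}(\overline V,\partial U)$.

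For (i), I would use $\int \phi_\e = 1$ to subtract the value $f(x)$ inside the convolution, writing
\begin{equation*}
f_\e(x) - f(x) = \int (f(x-y)-f(x))\phi_\e(y)\,dy,
\end{equation*}
and bound the integrand pointwise by $[f]_{0,\alpha|\e}|y|^\alpha \le [f]_{0,\alpha|\e}\e^\alpha$ on ${\rm supp}\,\phi_\e$. For (ii) the same subtraction trick applies because $\int \nabla \phi_\e = 0$:
\begin{equation*}
\nabla f_\e(x) = \int (f(x-y)-f(x))\nabla\phi_\e(y)\,dy,
\end{equation*}
and now the standard scaling $\|\nabla \phi_\e\|_{L^1}\le c\e^{-1}$ replaces the $L^1$-normalization of $\phi_\e$, producing exactly one extra factor of $\e^{-1}$.

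For (iii), the crux is the Constantin--E--Titi commutator identity
\begin{equation*}
(fh)_\e(x) - f_\e(x) h_\e(x) = \int (f(x-y)-f(x))(h(x-y)-h(x))\phi_\e(y)\,dy - (f_\e(x) - f(x))(h_\e(x) - h(x)),
\end{equation*}
which follows by direct algebraic expansion using $\int \phi_\e = 1$. Bounding the two pieces via (i) yields the sup-norm estimate $c[f]_{0,\alpha|\e}[h]_{0,\alpha|\e}\e^{2\alpha}$. For the gradient I would repeat the same algebra after moving $\partial_i$ onto $\phi_\e$ and invoking $\int \partial_i \phi_\e = 0$, arriving at
\begin{equation*}
\partial_i[(fh)_\e - f_\e h_\e](x) = \int (f(x-y)-f(x))(h(x-y)-h(x))\partial_i\phi_\e(y)\,dy + (f(x)-f_\e(x))\partial_i h_\e(x) + (h(x)-h_\e(x))\partial_i f_\e(x),
\end{equation*}
each of whose three terms is $O([f]_{0,\alpha|\e}[h]_{0,\alpha|\e}\e^{2\alpha-1})$: the first because $\int |y|^{2\alpha}|\partial_i\phi_\e(y)|\,dy\le c\e^{2\alpha-1}$, and the two cross terms by combining (i) and (ii). Since $\e^{2\alpha-1}$ dominates $\e^{2\alpha}$ for $\e\le 1$, the full $C^1$ bound claimed in (iii) follows at once.

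The computation is entirely routine; there is no real analytical obstacle. The only place where a little care is needed is in (iii), where one must observe that differentiating the mollified product and pushing the derivative onto $\phi_\e$ preserves both H\"older gains $\e^\alpha$ (one from each factor of $f$ and $h$) while costing only a single power of $\e$, this being the mechanism by which the exponent $2\alpha-1$ (rather than, say, $\alpha - 1$) is obtained.
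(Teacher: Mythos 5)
Your proof is correct and follows essentially the same route the paper intends: the paper reduces the lemma to the basic estimate $|f(x)-f(x-y)|\le [f]_{0,\alpha|\e}\e^\alpha$ and refers to \cite[Lemma 1]{CDS}, whose argument is exactly yours — use $\int\phi_\e=1$ and $\int\nabla\phi_\e=0$ to subtract $f(x)$ inside the convolution for (i)--(ii), and the Constantin--E--Titi commutator identity (with the derivative pushed onto $\phi_\e$) for (iii). Your identity and the resulting bounds $o$f order $\e^{2\alpha}$ and $\e^{2\alpha-1}$ check out, so nothing further is needed.
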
 \noindent  
Note the basic estimate
$$
\forall x \in V \quad \forall y\in B(0,\e) \quad  |f(x) - f(x-y)| \le \omega_{f;U}(\e) \le   [f]_{0,\alpha|\e} \e^\alpha.
$$  Based on this estimate, the proof follows the same lines as in \cite[Lemma 1]{CDS} and is left to the reader.  The commutator estimate Lemma \ref{estimcvls}-(iii)  will be  crucial for our analysis. It will be used to prove the quadratic estimate \eqref{gest} 
(\cite[Proposition 1]{CDS}) for the pull-back metric of mollified isometric immersions. It  is also a necessary ingredient of the proof of Theorem \ref{mp23dvp} through establishing the degree formulas \eqref{degree} for $v\in c^{1,2/3}$, following \cite{lepamonge}.

 \subsection{Little H\"older spaces}
 
Our goal here is to define and characterize the elements of the little H\"older spaces $c^{1,\alpha}$.   We denote by $C^k(U)$ the functions which are  continuously differentiable up to the $k$th order in $U$, and by $C^k(\overline U)$  the functions whose derivatives up to order $k$ in $U$ exist and admit a continuous extension to $\overline U$. For any open bounded weakly Lipschitz domain  $U\subset \R^n$, we recall the definition of the H\"older space    
$$
C^{k,\alpha}(\overline U):= \{f\in C^k(U);\, \|f\|_{k,\alpha;U} < \infty\}.
$$     By uniform continuity of derivatives, all the derivatives of $f$ up to the order $k$ have continuous extensions to $\overline U$ and so $ C^{k, \alpha} (\overline U) \subset C^k(\overline U)$.   For an arbitrary open set $\Omega \subset \R^n$  we say $f\in C^{k,\alpha}(\Omega)$ if and only if for any point  $x\in \Omega$, $f\in C^{k,\alpha}(\overline U)$ for some open neighborhood   $U\subset \Omega$ of $x$. 
\begin{remark}
 We have restricted our attention to weakly Lipschitz bounded domains $U$ in order to insure the inclusion $C^1(\overline U) \subset C^{0,\alpha}(\overline U)$ and to avoid unnecessary technical complications in what follows. 
 \end{remark}

 \begin{definition}\label{deflit}
 Let $0<\alpha< 1$ and $U\subset \R^n$ be an open bounded weakly Lipschitz domain. The little H\"older space $c^{0,\alpha}(\overline U)$ is the closure of $C^1(\overline U)$ with respect to  $\|\cdot\|_{0,\alpha;U}$ in $C^{0,\alpha}(\overline U)$.  For any open set $\Omega \subset \R^n$,  we say $f\in c^{0,\alpha}(\Omega)$ if and only if for any point  $x\in \Omega$, $f\in c^{0,\alpha}(\overline U)$ for an open neighborhood   $U\subset \Omega$ of $x$.  For $k\in \mathbb N$, the space $c^{k,\alpha}(\overline U)$ (resp.\@ $c^{k,\alpha} (\Omega)$)   is defined to be the set of all functions $f \in C^k(U)$ (resp.\@ $C^k(\Omega)$)  for which  the components of $D^k f$ belong to $c^{0,\alpha}(\overline U)$ (resp.\@ $c^{0,\alpha}(\Omega)$).
\end{definition}

 \begin{remark}
Under the above regularity assumptions on $U$, it can be shown that $c^{k,\alpha}(\overline U)$ is the closure of $C^\infty(\overline U)$ in $C^{k,\alpha}(\overline U)$.  We will not use this fact in our arguments.
 \end{remark}
  
  It is useful to identify the elements of the little H\"older spaces  in terms of their modulus of continuity. In this regard we state  the following  well known equivalence, whose proof is presented for the sake of  completeness (see also Appendix \ref{litmod}).
   
 \begin{proposition}\label{ltl0}  Let  $U$ be a   bounded weakly Lipschitz domain, $0<\alpha< 1$.   Then the following statements are equivalent. 
 \begin{itemize}
 \item[(i)]  $f\in c^{0, \alpha}(\overline U)$.
\item[(ii)]  $f: U\to \R$ admits an extension $\tilde f\in  c^{0,\alpha} (\R^n)$ and $\tilde f_\e|_U \to f$ in $C^{0,\alpha}(\overline U)$. 
\item[(iii)]   $\omega_{f;U}(r)$ is $o(r^\alpha)$.
\item[(iv)] $\displaystyle \lim_{r\to 0}  [f]_{0,\alpha;U|r} =0$.
\end{itemize}
 \end{proposition}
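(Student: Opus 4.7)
The plan is to establish $(\mathrm{iii}) \Leftrightarrow (\mathrm{iv})$, then $(\mathrm{i}) \Rightarrow (\mathrm{iv})$, $(\mathrm{iv}) \Rightarrow (\mathrm{ii})$, and $(\mathrm{ii}) \Rightarrow (\mathrm{i})$, which closes the loop. The bounded Lipschitz assumption on $U$ enters in two essential places: to ensure that every $C^1(\overline U)$ function is globally Lipschitz (via quasiconvexity of Lipschitz domains), and to allow a standard reflection-type extension preserving the local modulus of continuity.

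The equivalence $(\mathrm{iii}) \Leftrightarrow (\mathrm{iv})$ is direct: the two-sided comparison
\[ \omega_{f;U}(r)/r^\alpha \le [f]_{0,\alpha;U|r} \le \sup_{0<s\le r} \omega_{f;U}(s)/s^\alpha \]
reduces both statements to the same $\varepsilon$-$\delta$ condition at $r=0$. For $(\mathrm{i}) \Rightarrow (\mathrm{iv})$, pick $g_n \in C^1(\overline U)$ with $\|g_n - f\|_{0,\alpha;U} \to 0$; each $g_n$ is Lipschitz on the bounded Lipschitz domain $U$, hence $[g_n]_{0,\alpha;U|r} \le \mathrm{Lip}(g_n)\,r^{1-\alpha} \to 0$, and the triangle inequality $[f]_{0,\alpha;U|r} \le [f-g_n]_{0,\alpha;U} + [g_n]_{0,\alpha;U|r}$ yields $(\mathrm{iv})$ upon an $\varepsilon/2$ split (choose $n$ first, then $r$). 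The implication $(\mathrm{ii}) \Rightarrow (\mathrm{i})$ is immediate: each $\tilde f_\varepsilon|_U$ is smooth, in particular in $C^1(\overline U)$, so the hypothesized $C^{0,\alpha}$ convergence realizes $f$ as a limit of $C^1(\overline U)$-functions.

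The substantive step is $(\mathrm{iv}) \Rightarrow (\mathrm{ii})$. I would first produce a compactly supported extension $\tilde f \in C^{0,\alpha}(\R^n)$ via standard reflection across a finite atlas of local Lipschitz charts of $\partial U$, followed by a smooth cutoff. Such a construction yields a local bound $|\tilde f(x) - \tilde f(y)| \le C\,\omega_{f;U}(L|x-y|)$ with $C,L$ depending only on the charts, so the $o(r^\alpha)$ behaviour of $\omega_{f;U}$ transfers to $\omega_{\tilde f;\R^n}$; hence $\tilde f$ also satisfies $(\mathrm{iii})$, and therefore $(\mathrm{iv})$, on $\R^n$. Standard mollifier bounds give $\|\tilde f - \tilde f_\varepsilon\|_0 \le \omega_{\tilde f}(\varepsilon)$, while splitting $|x-z|$ into the regimes $\le \varepsilon$ and $>\varepsilon$ yields
\[ [\tilde f - \tilde f_\varepsilon]_{0,\alpha;\R^n} \le 2\,[\tilde f]_{0,\alpha;\R^n|\varepsilon}, \]
which tends to zero by $(\mathrm{iv})$ applied to $\tilde f$. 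Restricting to $\overline U$ gives $(\mathrm{ii})$.

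The only delicate point I anticipate is the modulus-preserving extension within $(\mathrm{iv}) \Rightarrow (\mathrm{ii})$: one has to be sure that no boundary artifact introduced by the reflection spoils the sub-$r^\alpha$ decay, but this is secured by the finite atlas construction in the bounded Lipschitz setting, where each local reflection is bi-Lipschitz and the partition-of-unity glueing is smooth. Everything else reduces to routine applications of Lemma \ref{estimcvls} and the triangle inequality in $C^{0,\alpha}$.
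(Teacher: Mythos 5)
Your proof is correct and closes the same cycle of implications; the only substantive point of divergence is in the extension step $(\mathrm{iv})\Rightarrow(\mathrm{ii})$. The paper's Proposition \ref{extension} flattens each boundary chart, invokes the Whitney-type formula $\tilde h_i(x)=\sup\{h_i(y)-\omega_i(|x-y|): y\in V_i^+\}$ from Wells--Williams (which requires the local modulus $\omega_i$ to be subadditive --- this is why the localization to convex cylinders is done first), and then glues by a partition of unity. You instead propose a bi-Lipschitz reflection across each local Lipschitz graph. Both routes deliver an extension whose local modulus is controlled by a rescaled $\omega_{f;U}$, and both pick up an extra $O(r)$ term from the smooth partition-of-unity gluing, which is harmless since $r=o(r^\alpha)$ for $\alpha<1$ --- a point you flag only implicitly (``partition-of-unity glueing is smooth''), and which the paper's stated bound $\omega_{\tilde f;\R^n}\le c(U)\omega_{f;U}$ also elides. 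The reflection route is slightly less clean to set up (one must shrink the charts so that the reflected images land inside $U$), while the Whitney route avoids that bookkeeping at the cost of the subadditivity observation; neither difference is mathematically significant. Your remaining steps --- $(\mathrm{iii})\Leftrightarrow(\mathrm{iv})$ from the two-sided comparison of $\omega_{f;U}(r)/r^\alpha$ with $[f]_{0,\alpha;U|r}$, $(\mathrm{i})\Rightarrow(\mathrm{iv})$ via quasiconvexity of Lipschitz domains and $[g_n]_{0,\alpha;U|r}\le \mathrm{Lip}(g_n)\,r^{1-\alpha}$, the mollifier estimate $[\tilde f-\tilde f_\e]_{0,\alpha}\le 2[\tilde f]_{0,\alpha|\e}$, and the trivial $(\mathrm{ii})\Rightarrow(\mathrm{i})$ --- match the paper's reasoning (the mollifier estimate being exactly Lemma \ref{litmolapp}).
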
  
 
 \begin{proof}
 
 {(i)} $\Rightarrow$ {(iv)}:
Assuming $f\in c^{0,\alpha}(\overline U)$, let $f_k\in C^1 (\overline U)$ be a  sequence of functions converging to $f$ in 
 $C^{0,\alpha}(\overline U)$ as $k\to \infty$. We estimate: 
 $$
[f]_{0,\alpha|r}   \le  [f-f_k]_{0,\alpha|r} +  
[f_k]_{0,\alpha|r} \le  [f-f_k]_{0,\alpha}  + c(U)  \|\nabla f_k\|_0 r^{1-\alpha},
$$  and a straightforward argument by the auxiliary argument $k$ implies the required convergence.

{(iv)} $\Rightarrow$ {(iii)}:
This is straightforward since for all $r>0$:
$$
\frac{1}{r^\alpha} \omega_{f;U}(r) \le [f]_{0,\alpha;U|r}.
$$

{(iii)} $\Rightarrow$ {(ii)}:  By the assumption $f$ is uniformly continuous in $U$.  We extend $f$ to $\tilde f: \R^n\to \R$ by applying Proposition \ref{extension}, obtaining that $\omega_{\tilde f;\R^n}(r) \le C \omega_{f;U} (r)$ is $o(r^\alpha)$.
 We  conclude that 
$$
[\tilde f]_{0,\alpha;\R^n|r} = \sup_{0 \le s \le r}   \frac{1}{s^\alpha} \omega_{\tilde f;\R^n} (s)
$$ is $o(1)$ as a function of $r$. Consider any bounded weakly Lipschitz domain $V\subset \R^n$ and note that  $\tilde f|_V \in C^{0,\alpha}(\overline V)$. Applying Lemma \ref{litmolapp} to $V$ implies that  $\tilde f_\e|_V \to \tilde f|_V$ in $C^{0,\alpha}(\overline V)$ as $\e \to 0$ and  thus $\tilde f|_V \in c^{0,\alpha} (\overline V)$. Both conclusions in (ii) follow.   

{(ii)} $\Rightarrow$ {(i)}: This statement trivially follows   from Definition \ref{deflit}.

\end{proof}
  
\begin{corollary}\label{be>al}
Let $0<\alpha<\beta<1$ and let $U\subset \R^n$ be a bounded weakly Lipschitz domain. Then $c^{1,\beta}(\overline U) \subsetneq  C^{1,\beta} (\overline U) \subsetneq c^{1,\alpha}(\overline U)$.
\end{corollary}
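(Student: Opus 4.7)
The plan is to reduce everything to the first-derivative setting and argue at the scalar level, reading off the inclusions from Proposition \ref{ltl0} and verifying strictness with one-variable model functions.

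First, the inclusion $c^{1,\beta}(\overline U) \subseteq C^{1,\beta}(\overline U)$ is immediate from Definition \ref{deflit}, since $c^{0,\beta}(\overline U)$ is introduced as a subset of $C^{0,\beta}(\overline U)$. For the middle inclusion $C^{1,\beta}(\overline U) \subseteq c^{1,\alpha}(\overline U)$, the key estimate is that for every $g\in C^{0,\beta}(\overline U)$ and every pair $x,y$ with $|x-y|\le r$ one has
\[
\frac{|g(x)-g(y)|}{|x-y|^\alpha} = \frac{|g(x)-g(y)|}{|x-y|^\beta}\,|x-y|^{\beta-\alpha} \le [g]_{0,\beta;U}\, r^{\beta-\alpha},
\]
so $[g]_{0,\alpha;U|r}\to 0$ as $r\to 0^+$, and thus $g\in c^{0,\alpha}(\overline U)$ by Proposition \ref{ltl0}-(iv). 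Applying this componentwise to the first derivatives $Df$ of any $f\in C^{1,\beta}(\overline U)$ yields the inclusion.

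To witness the strictness of both inclusions I would use the one-variable model $h_\gamma(t):=\mathrm{sgn}(t)|t|^\gamma$, which is $\gamma$-H\"older on $\R$ but satisfies $|h_\gamma(r)-h_\gamma(0)|=r^\gamma$, so that $\omega_{h_\gamma}(r)$ is not $o(r^\gamma)$. Fixing any interior point $x_0\in U$ together with a smooth cutoff $\chi\in C^\infty_c(U)$ identically equal to $1$ in a neighborhood of $x_0$, I set
\[
f_\gamma(x) := \chi(x)\,\frac{\mathrm{sgn}(x_1-x_{0,1})|x_1-x_{0,1}|^{\gamma+1}}{\gamma+1},
\]
so that $\partial_1 f_\gamma(x) = h_\gamma(x_1-x_{0,1})$ in a neighborhood of $x_0$, while $f_\gamma$ vanishes near $\partial U$. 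Choosing $\gamma=\beta$ produces $f_\beta\in C^{1,\beta}(\overline U)$ whose derivative fails Proposition \ref{ltl0}-(iii) at $x_0$, so $f_\beta\notin c^{1,\beta}(\overline U)$, establishing strictness of the first inclusion. Choosing any intermediate $\gamma\in(\alpha,\beta)$, the first part of the proof yields $f_\gamma\in C^{1,\gamma}(\overline U)\subseteq c^{1,\alpha}(\overline U)$, while the blowup of $r^{\gamma-\beta}$ as $r\to 0^+$ forces $[\partial_1 f_\gamma]_{0,\beta;U}=\infty$, so $f_\gamma\notin C^{1,\beta}(\overline U)$, establishing strictness of the second inclusion.

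There is no serious obstacle here: the statement is a direct unpacking of the characterizations already assembled in Proposition \ref{ltl0}. The only minor technicalities are checking that the cutoff does not interfere with the local model $h_\gamma$ near $x_0$ (automatic since $\chi\equiv 1$ there) and a routine one-variable verification that the antiderivative of $h_\gamma$ is $C^{1,\gamma}$ globally, which is a standard exercise.
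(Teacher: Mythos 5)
Your treatment of the two inclusions is exactly the paper's argument: the first inclusion is definitional, and the second follows by applying Proposition \ref{ltl0} to the first derivatives, since an $O(r^{\beta})$ modulus of continuity is $o(r^{\alpha})$ (equivalently $[g]_{0,\alpha;U|r}\le [g]_{0,\beta;U}\,r^{\beta-\alpha}\to 0$) whenever $\alpha<\beta$. The paper's proof stops there and gives no witnesses for the strictness of either inclusion; your cutoff-times-power examples supply that part correctly, since a function behaving like $|x_1-x_{0,1}|^{\gamma}$ near an interior point is $\gamma$-H\"older with modulus of continuity exactly $r^{\gamma}$ there, hence fails Proposition \ref{ltl0}-(iii) for the exponent $\gamma$ (so $f_\beta\notin c^{1,\beta}(\overline U)$), and for $\gamma<\beta$ has infinite $\beta$-seminorm (so $f_\gamma\notin C^{1,\beta}(\overline U)$ while $f_\gamma\in C^{1,\gamma}(\overline U)\subset c^{1,\alpha}(\overline U)$). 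One cosmetic slip: the derivative of $t\mapsto \mathrm{sgn}(t)|t|^{\gamma+1}/(\gamma+1)$ is $|t|^{\gamma}$, not $\mathrm{sgn}(t)|t|^{\gamma}$ (the signed antiderivative produces the even power and vice versa); either function does the job, so the argument is unaffected.
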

  
 \begin{proof}
The first inclusion is trivial. If $f\in C^{1,\beta} (\overline U)$, then $\omega_{f;U}(r)$ is $O(r^\beta)$ and so the second inclusion follows by Proposition \ref{ltl0}.
 \end{proof}

\subsection{A distributional product and a criteria for H\"older continuity}
  
The following  two  propositions are  practically  known by the community at large. Similar or more general statements in the same spirit have appeared in various contexts, e.g.\@ in the discussion of the Young integral \cite{You} or of the paraproducts \cite[Chapter 4]{RuSi}.  For examples, see \cite[Theorem 2.52]{BCD}  and \cite[Theorem 13.16]{FH}, or compare with  \cite[Theorem 22]{Mar}. Here, we have formulated and proved rather accessible straightforward versions for the little H\"older spaces which are more adapted to our needs. 
 
 \begin{definition}\label{interpol}  Let $U\subset\R^n$ be a bounded weakly Lipschitz domain. We say that a distribution $T\in \mathcal{D}'(U,\R^n)$ satisfies  the  $\alpha$-interpolation property on  $W^{1,1}_0(U)$ whenever  for $C>0$
 $$
\forall \psi \in C^\infty_c (U)  \quad  \forall j\in \{1,\cdots, n\}\quad |T_j [\psi]| \le  C  \|\psi\|^{\alpha}_{L^1}\|\partial_j \psi\|^{1-\alpha} _{L^1}.
$$ We say that $T$ satisfies the fine $\alpha$-interpolation property on  $W^{1,1}_0(U)$ whenever  for $C>0$
 \begin{equation}\label{est2}
\begin{array}{ll}
 \forall \sigma>0 \quad \exists \delta= \delta(\sigma)>0  \quad \forall \psi \in C^\infty_c (U) \quad \forall j\in \{1,\cdots, n\} & \\ & 
 \hspace{-2.2in} \|\psi\|_{L^1} \le \delta \|\partial_j \psi\|_{L^1} \implies \displaystyle
  |T_j [\psi]| \le  C  \|\psi\|^{\alpha}_{L^1}\|\partial_j \psi\|^{1-\alpha} _{L^1} \sigma,
\end{array}
\end{equation}
with $\delta(1)= +\infty $ 
 \end{definition}

\begin{proposition}\label{lem1}
Let  $U\subset \R^n$ be a bounded weakly Lipschitz domain and $V \Subset U$ be an open set compactly contained in $U$.   
Let  $0 < \alpha < 1 $, $f \in C^{0,\alpha} (\overline U)$ and $h\in c^{0,\alpha}(\overline U)$. 
If  $\alpha >1/2$, then as $\e \to 0$,  for  all $j=1,\cdots n$ the sequence $f_\e \partial_j h_\e$ converges  in the sense of distributions 
to a distribution on $V$ which we denote by 
$f\partial_j h$. More precisely, the convergence is in the dual of $W^{1,1}_0(V)$, i.e.\@ if $\psi \in W^{1,1}_0(V)$ 
$$
f \partial_j h [\psi]: = \lim_{\e \to 0} \int_V f_\e \partial_j h_\e \psi(x) dx 
$$ exists. Moreover for all $\psi\in C^\infty_c(V)$
\begin{equation}\label{est1}
\Big |\int_V f_\e \partial_j h_\e  \psi(x) dx  - f\partial_j h[\psi] \Big | \le 
 \|f\|_{0,\alpha}  [h]_{0,\alpha} \Big (o(\e^{2\alpha -1})   \|\psi\|_{L^1}   +   o(\e^\alpha) \|\partial_j \psi\|_{L^1}  \Big ),
\end{equation}  and  $T= (f\partial_1 h, \dots, f\partial_n h)$  satisfies the fine $\alpha$-interpolation property \eqref{est2} on $W^{1,1}_0(U)$ with $C= c(U,V)   \|f\|_{0,\alpha} [h]_{0,\alpha}$.
\end{proposition}
\begin{remark}
If $h$ is merely in $C^{0,\alpha}(\overline U)$, a similar result holds true  with the following estimates for all $\psi \in W^{1,1}_0(U)$:

$$
\Big |\int_V f_\e \partial_j h_\e  \psi(x) dx  - f\partial_j h[\psi] \Big | \le 
c  \|f\|_{0,\alpha}  [h]_{0,\alpha} \Big ( \e^{2\alpha -1}   \|\psi\|_{L^1}   +   \e^\alpha \|\partial_j \psi\|_{L^1}  \Big )
$$   and
$$
   \quad 
  |f \partial_j h [\psi]| \le  C  \|\psi\|^{\alpha}_{L^1}\|\partial_j \psi\|^{1-\alpha} _{L^1},
$$ i.e.\@ $T= (f\partial_1 h, \dots, f\partial_n h)$  satisfies the  $\alpha$-interpolation property on $W^{1,1}_0(U)$.
\end{remark}
\begin{proof}
Let $\psi \in W^{1,1}_0(V)$, and  for $\e< {\rm dist}( \overline V, \partial U)$ we define:
$$
a^{(\e)} := \int_{V} f_\e \partial_j h_\e  \psi(x) dx. 
$$ We first prove that $a^{(\e)}$ is a Cauchy sequence.  For any $0<\e'<\e$ we have
 \begin{equation}\label{cauchy} 
\begin{aligned}
|a^{(\e)} - a^{(\e')}| &  = \Big |  \int_{V} (f_\e \partial_j h_\e - f_{\e'} \partial_j h_{\e'} ) \psi \Big | \\ & 
 \le \Big  |\int_{V} f_\e (\partial_j h_\e - \partial_j h_{\e'} ) \psi  \Big |  +   \Big   |\int_{V} (f_\e - f_{\e'}) \partial_j h_{\e'} \psi  \Big | 
 \\ & =   \Big |- \int_{V} \partial_j f_\e (h_\e - h_{\e'} ) \psi - \int_{V} f_\e (h_\e - h_{\e'} ) \partial_j \psi  \Big | 
 +   \Big  |\int_{V} (f_\e - f_{\e'}) \partial_j h_{\e'} \psi \Big | 
 \\ & \le   \Big | \int_{V} \partial_j f_\e (h_\e - h_{\e'} ) \psi \Big | + \Big |\int_{V} f_\e (h_\e - h_{\e'} ) \partial_j \psi \Big | +  
 \Big |\int_{V} (f_\e - f_{\e'}) \partial_j h_{\e'} \psi \Big | 
 \\ & = I_1 (\e, \e') + I_2 (\e, \e') + I_3(\e, \e')\\
 \end{aligned} 
\end{equation} 
We estimate, using $\e'<\e$: 
$$ 
\begin{aligned} 
I_1 (\e, \e') 
& = \Big |\int_{V} \psi(x) \Big ( \int_{\R^n}  (f(x-y) \partial_j \phi_\e(y)) dy \Big)  \Big (\int_{\R^n} (h(x-z) (\phi_\e(z) - \phi_{\e'}(z)) dz \Big )  dx  \Big | 
\\ &  = \Big |\int_{V} \psi(x)  \int_{\R^n}  (f(x-y) -f(x)) \partial_j \phi_\e(y) dy  \int_{\R^n} (h(x-z) -h(x) ) (\phi_\e(z) - \phi_{\e'}(z)) dz \, dx  \Big | 
\\ & \le  \|\psi\|_{L^1}   (c[f]_{0,\alpha}  \e^{\alpha-1} )(c[h]_{0,\alpha|\e} \e^{\alpha})  \le \|\psi\|_{L^1}   (c[f]_{0,\alpha}  \e^{\alpha-1} )([h]_{0,\alpha} o(\e^{\alpha})) \\ &  \le \|\psi\|_{L^1}  [h]_{0,\alpha} [f]_{0,\alpha} o(\e^{2\alpha-1}),
\end{aligned}   
$$ where we used Lemma \ref{estimcvls}-(i) and Proposition  \ref{ltl0} in the third line.  For $I_3(\e, \e')$, we note that $\e \not\le \e'$ and hence the same estimate as for $I_1$ is not obtained, but through the same calculations as for $I_1$, we obtain
$$
I_3 (\e, \e') \le    \|\psi\|_{L^1} [h]_{0,\alpha} [f]_{0,\alpha} {\e'}^{-1}  o({\e'}^\alpha)    \e^\alpha. 
$$
In the same manner, we can also obtain an estimate for $I_2$:
$$
\begin{aligned}
 I_2 (\e, \e') & =  \Big |\int_{V} f_\e (h_\e - h_{\e'} ) \partial_j \psi \Big | 
 \\ & \le  \|\partial_j \psi\|_{L^1}  \|f\|_{0}  \sup_{x\in {V}} \Big  |\int_{\R^n} h(x-y) (\phi_\e(y)  - \phi_{\e'}(y)) dy \Big |  
 \\ & =  \|\partial_j \psi\|_{L^1}  \|f\|_{0}    \sup_{x\in {V}}  \Big  |\int_{\R^n} (h(x-y) - h(x)) (\phi_\e(y)  - \phi_{\e'}(y)) dy \Big | 
 \\ & \le   \|\partial_j \psi\|_{L^1} \|f\|_{0}   [h]_{0,\alpha} o( \e^{\alpha}). 
\end{aligned} 
$$  
Putting the three estimates together in view of \eqref{cauchy}, we obtain a first crude estimate:
\begin{equation}\label{crude}
 |a^{(\e)} - a^{(\e')}|   \le  [h]_{0,\alpha} \Big ( \Big [ o(\e^{2\alpha-1})  + {\e'}^{-1}  o({\e'}^\alpha)    \e^\alpha\Big ]  [f]_{0, \alpha}   \|\psi\|_{L^1}   + o( \e^{\alpha})   \|f\|_{0}  \|\partial_j \psi\|_{L^1} \Big ).
\end{equation} This estimate is not enough to establish the Cauchy property of the sequence $a^{(\e)}$. Therefore  we  proceed as follows. 
If $\e'/\e \ge 1/2 $,  we set $\rho= \e'/\e$, $m=1$. Otherwise there exists $m \in \mathbb N$, depending on $\e'/\e<1$, such that $1/4 \le \rho=  ({\e'}/{\e})^{\frac 1m} <1/2$. We now write 
$$
 |a^{(\e)} - a^{(\e')}|   \le \sum_{k=1}^m  |a^{(\rho^{{k-1}}\e)} - a^{(\rho^{k}\e)}|. 
$$ We apply \eqref{crude} successively to $ 0 <\rho^{k}\e < \rho^{k-1}\e$ (as the new $\e'$ and $\e$) to obtain
$$
\begin{aligned}
 |a^{(\e)} - a^{(\e')}|   & \le    [h]_{0,\alpha} \Big (o(\e^{2\alpha-1})    [f]_{0, \alpha}  \|\psi\|_{L^1} (1+ \rho^{\alpha-1}  )  \sum_{k=1}^m  \rho^{{(k-1)}(2\alpha -1)}  \\ & \hspace{0.5in} + o( \e^{\alpha})  \|f\|_{0}   \|\partial_j \psi\|_{L^1} \sum_{k=1}^m  \rho^{(k-1)\alpha}   \Big ) 
 \\ &    \le      (\rho^{\alpha-1}  + 1 )  
 \frac{1- \rho^{(2\alpha-1)m}}{1- \rho^{2\alpha-1}}o(\e^{2\alpha-1})  [h]_{0,\alpha}    [f]_{0, \alpha}   \|\psi\|_{L^1} 
  \\ &   +  \frac{1- \rho^{\alpha m}}{1- \rho^{\alpha}}o( \e^{\alpha})   [h]_{0,\alpha}   \|f\|_{0} \|\partial_j \psi\|_{L^1}    
 \\ &  \le    o(\e^{2\alpha-1})  [h]_{0,\alpha}   [f]_{0, \alpha}  \|\psi\|_{L^1}     + o( \e^{\alpha}) [h]_{0,\alpha}    \|f\|_{0}   \|\partial_j \psi\|_{L^1},
\end{aligned} 
$$ since by our choice  either $1/2 \le \rho <1$ and $m=1$, or $1/4\le \rho <1/2$, independent of $\e', \e$. This implies that $a^{(\e)}$ is Cauchy sequence. Hence the limit  
$$
f \partial_j h [\psi]:= a = \lim_{\e \to 0}   a^{(\e)},
$$ exists.  
  
Now, we estimate for a fixed $\e>0$, as $\e'\to 0 $:
$$
\Big |\int_{V} f_\e \partial_j h_\e \psi(x) dx  - f\partial_j h[\psi] \Big | = |a^{(\e)} -a| \le 
\|\psi\|_{L^1} [f]_{0,\alpha} [h]_{0,\alpha}  o(\e^{2\alpha -1}) +  \|\partial_j \psi\|_{L^1}  \|f\|_{0} [h]_{0,\alpha} o(\e^\alpha),
$$ which establishes \eqref{est1}. To obtain \eqref{est2}, applying Lemma \ref{estimcvls}-(ii) and Proposition  \ref{ltl0} to $\partial_j h_\e$ we observe that:
$$
\begin{aligned} 
|f\partial_j h [\psi]| & \le |a^{(\e)}| + |a^{(\e)} - a| \le |a^{(\e)}| +  [f]_{0,\alpha} [h]_{0,\alpha} \|\psi\|_{L^1} o( \e^{2\alpha -1} )  + 
    \|f\|_{0} [h]_{0,\alpha} \|\partial_j \psi\|_{L^1}  o(\e^\alpha)    
\\ & \le    \|f\|_0   [h]_{0,\alpha}   \|\psi\|_{L^1}  o(\e^{\alpha-1})  + 
  [f]_{0,\alpha} [h]_{0,\alpha} \|\psi\|_{L^1}  o(  \e^{2\alpha -1}) +    \|f\|_{0} [h]_{0,\alpha}  \|\partial_j \psi\|_{L^1}  o(  \e^\alpha)  
\\ & \le   \|f\|_{0,\alpha} [h]_{0,\alpha}  \Big (\|\psi\|_{L^1} o(\e^{\alpha-1}) +   \|\partial_j \psi\|_{L^1} o(\e^{\alpha}) \Big )  
\end{aligned}   
$$ In other words,  
\begin{equation}\label{epsilon} 
\hspace{-0.05in}\begin{array}{ll}
\mbox{for any}\,\, \sigma >0,  \mbox{there exists}\,\, \delta_0= \delta_0(\sigma)>0\,\, \mbox{such that} & \\  
\e \le \min\{\delta_0, {\rm dist} (\overline V, \partial U)\} \implies |f\partial_j h [\psi]| \le  c \|f\|_{0,\alpha} [h]_{0,\alpha} \Big (   \|\psi\|_{L^1} \e^{\alpha-1}+   \|\partial_j \psi\|_{L^1} \e^\alpha \Big )   \sigma, &
\end{array}
\end{equation} with $\delta_0 (1) = +\infty$. 
 
We conclude the proof of \eqref{est2}:   Given, $\sigma >0$, we let  
$$
\displaystyle \delta = \Big (1+\frac{{\rm diam} (V)}{ {\rm dist} (\overline V, \partial U)}\Big )  \delta_0.
$$
Assume $\|\psi\|_{L^1} \le \delta \|\partial_j \psi\|_{L^1}$ for a given nonzero $\psi$, then  letting
 $$ 
 \e = \Big (1+\frac{{\rm diam} (V)}{ {\rm dist} (\overline V, \partial U)}\Big )^{-1} \frac{\|\psi\|_{L^1}}{\|\partial_j \psi\|_{L^1}},  
$$ we obtain $\e <  {\rm dist} (\overline V, \partial U)$ by the Poincar\'e inequality 
$
\|\psi\|_{L^1} \le {\rm diam}(V) \|\partial_j \psi\|_{L^1}
$ on $V$.  On the other hand we also have $\e \le \delta_0$ and therefore applying \eqref{epsilon} we obtain  as required:
$$
|f\partial_j h [\psi]| \le c   \Big (1+\frac{ {\rm diam} (V)}{{\rm dist} (\overline V, \partial U)}\Big )^{1-\alpha} \|f\|_{0,\alpha} [h]_{0,\alpha}  \|\psi\|^\alpha_{L^1}{\|\partial_j \psi\|^{1-\alpha}_{L^1}} \sigma.
$$\end{proof}  
 
\begin{definition}
We say $U \subset\R^n$ is an open coordinate rectangular box when it is a rectangular cuboid with edges parallel to the coordinate axes, i.e.\@
$$\displaystyle U=\prod_{k=1}^n I_k$$ for $I_k$ an open interval in $\R$.   
 \end{definition}

\begin{proposition}\label{C-alpha-control} Assume  that $U \subset\R^n$ is an open  coordinate rectangular box and that for $f\in L^1(U)$, $T=\nabla f$ satisfies the fine  $\alpha$-interpolation property on $W^{1,1}_0(U)$ according to Definition \ref{interpol} with $C=1$. 
Then 
$$
[f]_{0,\alpha;U|\delta(\sigma)} \le 2^{n+1} \sigma.
$$ In particular, by Proposition \ref{ltl0}, $f\in c^{0,\alpha} (\overline U)$ and  
$
[f]_{0,\alpha;U} \le 2^{n+1}.   
$ 
 
\end{proposition}

\begin{proof} 

The proof of the proposition follows classical ideas relating the decay of mean oscillations to pointwise behavior of functions, as pioneered by Morrey and Campanato, see for instance \cite{C63}. We prove the statement by induction over the dimension. 

First let $n=1$ and $U \subset \R$ and be an open interval and let $I\subset U$ be any open subinterval.  
Note that for any $\phi\in C^\infty_c (I)$,  we can construct a sequence $\tilde \phi_k \in C^\infty_c(I)$ such that  $\|\tilde \phi_k\|_{0}\le 1+ 2 \|\phi\|_{0}$, and $\tilde \phi_k$ converges
strongly in $L^1$  to $ \phi - \fint_I \phi$. Now fix $\varphi \in C^\infty_c(I)$ with $\fint_I \varphi =1$ and let 
$$
\phi_k := \tilde \phi_k - \Big (\fint_I \tilde \phi_k \Big ) \varphi \xrightarrow{L^1}  \phi - \fint_I \phi.
$$ By the dominated convergence theorem we have
$$
\lim_{k\to \infty}  \int_I f\phi_k =  \int_I f (\phi -\fint_I \phi). 
$$ Let $\psi_k (t) = \int_{-\infty}^t \phi_k(s) ds$ and note that 
$\psi_k \in C^\infty_c(I)$. We have therefore for all $\phi\in C^\infty_c(I)$ and $\sigma >0$, provided $|I|\le \delta (\sigma)$: 
$$
\begin{aligned}
\Big |\int_I (f - \fint_I f) \phi \Big | & = \Big | \int_I f (\phi -\fint_I \phi) \Big|  = 
\lim_{k\to \infty} \Big| \int_I f\phi_k \Big|  =   \lim_{k\to \infty} \Big | \int_I f\psi'_k\Big |  =  \lim_{k\to \infty} \Big |T [\psi_k]\Big | \\ &  \le 
 \liminf_{k\to \infty}  \|\psi_k\|_{L^1}^{\alpha}    \|\psi'_k\|^{1-\alpha} _{L^1} \sigma
 \le \liminf_{k\to \infty}    \|\psi'_k\|_{L^1}    |I|^\alpha \sigma  \\ & = 
   \liminf_{k\to \infty} \|\phi_k - \fint_I \phi_k\|_{L^1}  |I|^\alpha \sigma =  \|\phi - \fint_I \phi\|_{L^1}  |I|^\alpha\sigma
     \le 2\|\phi\|_{L^1}  |I|^\alpha \sigma,
\end{aligned} 
$$ where we used the Poincar\'e inequality on $I$.   Since $(L^1)' = L^\infty$, and $C^\infty_c(I)$ is dense in $L^1(I)$, we conclude that
\begin{equation}\label{bound}
\forall \sigma>0 \quad \quad   |I| \le \delta(\sigma) \implies \|f- \fint_I f\|_{L^\infty(I)} \le  2 |I|^\alpha \sigma.
 \end{equation} Now, let $y\in U$, and let $ d= {\rm dist}(y, \partial U)$, and for  $z\in (y-d/2, y+d/2)$ and $r<d/2$ define
 $$
 h_r(z):= \fint_{z-r}^{z+r} f(x) dx. 
 $$  $h_r$ is continuous in $z$ and converges a.e. to $f$ on  $(y-d/2, y+d/2)$ as $r\to 0$. On the other hand, for all $r'<r<d/2$:
  $$
\begin{aligned}
\displaystyle \Big | h_r (z) -  h_{r'}(z)\Big | & \le \fint_{(z-r, z+r) \times (z-r', z+r') } |f(x) - f(x')| dx dx'  \le  4|(z-r, z+r)|^\alpha = 2^{\alpha+2} r^\alpha, 
\end{aligned}
$$ where we applied the bound \eqref{bound} to $I=(z-r, z+r)$, $\sigma=1$, $\delta(1)=+\infty$.  As a consequence, $h_r$ is a Cauchy sequence in the uniform norm and $f$ is  continuous 
as the uniform limit of the $h_r$ for $r \to 0$.  Now,  applying once again \eqref{bound} to $I= (x,y) \subset U$ we obtain that
$$
\forall \sigma>0 \quad \forall x,y \in U\quad |x-y| \le \delta(\sigma) \implies  |f(x) - f(y)| \le 4 |x-y|^\alpha \sigma,
$$  which implies $[f]_{0,\alpha;U|\delta(\sigma)} \le 4 \sigma$, as required.  

Now, assume that $n>1$ and that the statement is true for $n-1$. Let $Q\subset U $ be any coordinate rectangular box, i.e.\@ 
$$
Q:=\displaystyle \prod_{k=1}^n I_k   \subset U,
$$ with the open intervals $I_k\subset \R$. 
Let 
$\widehat{x}:= (x_1, \cdots, x_{n-1})$  and note that for all $h\in L^1(Q)$, 
$$
{\rm for \,\, a.e.} \,\, \widehat{x}  \in Q^{n-1} := \prod_{j=1}^{n-1}  I_k ,  \,\, 
\hat h(\widehat{x})  := \fint_{I_n} h(x) dx_n
$$ is well defined and belongs to $L^1(Q)$.  We claim that: 
\begin{equation}\label{boundn}
\forall \sigma>0\quad |I_n| \le \delta(\sigma) \implies \esssup_{x\in Q} \Big |  f(x) -  \hat f(\widehat{x}) \Big |    \le 2 |I_n| ^\alpha \sigma.
\end{equation}

Let us first show that the conclusion holds assuming the claim \eqref{boundn} is true for all  coordinate boxes 
$Q\subset U$.  For $a,b$ the extremities of $I_n$,  choose 
$0 \le \theta_k \le 1$ in $C^\infty_c (I_n)$ such that  $\theta_k \equiv 1$ on $(a+1/k, b-1/k)$.   Given  $\sigma>0$,  for $1 \le j \le n-1$, 
and $\psi \in C^\infty_c (Q^{n-1})$,  assuming that  $\|\psi\|_{L^1(Q^{n-1})} \le \delta(\sigma) \|\partial_j \psi\|_{L^1(Q^{n-1})}$,  we have by our main assumption on $f$:
$$
\begin{aligned}
\Big | \int_{Q^{n-1}} \hat f \partial_j \psi \Big |  & = \frac{1}{|I_n|} \Big | \int_Q f(x) \partial_j \psi(\widehat x) dx \Big |  
= \lim_{k \to \infty}  \frac{1}{|I_n|} \Big | \int_Q f(x) \theta_k (x_n)\partial_j \psi(\widehat{x}) dx \Big | \\ & 
=  \lim_{k \to \infty}  \frac{1}{|I_n|} \Big | \int_Q f  \partial_j (\theta_k \psi) \Big | 
\le  \lim_{k \to \infty}   \frac{1}{|I_n|} \|\theta_k \psi\|^\alpha_{L^1(Q)} \|\partial_j(\theta_k  \psi)\|^{1-\alpha}_{L^1(Q)}  \sigma
\\ & =  \lim_{k \to \infty}   \frac{1}{|I_n|} \|\theta_k \psi\|^\alpha_{L^1(Q)} \| \theta_k \partial_j\psi \|^{1-\alpha}_{L^1(Q)} \sigma
=  \frac{1}{|I_n|}  \|\psi\|^\alpha_{L^1(Q)} \|\partial_j\psi\|^{1-\alpha}_{L^1(Q)}  \sigma
\\ & = \|\psi\|^\alpha_{L^1(Q^{n-1})} \|\partial_j \psi\|^{1-\alpha}_{L^1(Q^{n-1})}  \sigma, 
\end{aligned} 
$$  since $\theta_k \psi \in C^\infty_c(Q)$ and  
$$\|\theta_k \psi\|_{L^1(Q)}  =\Big (\int_a^b \theta_k \Big )  \|\psi\|_{L^1(Q^{n-1})}
\le   \delta(\sigma) \Big (\int_a^b \theta_k \Big ) \|\partial_j \psi\|_{L^1(Q^{n-1})} = \delta(\sigma)  \|\partial_j (\theta_k  \psi)\|_{L^1(Q)}.
$$ Applying the induction assumption, we deduce that
\begin{equation}\label{step}
[\hat f]_{0,\alpha; Q^{n-1}|\delta(\sigma)} \le 2^{n}.
\end{equation} 

To  prove that $f$ is continuous, fix $y\in U$ and consider a box 
$Q_d = Q^{n-1} \times  (y_n-d, y_n+d) \subset U$ containing $y$. 
For all $z\in Q^{n-1} \times (y_n -d/2, y_n+ d/2)$, and $r<d/2$ we  define   
$$
h_r(z):= \fint_{z_n - r}^{z_n+ r} f(\widehat{z}, s) ds  
$$ Note that, if $r$ is fixed, the vertical averages of $f$ are continuous in $\widehat{z}$ as established in \eqref{step}, 
and so $h_r$ is continuous in $z$.  Applying \eqref{boundn} with $\sigma=1$ to   $Q=Q^{n-1} \times (z_n -r, z_n+ r)$ we have for $r'<r<d/2$: 
$$
\begin{aligned}
\displaystyle \Big | h_r (z) -  h_{r'}(z)\Big | & \le \fint_{(z-r, z+r) \times (z-r', z+r') } |f(\widehat{z},s) - f(\widehat{z},s')| ds ds'  \le  4|(z-r, z+r)|^\alpha = 2^{\alpha+2} r^\alpha, 
\end{aligned}
$$ which implies that $h_r$ locally uniformly converge to their limit, which happens to be $f$. Hence $f$ is continuous in 
$U$. 

To obtain a  H\"older estimate, let $x,y\in  U$. First, assume $x_j=y_j$  for some $1 \le j\le n$.  
We re-arrange the coordinates so that  $j=n$ and note that for any sequence of coordinate rectangular boxes 
$Q_k$ of height $1/k$, containing $x$ and $y$, such that $x_n=y_n$ is the midpoint of $I_{n,k}$, we have 
by \eqref{step}
 $$
\forall \sigma>0\quad |x-y| \le \delta(\sigma) \implies |f(x) - f(y)| = \lim_{k\to \infty} |\hat f_{Q_k}(\widehat{x})- \hat f_{Q_k}(\widehat{y})| 
\le 2^{n} |\widehat{x} - \widehat{y}|^\alpha\sigma  \le 2^n |x-y|^\alpha \sigma. 
$$ If, on the other hand, $x_j\neq y_j$ for all $j$, we can  choose a coordinate rectangular box $Q\subset U$ which has $x$ and $y$ as its opposite vertices on the largest diameter. We obtain by applying   \eqref{boundn}  and \eqref{step}
to the now continuous $f$, provided $|x-y| \le \delta(\sigma)$: 
$$
\begin{aligned}
|f(x) - f (y) | & \le |f(x) - \hat f (\widehat{x})| + | \hat f (\widehat{x}) - \hat f (\widehat{y}) |  + |f(y) - \hat f (\widehat{y}) |   \\ & \le  
4 |x_n-y_n|^\alpha \sigma +  2^{n}  |\widehat{x} -\widehat{y}|^\alpha\sigma   \le (4+ 2^n)  |x-y|^\alpha   \sigma \\ & \le   2^{n+1} |x-y|^\alpha \sigma.
\end{aligned} 
$$ In both cases we can conclude with the desired bound, i.e.\@ $[f]_{0,\alpha; U |\delta(\sigma)} \le 2^{n+1}\sigma$.  Note that the sequence $Q_k$ in the first case and the box $Q$ in the second case exist since  $U$ is assumed to be a coordinate box itself.

\medskip 

It remains to prove the claim \eqref{boundn}. Given $\sigma>0$ assume that $|I_n|\le \delta(\sigma)$ as required in \eqref{boundn}.  For any $\phi\in C^\infty_c (Q)$,   consider  a sequence $\tilde \phi_k \in C^\infty_c(Q)$, $\|\tilde \phi_k\|_0 \le 1 + 2\|\phi\|_0$,  converging strongly in $L^1$  to 
$ \phi - \hat \phi(\widehat{x})$.  As a consequence,
$$
\lim_{k\to \infty} \int_{I_n} \tilde \phi_k(x) dx_n  =0 \quad \mbox{in}\,\, L^1(Q^{n-1}). 
$$   Choose a $\varphi \in C^\infty_c(I_n)$ such that $\fint_{I_n} \varphi =1$ and define
$$
\phi_k (x) = \tilde \phi_k(x)  -  \Big (\fint_{I_n} \tilde \phi_k (\widehat{x}, s ) ds \Big )\varphi (x_n).
$$ Then $\phi_k \in C^\infty_c(Q)$ converges in $L^1$ to $\phi - \hat \phi(\widehat{x})$ 
and $\hat \phi_k (\widehat{x}) = \fint_{I_n} \phi_k(x) dx_n =0$.  Letting $\psi_k (\widehat {x}, t) = \int_{-\infty}^t \phi_k(\widehat{x}, s) ds$ we obtain for all $\phi\in C^\infty_c (Q)$, $\psi_k \in C^\infty_c(Q)$. We hence obtain 
$$
\begin{aligned}
\Big |\int_Q (f -  \hat f (\widehat{x})) \phi \Big | & = \Big | \int_Q f (\phi - \hat \phi (\widehat{x}))\Big|  = 
\lim_{k\to \infty} \Big| \int_Q f\phi_k \Big|  =   \lim_{k\to \infty} \Big | \int_Q f \partial_n \psi_k \Big |  = \lim_{k\to \infty} \Big | T_n [\psi_k] \Big | \\ & 
\le  \liminf_{k\to \infty} \|\psi_k\|^\alpha _{L^1} \|\partial_n \psi_k\|^{1-\alpha}_{L^1}\sigma  
 \le \liminf_{k\to \infty}  |I_n|^\alpha  \|\partial_n \psi_k\|^\alpha _{L^1}  \|\partial_n \psi_k\|^{1-\alpha}_{L^1}  \sigma \\ & = 
   \lim_{k\to \infty} |I_n|^\alpha \|\phi_k - \fint_{I_n} \phi_k(x) dx_n \|_{L^1} \sigma = |I_n|^\alpha  
   \|\phi - \fint_{I_n} \phi (x) dx_n\|_{L^1(Q)}\sigma 
   \\ & \le 2 |I_n|^\alpha \|\phi\|_{L^1(Q)}\sigma,
\end{aligned} 
$$ where we used the Poincar\'e inequality 
$$
\|\psi_k\|_{L^1(Q)} \le |I_n| \|\partial _n \psi_k \|_{L^1(Q)} \le \delta(\sigma) \|\partial_n \psi_k \|_{L^1(Q)}
$$ on $I_n$.   Once again, using the fact that $L^\infty$ is the dual  of $L^1$, and the density of $C^\infty_c(Q)$ in $L^1(Q)$, 
we conclude with \eqref{boundn}. 
\end{proof}

 \section{A second fundamental form for $c^{1,\alpha}$ isometries} \label{2ndform}

 Let u$\in c^{1,\alpha}(\Omega, \R^3)$ be an immersion with $2/3 \le \alpha < 1$ and let $V \Subset \Omega$ be 
 any smooth  simply-connected domain compactly contained in $\Omega$, e.g.\@ an open disk. We intend to show the following three statements, which we will formulate more precisely later on.  We shall prove:

\begin{itemize}

\item[1.] A weak notion of the second fundamental form $A_{ij}= -\partial_i u \cdot \partial_j \n$ 
makes sense for the immersion $u$ on $V$.  $A=[A_{ij}]$ is symmetric.  This part can be carried out for $\alpha >1/2$.

\item[2.] If $u$ is isometric, then $A$ is curl free, and is equal to $\nabla^2 v$ for a scalar function $v\in c^{1,\alpha}(V)$. 
\item[3.] $\displaystyle \Det (D^2 v) = -\frac 12 \cc (\nabla v \otimes \nabla v)=0$ in $\mathcal D'(V)$. 
 \end{itemize}

 We choose an open smooth domain $U$ such that $V \Subset U \Subset \Omega$. Let $\phi \in C^\infty_c (B(0,1))$ be a nonnegative 
 function with $\int \phi =1$.   For  $\e < {\rm dist}(\overline V, \partial U)$ we define 
$$
\n_\e:= \n \ast \phi_\e
$$ 
to be the mollification of the unit normal $$\vec n = \frac{\partial_1 u\times \partial_2 u} {|\partial_1 u \times \partial_2 u|}$$ of immersion $u$.  Note that by assumption $C_0:= \|\partial_1 u \times \partial_2 u\|_{0;U}>0$  and $\vec n \in c^{0,\alpha} (\Omega,\R^3)$.   Let  $u_\e := u \ast \phi_\e : V\to \R^3$ be the standard  mollification of $u$ on $V$. If $\e$ is small enough $u_\e$ is an immersion since $\nabla u_\e$ converges uniformly to $\nabla u$ on $V$, and 
$$
|\partial_1 u_\e \times \partial_2 u_\e| \ge  \frac 12 C_0.
$$
We  define 
$$
 N^{\e}:= \frac {\partial_1 u_\e\times \partial_2 u_\e}{|\partial_1 u_\e\times \partial_2 u_\e|}
$$ 
to be the unit normal corresponding to $u_\e$. A direct application of Lemma \ref{estimcvls} yields:
\begin{equation}\label{estims}
\|\partial_i u_\e-  \partial_i u\|_{0;V}  \le   \|u\|_{1,\alpha} o(\e^\alpha), \,\,  \|u_\e\|_{2;V} \le  \|u\|_{1,\alpha} o(\e^{\alpha-1});  \quad \|\vec n_\e - \vec n\|_{0;V} \le  \|u\|^2_{1,\alpha}  o(\e^\alpha). 
\end{equation}  We also  note that
\begin{equation}\label{estimN}
\| N^{\e} - \n\|_{0;V} \le  \frac{1}{C_0} \|u\|^2_{1,\alpha}o(\e^\alpha) ,\,\,   \quad  \|\nabla N^\e\|_{0;V} \le  \frac{1}{C_0} \|u\|^2_{1,\alpha} o(\e^{\alpha-1}).
\end{equation}
To see the first estimate  we observe that 
$$
\begin{aligned} 
\| N^{\e} - \n\|_{0;V} & \le \Big \| \frac {\partial_1 u_\e\times \partial_2 u_\e}{|\partial_1 u_\e\times \partial_2 u_\e|} - \frac{\partial_1 u \times \partial_2 u}{|\partial_1 u \times \partial_2 u |} \Big \|_{0;V}
\\ & \le  \frac{2}{C_0^2} \Big \| |\partial_1 u \times \partial_2 u | (\partial_1 u_\e\times \partial_2 u_\e) - |\partial_1 u_\e\times \partial_2 u_\e| (\partial_1 u \times \partial_2 u) \Big \|_{0;V}
\\ & \le  \frac{2}{C_0}  \Big ( \Big \| \partial_1 u_\e\times \partial_2 u_\e - \partial_1 u \times \partial_2 u \Big \|_{0;V} + \Big \| |\partial_1 u_\e\times \partial_2 u_\e|  -|\partial_1 u \times \partial_2 u|  \Big \|_{0;V} \Big ) \\ & 
\le  \frac{4}{C_0} \Big \| \partial_1 u_\e\times \partial_2 u_\e - \partial_1 u \times \partial_2 u \Big \|_{0;V} \le  \frac{1}{C_0}  \|u\|^2_{1, \alpha} o(\e^\alpha),
\end{aligned}  
$$ where we used \eqref{estims}. The second estimate also follows from \eqref{estims} and a direct calculation.

We finally denote by the symmetric matrix field $A^{\e} :=[A^\e_{ij}]$ the second fundamental form associated with the immersion $u_\e$ in the local coordinates, i.e.\@
\begin{equation}\label{Ae}
A^{\e}_{ij}:= \partial_{ij} u_\e \cdot  N^{\e}=  - \partial_i u_\e \cdot \partial_j  N^{\e},
\end{equation}  which by \eqref{estims} satisfies the obvious bound  
\begin{equation}\label{estimA}
\|A^\e\|_{0;V} \le [u_\e]_{2;V} \le \|u\|_{1, \alpha}o(\e^{\alpha-1}).
\end{equation} 

\subsection{Definition of the second fundamental form $A$}

We are ready now to prove our main claims about the existence  of a second fundamental form for the immersion $u$ and its properties. In what follows the constant $C=c(U, V, \|u\|_{1,\alpha ;U}, C_0)$ might change but it has  a universal upper bound only depending on the stated quantities. 

\begin{proposition}\label{2nd}
If  $\alpha > 1/2$, then $A^{\e}_{ij}$ converges in the sense of distributions to a distribution $A_{ij} \in \mathcal D'(V)$. More precisely,  the convergence is in the dual of $W^{1,1}_0(V)$, and we have:  
$$
\forall \psi\in  C^\infty_c(V) \quad  |(A^{\e}_{ij} - A_{ij})[\psi]| \le C \Big (o(\e^{2\alpha-1}) \|\psi\|_{L^1} + o(\e^\alpha) \|\partial_j \psi\|_{L^1} \Big).
$$ Moreover, for each $i$, the distribution $T=(A_{ij})_{j=1}^n$ satisfies the fine $\alpha$-interpolation  property on $W^{1,1}_0(V)$ according to Definition \ref{interpol}. 
\end{proposition}

\begin{proof}
Applying Proposition \ref{lem1} to the components $f:= \partial_i u^m$, $h:= \vec n^m$, for $m\in \{1,2,3\}$, we deduce that  for a distribution $A_{ij} \in \mathcal{D'}(V)$,
$$
B^{\e}_{ij} := - \partial_i u_\e \cdot \partial_j \vec n_\e \to  A_{ij} := - \partial_i u \cdot \partial_j \vec n   \quad \mbox{in}\,\, \mathcal{D'}(V),
$$   and
$$
|B^{\e}_{ij}[\psi]  - A_{ij} [\psi]| \le C \Big ( o(\e^{2\alpha-1}) \|\psi\|_{L^1}  + o(\e^\alpha) \|\partial_j \psi\|_{L^1} \Big).
$$
 On the other hand, we have for all $\psi \in W^{1,1}_0(V)$:
$$
\begin{aligned}
|A^{\e}_{ij} [\psi] - A_{ij} [\psi]| &  \le |B^{\e}_{ij}[\psi]  - A_{ij} [\psi]| + \Big |\int_B \partial_i u_\e(x) \cdot  (\partial_j {\vec n}_\e (x) - \partial_j N^{\e} (x))\psi(x) dx \Big | 
\\ & \le C \Big ( o(\e^{2\alpha-1}) \|\psi\|_{L^1}  + o(\e^\alpha) \|\partial_j \psi\|_{L^1} \Big) \\ &  +  \Big |\int_V \partial_{ij} u_\e \cdot  ({\vec n}_\e  - N^{\e})\psi dx \Big | + \Big |\int_V \partial_i u_\e \cdot  ({\vec n}_\e - N^{\e}) \partial_j \psi \Big |  \\ & \le  C \Big ( o(\e^{2\alpha-1}) \|\psi\|_{L^1}  + o(\e^\alpha) \|\partial_j \psi\|_{L^1} \Big), 
\end{aligned} 
$$ where we used  Proposition \ref{lem1}, \eqref{estims} and \eqref{estimN}.
The stated fine $\alpha$-interpolation property follows as in Proposition \ref{lem1}. 
\end{proof}

\begin{definition}\label{2nddef}
For  $1/2 < \alpha <1$, we define the weak second fundamental form of an immersion $u\in c^{1,\alpha}(\Omega, \R^3)$ on $V$ as the distribution $$A: C^\infty_c (V) \to \R_{sym}^{2\times 2},   \quad A[\psi]:= [A_{ij}[\psi]]_{2\times 2}.$$  
\end{definition}
 
 \begin{remark}\label{whldmn}
 The definition of $A$  can be  uniquely extended to a distribution globally defined on $\Omega$, i.e.\@ belonging to $\mathcal D'(\Omega, \R_{sym}^{2\times 2})$, but we will not need this fact.
 \end{remark}
\subsection{$A$ satisfies Gauss-Codazzi-Mainardi equations (in a weak sense)}

From now on we assume that  the immersion $u$ is also isometric, i.e.\@  the pull back-metric  $g:= (\nabla u)^T (\nabla u)$ is the Euclidean metric $\E_2$ on $\Omega$. We consider  $\g^{\e}:= (\nabla u_\e)^T \nabla u_\e = u_\e^\ast \E_3$,  the pull-back metric induced by $u_\e$, where $\E_3$ is the Euclidean  metric of $\R^3$.  The  commutator estimate Lemma \ref{estimcvls}-(iii) implies
\bees
\|\g^{\e} - (g \ast \phi_\e)\|_{1;V} = \|(\nabla u_\e)^T \nabla u_\e - ((\nabla u)^T \nabla u)_\e \|_{1;V} \le \|u\|^2_{1,\alpha} o(\e^{2\alpha-1}).  
\eees  
So, in view of the  fact that $\E_2\ast \phi_\e = \E_2$ on $V$ we obtain:
\bee\label{gest}
\|\g^{\e} - \E_2\|_{1;V} \le  \|u\|^2_{1,\alpha}o( \e^{2\alpha-1}).  
\eee  Compare with \cite[Proposition 1]{CDS}, which is stated under more general settings and where
the mollifier needs to be symmetric.

\begin{proposition}\label{curl0} We have the uniform estimate 
\begin{equation}\label{curle0}
\|{\rm curl}\, A^{\e}\|_{0;V} \le C o(\e^{3\alpha-2}).
\end{equation} In particular, if $\alpha\ge 2/3$, then ${\rm curl} \, A =0$  in $\mathcal D'(V)$.
\end{proposition}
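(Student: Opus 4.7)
The plan is to derive the Codazzi--Mainardi identity for the smooth mollified immersion $u_\e$, and then use the estimates \eqref{gest} and \eqref{estimA} together with $\alpha \ge 2/3$ to pass to the limit. Concretely, since $u_\e \in C^\infty(V, \R^3)$ and $N^\e$ is its genuine unit normal, differentiating $A^\e_{ij} = \partial_{ij} u_\e \cdot N^\e$ and using equality of mixed partials of $u_\e$ yields
\begin{equation*}
\partial_k A^\e_{ij} - \partial_j A^\e_{ik} = \partial_{ij} u_\e \cdot \partial_k N^\e - \partial_{ik} u_\e \cdot \partial_j N^\e.
\end{equation*}
Decomposing $\partial_{ij} u_\e$ in the moving frame $\{\partial_1 u_\e, \partial_2 u_\e, N^\e\}$ via the Gauss formula $\partial_{ij} u_\e = (\Gamma^\e)^l_{ij}\, \partial_l u_\e + A^\e_{ij}\, N^\e$, and using the Weingarten identity $\partial_l u_\e \cdot \partial_k N^\e = -A^\e_{lk}$, one obtains the Codazzi--Mainardi equation for $u_\e$:
\begin{equation*}
\partial_k A^\e_{ij} - \partial_j A^\e_{ik} = -(\Gamma^\e)^l_{ij}\, A^\e_{lk} + (\Gamma^\e)^l_{ik}\, A^\e_{lj},
\end{equation*}
where $(\Gamma^\e)^l_{ij}$ are the Christoffel symbols of the induced metric $\g^\e$.

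Next I would control the Christoffel symbols via the isometric hypothesis. Since $(\Gamma^\e)^l_{ij}$ is a linear combination of components of $(\g^\e)^{-1}$ and $\nabla \g^\e$, and the estimate \eqref{gest} gives $\|\g^\e - \E_2\|_{1;V} \le \|u\|_{1,\alpha}^2\, o(\e^{2\alpha-1})$, in particular $\g^\e \to \E_2$ uniformly (so $(\g^\e)^{-1}$ is uniformly bounded for small $\e$) and $\|\nabla \g^\e\|_{0;V} = o(\e^{2\alpha-1})$, we get
\begin{equation*}
\|\Gamma^\e\|_{0;V} \le C\, o(\e^{2\alpha-1}).
\end{equation*}
Combining this with the pointwise bound $\|A^\e\|_{0;V} \le \|u\|_{1,\alpha}\, o(\e^{\alpha-1})$ from \eqref{estimA} and plugging into the Codazzi--Mainardi identity gives precisely
\begin{equation*}
\|{\rm curl}\, A^\e\|_{0;V} \le C\, o(\e^{2\alpha-1})\cdot o(\e^{\alpha-1}) = C\, o(\e^{3\alpha-2}),
\end{equation*}
which is \eqref{curle0}.

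For the second assertion, assume $\alpha \ge 2/3$, so that $3\alpha - 2 \ge 0$ and hence $\|{\rm curl}\, A^\e\|_{0;V} \to 0$ as $\e \to 0$. By Proposition~\ref{2nd}, $A^\e \to A$ in $\mathcal{D}'(V)$, so distributional derivatives pass to the limit: for any $\psi \in C^\infty_c(V)$,
\begin{equation*}
({\rm curl}\, A)[\psi] = \lim_{\e \to 0}\, ({\rm curl}\, A^\e)[\psi] = 0,
\end{equation*}
where the last equality follows from the uniform $C^0$ bound on ${\rm curl}\, A^\e$ tending to zero. Thus ${\rm curl}\, A = 0$ in $\mathcal{D}'(V)$.

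The only real subtlety is the exponent bookkeeping: the threshold $\alpha = 2/3$ arises precisely from pairing the $o(\e^{\alpha-1})$ loss in the second derivatives of $u_\e$ with the quadratic gain $o(\e^{2\alpha-1})$ in the deviation of the pulled-back metric from Euclidean. It is crucial that this second factor comes with exponent $2\alpha - 1$, not $\alpha$; this is what the commutator estimate Lemma \ref{estimcvls}-(iii) buys us via the isometric identity $\g = \E_2$, and it is the genuinely nontrivial input behind the proof.
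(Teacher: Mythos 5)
Your proposal is correct and follows essentially the same route as the paper: bound the Christoffel symbols of $\g^\e$ by $o(\e^{2\alpha-1})$ via \eqref{gest}, pair this with the $o(\e^{\alpha-1})$ bound \eqref{estimA} in the Codazzi--Mainardi equations for the smooth immersions $u_\e$, and pass to the distributional limit using Proposition \ref{2nd}. The only cosmetic difference is that you re-derive the Codazzi--Mainardi identity from the Gauss and Weingarten formulas, whereas the paper simply cites it.
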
 
\begin{proof}
We recall that the Christoffel symbols associated with a metric $g$ are given by
$$
\Gamma^i_{jk} (g) = \frac 12 g^{im} (\partial_k g_{jm} +   \partial_j g_{km} - \partial_m g_{jk}),
$$  where the Einstein summation convention is used.  Hence, in view of \eqref{gest}, we have
$$
\Gamma^{i,\e}_{kj} := \Gamma^i_{jk} (\g^{\e}) \to 0
$$ uniformly, if $\alpha\ge 1/2$, with the estimate
\begin{equation}\label{Cffl}
\|\Gamma^{i,\e}_{kj}\|_{0;V}\le C o(\e^{2\alpha-1}).
\end{equation}
Writing the Codazzi-Mainardi equations \cite[Equation (2.1.6)]{hanhong} for the immersion $u_\e$ we have
$$
\begin{aligned} 
\partial_2 A^{\e}_{11} -\partial_1 A^{\e}_{12}= A^{\e}_{11} \Gamma_{12}^{1, \e}
+A^{\e}_{12} (\Gamma_{12}^{2,\e}-\Gamma_{11}^{1,\e}) -A^{\e}_{22} \Gamma_{11}^{2,\e},\\	
\partial_2 A^{\e}_{12} -\partial_1 A^{\e}_{22}=	A^{\e}_{11}\Gamma_{22}^{1,\e}
+A^{\e}_{12} (\Gamma_{22}^{2,\e}-\Gamma_{12}^{1,\e})-A^{\e}_{22} \Gamma_{12}^{2,\e}.
\end{aligned} 
$$ 
Hence, by \eqref{estimA} and \eqref{Cffl},
\begin{equation*} 
\|\partial_2 A^{\e}_{i1} - \partial_1 A^{\e}_{i2}\|_{0;V} \le C o(\e^{\alpha-1}) o(\e^{2\alpha-1}), 
\end{equation*}  which  completes the proof.
\end{proof} 
 
 Before proceeding to the next step, we need to  establish a further property of the sequence  $A^\e$. Observe that if  $u \in C^2(\Omega, \R^3)$ were an isometric immersion, 
 then the determinant of its second fundamental form $A$ would vanish, since, in this particular case, 
 it would be equal to the Gaussian curvature of the Euclidean metric. In the case considered here, $u$ does not enjoy the sufficient regularity for $\det A$ to be defined. However,  we can show that for $\alpha\ge 1/2$, $\det A^\e$ converges in the sense of  distributions to $0$ as $\e\to 0$, that with a rate which will be crucial in our analysis. 
The following statement is  a slight variant of \cite[Proposition 7]{CDS} recast for our purposes:
\begin{proposition}\label{Adet0}
Let $A^\e$ be as defined in \eqref{Ae}.  Then for all $\psi \in C^\infty_c(V)$, $$\Big | \int_V (\det A^\e) \psi \Big | \le C  \|\psi\|_{W^{1,1}} o(\e^{2\alpha-1}). $$
\end{proposition}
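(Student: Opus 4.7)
The strategy is to exploit the Gauss equation for the smooth immersion $u_\e$: this identifies the extrinsic quantity $\det A^\e$ with the purely intrinsic curvature of the pull-back metric $\g^\e$, which by \eqref{gest} is quantitatively close to the flat metric $\E_2$. Concretely, since $u_\e$ is smooth, the classical Gauss equation reads
$$\det A^\e \;=\; R_{1212}(\g^\e),$$
where $R_{1212}(\g^\e)$ is the $(1,2,1,2)$-component of the Riemann curvature tensor of $\g^\e$. Setting $h:= \g^\e - \E_2$ and using $\cc \E_2 = 0$, $\nabla \E_2 = 0$, a direct expansion of $R_{1212}$ in terms of the metric coefficients and their Christoffel symbols yields a decomposition
$$\det A^\e \;=\; -\tfrac{1}{2}\,\cc h \;+\; Q\big(\nabla h, \nabla h\big),$$
where $Q$ is quadratic in $\nabla h$ with coefficients that are smooth bounded functions of $\g^\e$ and $(\g^\e)^{-1}$; both are uniformly controlled for small $\e$ since $\|\g^\e - \E_2\|_1$ is small by \eqref{gest}.

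Given the decomposition, the estimate follows by treating the two pieces separately against a test function $\psi \in C^\infty_c(V)$. For the linear second-order piece, one integration by parts transfers one derivative onto $\psi$, giving
$$\Big|\tfrac{1}{2}\int_V \cc h \cdot \psi\Big| \;\le\; C\,\|\nabla h\|_{0;V}\,\|\nabla \psi\|_{L^1} \;\le\; C\, o(\e^{2\alpha-1})\,\|\nabla \psi\|_{L^1},$$
where the last step is exactly \eqref{gest}. For the quadratic remainder we use the pointwise bound
$$|Q(\nabla h,\nabla h)| \;\le\; C\,|\nabla h|^2 \;\le\; C\,o(\e^{4\alpha-2}),$$
so that $\big|\int_V Q\,\psi\big| \le C\,o(\e^{4\alpha-2})\,\|\psi\|_{L^1}$. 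The hypothesis $\alpha \ge 1/2$ ensures $4\alpha - 2 \ge 2\alpha - 1$, hence $o(\e^{4\alpha-2}) \le C\,o(\e^{2\alpha-1})$, and summing the two contributions yields the claimed bound $C\,o(\e^{2\alpha-1})\,\|\psi\|_{W^{1,1}}$.

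The only real bookkeeping is verifying the decomposition $R_{1212}(\g^\e) = -\frac{1}{2}\cc h + Q(\nabla h,\nabla h)$ with uniformly bounded coefficients in $Q$; this is a classical computation (the Brioschi-type formula for the Gaussian curvature), and uniform boundedness follows from the fact that $\g^\e$ stays uniformly close to $\E_2$ on $V$. Everything else is a direct application of \eqref{gest}, integration by parts, and the inequality $4\alpha-2\ge 2\alpha-1$ for $\alpha\ge 1/2$. No invocation of the Codazzi-Mainardi equations is needed here; they were used in Proposition~\ref{curl0}, whereas the present statement is the distributional reflection of the Gauss equation together with the vanishing of the curvature of the target flat metric.
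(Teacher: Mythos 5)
Your proof is correct and follows essentially the same route as the paper: both invoke the Gauss equation to write $\det A^\e = R_{1212}(\g^\e)$, split the curvature into a linear second-order expression in the metric (which is integrated by parts once and controlled via \eqref{gest} against $\|\nabla\psi\|_{L^1}$) and a quadratic first-order remainder (bounded in $L^\infty$ by $o(\e^{4\alpha-2})\le o(\e^{2\alpha-1})$ against $\|\psi\|_{L^1}$). The only difference is presentational: you invoke the Brioschi-type decomposition $R_{1212}(\g^\e)=-\tfrac12\cc h + Q(\nabla h,\nabla h)$ abstractly, whereas the paper carries out the equivalent Christoffel-symbol manipulation explicitly via \eqref{Cffl}.
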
 \begin{proof}
In view of the formula for the $(0,4)$-Riemann curvature tensor \cite[Equation (2.1.2)]{hanhong}
$$
R_{iljk} = g_{lm} (\partial_k \Gamma^m_{ij}  -\partial_j \Gamma^m_{ik}   + \Gamma^m_{ks} \Gamma^s_{ij} - \Gamma^m_{js} \Gamma^s_{ik}),  
$$ and by the Gauss equation \cite[Equation (2.1.7)]{hanhong}, \eqref{gest} and \eqref{Cffl} we have on $V$:
$$
\begin{aligned}
\det A^\e & = R_{1212}(\g^\e) = \g^\e_{1m}(\partial_1 \Gamma^{m,\e}_{22} - \partial_2 \Gamma^{m,\e}_{21} + \Gamma^{m,\e}_{1s} \Gamma^{s,\e}_{22}  
- \Gamma^{m,\e}_{2s} \Gamma^{s,\e}_{21})\\ & =
\partial_1 (\g^\e_{1m} \Gamma^{m,\e}_{22} ) - \partial_2 ( \g^\e_{1m} \Gamma^{m,\e}_{21})  + o(\e^{2\alpha-1})  
\\&  =  2 \partial_{12} \g^\e_{12} - \partial_{11} \g^\e_{22} - \partial_{22} \g^\e_{11} + o(\e^{2\alpha-1})  \\ & = - {\rm curl}^T {\rm curl}\,\, \g^\e  + o(\e^{2\alpha-1}).
   \end{aligned} 
$$ The conclusion  follows  by an integration by parts involving the first term and applying \eqref{gest}.  
\end{proof}

\subsection{$A$   as the Hessian of a scalar function}

\begin{proposition}\label{AHess}
Let $\alpha\ge 2/3$, $u\in c^{1,\alpha}(\Omega, \R^3) $ be an isometric immersion and $V$ be a smooth simply-connected domain compactly contained in $\Omega$. There exists $v\in c^{1,\alpha} (V) \cap W^{1,2}(V)$  such that $A$, the 2nd fundamental form of $u$ on $V$, equals $\nabla^2 v$ in the sense of distributions. Moreover
$$
\Det D^2 v  = -\frac 12 {\rm curl^T curl} (\nabla v \otimes \nabla v)=0 \quad \mbox{in} \,\, \mathcal D'(V).
$$
\end{proposition}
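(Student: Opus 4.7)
The plan is to proceed in three steps.

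\textit{Step 1 (Existence of $v$).} Proposition \ref{curl0} guarantees ${\rm curl}\, A = 0$ in $\mathcal{D}'(V)$ (this uses $\alpha\ge 2/3$). Since $V$ is smooth and simply-connected, the distributional Poincar\'e lemma applied first to the closed $1$-forms $(A_{i1}, A_{i2})$ produces $w_i \in \mathcal{D}'(V)$ with $\partial_j w_i = A_{ij}$; the symmetry $A_{12}=A_{21}$ forces $\partial_2 w_1 - \partial_1 w_2 = 0$, so a second Poincar\'e step yields $v \in \mathcal{D}'(V)$, unique up to an affine function, with $\nabla v = w$ and $\nabla^2 v = A$ distributionally.

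\textit{Step 2 (Regularity $v\in c^{1,\alpha}(V)\cap W^{1,2}(V)$).} To bypass having to establish $\nabla v \in L^1_{loc}$ by hand, I would work with the smooth mollifications $v^\eta := v * \phi_\eta$, which satisfy $\nabla^2 v^\eta = A * \phi_\eta$ on slightly smaller domains. Pairing a mollified distribution against a test function gives $(A * \phi_\eta)[\psi] = A[\psi_\eta]$, so the bound (\ref{alpha-1}) for $A$ transfers to $A * \phi_\eta$ with the \emph{same} $\delta(\epsilon)$, using $\|\psi_\eta\|_{L^1}\le \|\psi\|_{L^1}$ and $\|\partial_j \psi_\eta\|_{L^1}\le \|\partial_j \psi\|_{L^1}$. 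Applying Proposition \ref{C-alpha-control} with $f = \partial_i v^\eta$ on each coordinate box $Q\Subset V$ then yields a uniform-in-$\eta$ estimate $[\partial_i v^\eta]_{0,\alpha|\delta(\epsilon);Q}\le 2^{n+1}\epsilon$. Compactness, together with the distributional convergence $\partial_i v^\eta \to \partial_i v$, identifies the limit and gives $\partial_i v \in c^{0,\alpha}(\overline Q)$ via Proposition \ref{ltl0}. Covering $V$ by such boxes yields $v \in c^{1,\alpha}(V)$; running the argument on a slightly larger $V \Subset V' \Subset \Omega$ gives $v \in C^{1,\alpha}(\overline V) \subset W^{1,2}(V)$.

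\textit{Step 3 (Vanishing of $\Det D^2 v$).} With $v^\eta$ as above, smoothness delivers the pointwise identity $\det \nabla^2 v^\eta = -\tfrac12 \cc(\nabla v^\eta \otimes \nabla v^\eta)$. Since $\alpha > 1/2$, Lemma \ref{estimcvls}(iii) applied with $f = h = \partial_i v \in c^{0,\alpha}$ shows $\nabla v^\eta \otimes \nabla v^\eta - (\nabla v \otimes \nabla v) * \phi_\eta \to 0$ in $C^1(V)$; passing $\eta\to 0$ on both sides then yields $\det(A * \phi_\eta) \to \Det D^2 v$ in $\mathcal{D}'(V)$. Independently, Proposition \ref{Adet0} gives $\det A^\eta \to 0$ in $\mathcal{D}'(V)$. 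Identifying the two distributional limits---by expanding the bilinear difference $\det A^\eta - \det(A * \phi_\eta)$ and tracking rates via Proposition \ref{2nd}, (\ref{estims})--(\ref{estimN}), and standard mollification bounds for the $W^{-1,\infty}$ distribution $A$---then delivers $\Det D^2 v = 0$.

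The hard part will be precisely this last identification. Both $A^\eta$ and $A * \phi_\eta$ blow up in $L^\infty$ at rate $O(\eta^{\alpha-1})$, so crude pointwise bounds on the determinant difference are ineffective at the threshold $\alpha = 2/3$. The key will be to re-express differences through pairings against smooth test functions and exploit integration by parts, so that each $A$-type factor acts as a distribution on a controlled smooth function rather than as an unbounded $L^\infty$ multiplier.
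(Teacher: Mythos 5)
Your Steps 1 and 2 take a genuinely different route from the paper (distributional Poincar\'e lemma plus mollification of $v$, instead of the paper's construction of approximations $v^{(\e)}$ from $A^\e$ itself), and they are essentially viable, up to one wrinkle: the constraint $\|\psi\|_{L^1}\le\delta\|\partial_j\psi\|_{L^1}$ does \emph{not} pass to the mollified test function $\psi\ast\phi_\eta$, since mollification may shrink $\|\partial_j\psi\|_{L^1}$ more than $\|\psi\|_{L^1}$, so the transfer of \eqref{alpha-1} to $A\ast\phi_\eta$ "with the same $\delta(\epsilon)$" needs an extra argument (the paper sidesteps this by first getting $\nabla v\in L^2$ from the $L^2$-convergence of $\nabla v^{(\e)}$ and applying Proposition \ref{C-alpha-control} directly to $\partial_i v$). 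The genuine gap is Step 3. You need $\lim_{\eta\to0}\det(A\ast\phi_\eta)=0$ in $\mathcal D'(V)$ and propose to get it by comparing with $\det A^\eta$ (Proposition \ref{Adet0}), but the "identification of the two limits" is exactly the crux and no argument is given; moreover the strategy you sketch fails quantitatively at the threshold. Writing $B_\eta:=A\ast\phi_\eta$, bilinearity of the $2\times2$ determinant gives $\det B_\eta-\det A^\eta=\tfrac12\,{\rm cof}(A^\eta+B_\eta):(B_\eta-A^\eta)$, and the only smallness available for $B_\eta-A^\eta$ is of negative-order type (Proposition \ref{2nd} plus standard mollification bounds): $|(B_\eta-A^\eta)[\varphi]|\le C\big(o(\eta^{2\alpha-1})\|\varphi\|_{L^1}+o(\eta^{\alpha})\|\nabla\varphi\|_{L^1}\big)$. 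Taking $\varphi={\rm cof}(A^\eta+B_\eta)\,\psi$ forces you to pay $\|\nabla A^\eta\|_{0},\ \|\nabla B_\eta\|_{0}=O(\eta^{\alpha-2})$, so the second term is $o(\eta^{2\alpha-2})$, which diverges for every $\alpha<1$; only the first term gives the benign $o(\eta^{3\alpha-2})$. Integrating by parts does not help, because the derivative loss sits on the unbounded cofactor, not on $\psi$.

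The paper is structured precisely to avoid this comparison. It never mollifies $v$ or $A$; instead it decomposes the smooth fields $A^\e$ by solving a Neumann problem for $F^\e$ and a Dirichlet problem for $E^\e$, obtaining $A^\e=\nabla^2 v^{(\e)}-\nabla(E^\e)^\perp+\nabla^\perp E^\e$ with the \emph{strong-norm} smallness $\|\nabla E^\e\|_{W^{1,p}}\le C\,o(\e^{3\alpha-2})$ coming from Proposition \ref{curl0} and elliptic estimates. Then $\det\nabla^2 v^{(\e)}$ is expanded: the pure $A^\e$ term vanishes by Proposition \ref{Adet0}, the cross terms $\int_V A^\e_{ij}(\partial_k E^\e_l)\psi$ are handled by Lemma \ref{strange} (where it is essential that $\partial_k E^\e_l$ is small in $W^{1,1}$, so that the distribution $A$ can act on it), and the quadratic $E^\e$ terms are trivial; the same construction yields $\nabla v^{(\e)}\to\nabla v$ in $L^2$, whence $v\in W^{1,2}(V)$ and the identification $-\tfrac12\cc(\nabla v\otimes\nabla v)=\lim\det\nabla^2 v^{(\e)}$. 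If you wish to keep your Poincar\'e-lemma definition of $v$, you would still have to import this elliptic decomposition (or some equivalent compensated-compactness mechanism) to close Step 3; as written, the proposal defers the essential difficulty rather than resolving it.
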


\begin{remark}
In our setting, it can also be shown that $v\in c^{1,\alpha}(\overline V)$.  In view of Remark \ref{whldmn},  one can find such $v\in c^{1, \alpha}(\Omega)$ provided $\Omega$ is  simply connected. We will not need these facts.
\end{remark}

\begin{proof}
 Let $A^{\e}$ be as defined in \eqref{Ae} and let $F^\e$ be the solution to the Neumann problem 
$$
\left \{ \begin{array}{ll} \Delta F^\e= {\rm div}\, A^{\e} & \mbox{in}\quad V \\ \partial_\nu F^\e =A^\e \cdot \nu  & \mbox{on} \quad \partial V
\end{array} \right.
$$ Since ${\rm div} (\nabla F^\e - A^{\e}) =0$, there exists a vector field $E^\e$ such that $A^\e = \nabla F^\e + \nabla ^\perp E^\e$.  $E^\e$ solves 
$$
\left \{ \begin{array}{ll} \Delta E^\e= {\rm curl} (A^\e - \nabla F^\e) = {\rm curl}\, A^\e 
& \mbox{in}\quad V \\ E^\e = const.  & \mbox{on} \quad \partial V
\end{array} \right.
$$ Since $A^\e$ is symmetric, we have $\partial_2 (F_1^\e+ E_2^\e) = \partial_1 (F_2^\e - E_1^\e)$, and 
thus we derive the existence of a scalar function $v^{(\e)} \in C^\infty(V)$ satisfying
$$
\partial_1 v^{(\e)} = F_1^\e+ E_2^\e,   \quad \partial_2 v^{(\e)} =  F_2^\e - E_1^\e, 
$$ i.e.\@ $\nabla v^{(\e)} = F^\e + (E_1^\e,  -E_2^\e)$. As a consequence
$$
A^\e = \nabla ^2 v^{(\e)} - \nabla  (E^\e)^\perp + \nabla^\perp E^\e. 
$$

Standard elliptic estimates  imply that for any $p<\infty$
\bee\label{errest}
\|\nabla E^\e\|_{W^{1,p}(V)} \le c(p,V) \|{\rm curl}\, A^\e\|_{0;V}. 
\eee 
 As a consequence, fixing $p>2$, and in view of \eqref{curle0}, $\|A^\e - \nabla^2 v^{(\e)}\|_{0;V} \le C o(\e^{3\alpha-2}) \to 0$ and therefore  $\nabla^2 v^{(\e)}$  converges to $A$ in the sense of distributions.
 
 Now, for all $\varphi \in C^\infty _c (V)$, there  exists a vector field   $\Psi $ in $V$, vanishing on $\partial V$, such that  ${\rm div} \, \Psi = \varphi - \fint_V \varphi$, for which 
$\|\Psi\|_{W^{1,2}(V)} \le c(V) \|\varphi\|_{L^2(V)}$ (see e.g.\@ \cite{ASV}).  By adjusting the $\nabla v^{(\e)}$ so that is it is of average 0 over $V$,  and in view of  Proposition \ref{2nd}, we obtain that for all $\e, \e' \le \delta$:
 
 $$
 \begin{aligned}
 \displaystyle \Big | \int_V (\nabla v^{(\e)} - \nabla v^{(\e')})  \varphi  \Big|  & =  \displaystyle \Big | \int_V (\nabla v^{(\e)} - \nabla v^{(\e')})  (\varphi  - \fint_V \varphi)  \Big|  
 = \Big | \int_V (\nabla v^{(\e)} - \nabla v^{(\e')})   {\rm div} \, \Psi  \Big | \\  & = \Big | \int_V (\nabla^2 v^{(\e)} - \nabla^2 v^{(\e')})   \Psi  \Big |  \\ & \le 
 \Big | \int_V (A^\e -A^{\e'})   \Psi  \Big |  + \Big | \int_V (\nabla^2 v^{(\e)} - A^{\e})   \Psi  \Big |    + \Big | \int_V (\nabla^2 v^{(\e')} -A^{\e'})   \Psi  \Big |   
 \\ & \le C \|\Psi\|_{W^{1,1}} o(\delta^{3\alpha-2})   \le C  \|\varphi\|_{L^2(V)}  o(\delta^{3\alpha-2}) . 
 \end{aligned}
 $$ By duality, we conclude that $\nabla v^{(\e)}$ converges strongly in $L^2(V)$ to a vector field $F$ and that $\nabla F = A$. It is now straightforward to see that, 
 if necessary by adjusting the $v^{(\e)}$ by constants, $v^{(\e)}$ converges strongly in 
 $W^{1,2}$ to  some $v\in W^{1,2}(V)$ and that 
 $A= \nabla^2 v$.

To complete the proof, it is only necessary to show that (a) $\Det D^2 v =0$ and (b) $v\in c^{1,\alpha}(V)$.  

\noindent In order to show (a), we first observe that since $\nabla v^{(\e)} \to \nabla v$ in  $L^2(V)$, for all $\psi \in C^\infty_c(V)$ we obtain:
$$
\begin{aligned}
\displaystyle -\frac 12{\rm curl}^T {\rm curl} (\nabla v \otimes \nabla v) [\psi] & = \lim_{\e\to 0} -\frac 12 {\rm curl}^T {\rm curl} (\nabla v^{(\e)} \otimes \nabla v^{(\e)}) [\psi]  = \lim_{\e \to 0}  {\rm det}\nabla^2 v^{(\e)} [\psi] \\ & 
=  \lim_{\e \to 0}   \int_V  {\rm det} (A^\e + \nabla (E^\e)^\perp - \nabla^\perp E^\e ) \psi \\ & = 
\lim_{\e \to 0}    \int _V (\det A^\e) \psi   +  \lim_{\e \to 0}    \sum_{\mbox{some indices } i,j,k,l}  \int_{V} A_{ij}^\e (\partial_k E_l^\e)\psi \\ & +  \lim_{\e \to 0}    \int_V {\rm det} 
(\nabla (E^\e)^\perp - \nabla^\perp E^\e) \psi .
\end{aligned}
$$  The third term obviously converges to $0$, and so does the  first term by Proposition \ref{Adet0}. For the second term, we have, using Proposition \ref{2nd}, \eqref{curle0} and \eqref{errest} 
 \begin{equation*}
 \begin{aligned}
\Big |  \int_V  A_{ij}^\e (\partial_k E_l^\e) \psi \Big | & \le \Big | (A_{ij}^\e - A_{ij}) [(\partial_k E_l^\e) \psi] \Big | + \Big | A [(\partial_k E_l^\e) \psi]  \Big |  \\ & 
\le C   \|(\partial_k E_l^\e) \psi\|_{W^{1,1}}o(\e ^{2\alpha-1}) + C  \|(\partial_k E_l^\e) \psi\|_{W^{1,1}}   \\ &  \le C \|\psi\|_1 o(\e^{2\alpha-1}) o(\e^{3\alpha-2}) + C \|\psi \|_1 o(\e^{3\alpha-2}) \xrightarrow{\e\to 0}0.
\end{aligned}
\end{equation*} This completes the proof of (a).

\medskip 

It remains to prove (b) $v\in c^{1,\alpha}(V)$.  Since $\nabla^2 v =A$ is symmetric in $i,j$, we have $\partial_{ji} v  =  \partial_{ij}v =  A_{ij}$, as distributions.  Proposition \ref{2nd} implies that, for each $i$, $\nabla (\partial_i v)$ satisfies the fine $\alpha$-interpolation inequality on $W^{1,1}_0(V)$. For any $x\in V$, we apply Proposition \ref{C-alpha-control} to a coordinate rectangular box containing $x$ and compactly included in $V$ to 
conclude that $\partial_i v \in c^{0,\alpha} (V)$ for $i=1,2$, which yields   $v\in c^{1,\alpha}(V)$.
\end{proof}

 \section{Developability: A proof  of Theorem \ref{iso23}}\label{thmproof}

Let $u\in c^{1,\alpha}(\Omega, \R^3)$ be an isometric immersion for $2/3 \le \alpha<1$. In order to prove our main Theorem, and in view of Proposition \ref{loc=glo}, it is sufficient to show that  $\nabla u$ satisfies condition (c).  We first fix an open disk  $V$ containing $x$ and compactly contained in  $\Omega$ and note the existence of the function $v\in  c^{1,\alpha}(V) \subset c^{1,2/3}(V) $ as defined in Proposition \ref{AHess}. 
We apply the  key developability result Theorem \ref{mp23dvp} to $v$ to obtain that $\nabla v$ satisfies any of the  equivalent conditions of Proposition \ref{loc=glo} in $V$.  The developability of $u$ is a consequence of Corollory \ref{loc=glo-really} and Proposition \ref{u-dev-v} below. The second conclusion follows from  Corollary \ref {be>al}.

\begin{proposition}\label{u-dev-v}
Let $1/2 < \alpha <1$  and $u\in c^{1,\alpha}(\Omega, \R^3)$ be an   immersion, with the second fundamental form $A$ defined as in  
Proposition \ref{2nd} on $V \Subset \Omega$. Assume moreover that $A= \nabla^2 v$, where $v\in C^1(V)$.   Then $u|_V$ is developable if and only if $v$ is developable.
\end{proposition}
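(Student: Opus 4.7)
The strategy is to translate both developability conditions into distributional statements about $A$ and to identify them via $A = \nabla^2 v$. By Corollary \ref{lipvec}, developability of $u$ (resp.\ of $v$) on $V$ reduces locally, in a neighborhood $B$ of any point outside a locally affine region, to the existence of a Lipschitz unit vector field $\vec{\eta}$ along whose integral curves $\nabla u$ (resp.\ $\nabla v$) is constant. By Lemma \ref{weakconst} applied component-wise, this last property is equivalent to
\begin{equation*}
\int_B \partial_i u \cdot \mathrm{div}(\psi \vec{\eta}) \, dx = 0 \quad (\text{resp.\ with } \partial_i v) \quad \forall \psi \in C_c^\infty(B),\ i=1,2.
\end{equation*}
The proof therefore reduces to showing the equivalence of these two distributional conditions together with identifying the locally affine regions of $u$ and $v$.

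\textbf{Direction ``$u$ developable $\Rightarrow$ $v$ developable''.} If $\nabla u$ is constant along $\vec{\eta}$-curves, then so is the Gauss map $\vec n = (\partial_1 u \times \partial_2 u)/|\partial_1 u \times \partial_2 u|$. Since $A = \nabla^2 v$, an integration by parts (valid against Lipschitz test functions by Proposition \ref{2nd}) gives $\int_B \partial_i v \cdot \mathrm{div}(\psi \vec{\eta})\,dx = -\sum_j \langle A_{ij}, \eta_j \psi \rangle$. I would evaluate the right-hand side by replacing $A$ by its mollified representative $A^{\varepsilon}_{ij} = -\partial_i u_\varepsilon \cdot \partial_j N^\varepsilon$, integrating the $\partial_j$ off $N^\varepsilon$, and then exploiting (i) the orthogonality $\partial_i u \cdot \vec n = 0$ to kill the resulting boundary-type term in the limit, and (ii) the constancy of $\vec n$ along $\vec\eta$-curves, which makes the remaining piece vanish by the weak characterization of Lemma \ref{weakconst} applied to $\vec n$. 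Hence $\nabla v$ is constant along $\vec{\eta}$-curves.

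\textbf{Direction ``$v$ developable $\Rightarrow$ $u$ developable''.} This is where the isometry of $u$ enters essentially (as it does in the application of the proposition to Theorem \ref{iso23} via Proposition \ref{AHess}). From $\nabla v$ constant on $\vec{\eta}$-curves we obtain $\sum_j A_{ij} \eta_j = 0$ distributionally. To transfer this to $\nabla u$, I approximate by $u_\varepsilon$ and use the pointwise Gauss decomposition
\begin{equation*}
\partial_{ij} u_\varepsilon = \Gamma^{k,\varepsilon}_{ij} \partial_k u_\varepsilon + A^{\varepsilon}_{ij} N^\varepsilon,
\end{equation*}
where by (\ref{Cffl}) the Christoffel symbols of the pulled-back metric $\g^\varepsilon$ satisfy $\|\Gamma^{k,\varepsilon}_{ij}\|_{0;V} \le C\, o(\varepsilon^{2\alpha-1})$. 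Integrating by parts turns $\int_B \partial_i u_\varepsilon \cdot \mathrm{div}(\psi \vec{\eta})\,dx$ into $-\int_B \psi \sum_j \eta_j \partial_{ij} u_\varepsilon\,dx$, which splits into a $\Gamma$-term vanishing in the limit thanks to (\ref{Cffl}) combined with the uniform bound $\|\nabla u_\varepsilon\|_{0;V} \le \|u\|_1$, and an $A^\varepsilon \vec{\eta}$-term converging, via the distributional convergence $A^\varepsilon \to A$ of Proposition \ref{2nd} and uniform convergence $N^\varepsilon \to \vec n$, to $-\langle (A\vec{\eta})_i, \psi\, \vec n \rangle = 0$. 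Thus $\nabla u$ is constant along $\vec\eta$-curves. The locally affine regions are handled analogously: if $A \equiv 0$ on an open subset $B'$, a similar mollification argument paired with the orthonormality $\partial_i u \cdot \vec n = 0$ and $|\vec n| = 1$ forces $\vec n$ to be locally constant on $B'$; by the isometry of $u$, the restriction $u|_{B'}$ is a flat isometric map into a plane, hence affine.

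\textbf{Main obstacle.} The delicate step is the limit passage in the Gauss decomposition of $\partial_{ij} u_\varepsilon$: one must verify that the $\Gamma^\varepsilon$-contribution genuinely disappears (using (\ref{Cffl}) together with the defect estimate (\ref{gest})) and that the product $A^\varepsilon\vec{\eta}\cdot N^\varepsilon$ converges to the correct distributional pairing $(A\vec\eta)\cdot\vec n$, despite $A$ being only a distribution and $\vec\eta$ only Lipschitz. Both steps rest on the commutator estimate Lemma \ref{estimcvls}(iii), which is precisely what forces the threshold $\alpha > 1/2$ assumed here.
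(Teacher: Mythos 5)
Your high-level plan correctly isolates the key ingredients (the Gauss decomposition $\partial_{ij}u_\e = \Gamma^{k,\e}_{ij}\partial_k u_\e + A^\e_{ij}N^\e$, the smallness of the Christoffel symbols \eqref{Cffl}, and the weak criterion of Lemma~\ref{weakconst}), but each direction of your argument has a gap that would stop the proof.  In \lq\lq$u$ developable $\Rightarrow$ $v$ developable\rq\rq\ you propose to compute $\langle A_{ij},\eta_j\psi\rangle$ by replacing $A$ by $A^\e_{ij}=-\partial_i u_\e\cdot\partial_j N^\e$ and \lq\lq integrating the $\partial_j$ off $N^\e$.\rq\rq\  However, moving $\partial_j$ off $N^\e$ and onto $\partial_i u_\e$ merely reproduces the Weingarten identity $-\partial_i u_\e\cdot\partial_j N^\e = N^\e\cdot\partial_{ij}u_\e$ (the exact identity $\partial_i u_\e\cdot N^\e\equiv 0$ killing the cross term for \emph{each} $\e$, not only in the limit), so the \lq\lq remaining piece\rq\rq\ is $\int_B A^\e_{ij}\eta_j\psi$ again: the manipulation is circular.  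The productive integration by parts goes the other way: starting from $A^\e_{ij}=N^\e\cdot\partial_{ij} u_\e$, move the outer $\partial_j$ onto $N^{\e,m}\psi\eta_j$ to obtain $\int_B A^\e_{ij}\eta_j\psi = -\sum_m\int_B \partial_i u_\e^m\,{\rm div}(N^{\e,m}\psi\vec\eta)$, and use the constancy of $\nabla u$ (not of $\vec n$) along $\vec\eta$ through Lemma~\ref{weakconst}.

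In \lq\lq$v$ developable $\Rightarrow$ $u$ developable\rq\rq\ you want the $A^\e\vec\eta$-term to converge to $-\langle(A\vec\eta)_i,\psi\vec n\rangle$, but $\psi\vec n$ is only H\"older continuous, not in $W^{1,1}_0(V)$, so it is not an admissible test function for the distribution $A$ and the pairing you write is not defined.  The resolution, which the paper's proof hinges on and which also repairs the first direction, is to \emph{never take the limit $\e\to 0$ inside the test function}: for each fixed $\e$ the product $N^{\e,m}\psi$ lies in $C^\infty_c(B)$, so $A[N^{\e,m}\psi\vec\eta] = -\int_B {\rm div}(N^{\e,m}\psi\vec\eta)\nabla v$ is well defined and, by Lemma~\ref{weakconst} applied to the developable $\nabla v$, equals zero \emph{exactly} for every $\e$; meanwhile $(A^\e-A)[N^{\e,m}\psi\vec\eta]\to 0$ by Proposition~\ref{2nd} together with \eqref{estimN}.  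The paper packages both implications into the single identity
\begin{equation*}
\int_B {\rm div}(\psi\vec\eta)\,\nabla u^m \;=\; \lim_{\e\to 0}\int_B {\rm div}(N^{\e,m}\psi\vec\eta)\,\nabla v,
\end{equation*}
and exploits it by applying Lemma~\ref{weakconst} to either $\nabla v$ or $\nabla u$ with the \emph{smooth} test functions $N^{\e,m}\psi$ for each $\e$ separately, thereby sidestepping any pairing of $A$ with the nonsmooth $\vec n$.
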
 
\begin{proof}
We consider the following  identity \cite[Equation (2.1.3)]{hanhong} (also known as the Gauss equation in the literature), which is valid for the mollified sequence of smooth immersions $u_\e$:
\begin{equation}\label{uij}
\partial_{ij} u_\e= \Gamma^{k,\e}_{ij} \partial_k u_\e + A^\e_{ij} N^{\e}, 
 \end{equation}  where the standard Einstein summation convention is used.  
 
Let $B\subset V$ be a given disk,   $\psi\in C^\infty_c(B)$ and Lipschitz unit vector field $\vec \eta :  B \to \R^2$. 
We denote the coefficients of $u$ and $N^\e$ respectively by $u^m$ and $N^{\e, m}$ and  calculate for $m=1,2,3$,   \begin{equation}\label{sim}
 \begin{aligned}
\int_B {\rm div} (\psi \vec \eta) (\nabla u^m)  & =   \lim_{\e \to 0} \int_B {\rm div} (\psi \vec \eta) (\nabla u^m_\e)    
=  \lim_{\e \to 0}  -\int_B  \psi \vec \eta  \cdot  \nabla^2 u^m_\e  
  \\ &  = \lim_{\e \to 0}    -\int_B  \psi \vec \eta \cdot  \Big (\Gamma^{k,\e} \partial_{k} u^m_\e + A^\e N^{\e,m}  \Big) 
  = \lim_{\e \to 0} - \int_B  \psi \vec \eta \cdot ( A^\e N^{\e,m} ),
 \end{aligned}
 \end{equation}  where we used  \eqref{Cffl}.   We have
  $$
 \begin{aligned}
 \displaystyle \int_B  \psi \vec \eta \cdot \Big (A^\e N^{\e,m} \Big )    & =  A^\e [N^{\e,m} \psi \vec \eta]   =   (A^\e - A)[N^{\e,m} \psi \vec \eta]  + A[N^{\e,m} \psi \vec \eta].
\end{aligned}
 $$  By Proposition \ref{2nd} we have: 
 $$
 \begin{aligned}
 \displaystyle \Big | (A^\e - A)[N^{\e,m} \psi \vec \eta] \Big |   &     \le  C \Big  (    \|N^{\e, m} \psi \vec \eta\|_{L^1}o(\e^{2\alpha-1}) +  
  \|\nabla (N^{\e, m} \psi \vec \eta)\|_{L^1} o(\e^{\alpha})   \Big ) \\ & 
 \le C   \|\psi\vec \eta\|_{L^1}o(\e^{2\alpha-1}) +  \|\nabla (\psi \vec \eta)\|_{L^1} o(\e^{\alpha})   \to 0 \quad \mbox{as} \quad \e \to 0,
 \end{aligned}  $$ where we used \eqref{estimN}. Note also that in view of $A= \nabla^2 v$,  we have for all $\e$, in view of the fact that $N^{\e,m}  \psi  \in C^\infty_c(B)$: 
 $$
 A[N^{\e,m} \psi \vec \eta] = -\int_B {\rm div} (N^{\e,m}  \psi \vec \eta) \nabla v.
 $$  Therefore, \eqref{sim} implies
\begin{equation}\label{u-rel-v}
 \int_B {\rm div} (\psi \vec \eta) \nabla u^m= \lim_{\e\to 0} \int_B {\rm div} (N^{\e,m}  \psi \vec \eta) \nabla v.
 \end{equation}

If $v$ is developable, we apply  Corollary \ref{lipvec}  to $\nabla v$,  and choose accordingly  the open disk $B$ around  $x\in V$  and the Lipschitz unit vector field $\vec \eta : B \to \R^2$. Therefore   
\begin{equation}\label{v-eta}
\displaystyle \forall y\in B\quad   \vec \eta (y) = \vec \eta (y+ s \vec \eta (y))  \,\,\, \mbox{{\rm and}}  \,\,\, 
    \nabla v  (y) = \nabla v (y+ s \vec \eta (y)) \,\,\, \mbox{{\rm for all}} \,\, s \,\, \mbox{{\rm for which}} 
     \,\,\, y+ s  \vec \eta (y) \in B. 
\end{equation}  We apply Lemma \ref{weakconst} to $f=\nabla v$ and $B= B_x$ to obtain for any $\psi\in C^\infty_c(B)$:
 $$
\int_B   \, {\rm div} (N^{\e,m} \psi \vec \eta) \nabla v =0,\quad  m=1,2,3,
$$
where we used  the fact that $ N^{\e,m} \psi \in C^\infty_c(B)$.  By \eqref{u-rel-v} we conclude that   
$$
\int_B {\rm div} (\psi \vec \eta) \nabla u = 0.
$$ Once again applying Lemma \ref{weakconst} and \eqref{v-eta} implies that the Jacobian derivative  $\nabla u$ of the isometric immersion $u$ is constant along the segments generated by the vector field $\vec \eta$ in $B$. Therefore  $\nabla u$    satisfies condition (c) of Proposition \ref{loc=glo}, which implies the developability of $u|_V$.  As already mentioned, this part of the proof concludes the proof of Theorem \ref{iso23} for $u\in c^{1,\alpha}$, $2/3 \le \alpha<1$.

To finish the proof of Proposition \ref{u-dev-v} we need to prove the converse statement. If, on the other hand, $u|_V$ is assumed to be developable, we proceed in a similar manner. According to Corollary \ref{lipvec} and Lemma \ref{weakconst}, for all $x\in V$ there exist a disk $B\subset V$     centered at $x$ and a unit Lipschitz vector field $\vec \eta : B \to \R^2$ such that 
for all $ \psi \in C^\infty_c(B)$:  
\begin{equation*}\label{u-eta}
\displaystyle \forall y\in B\quad  \vec \eta (y) = \vec \eta (y+ s \vec \eta (y))  \,\,\, \mbox{{\rm and}}  \,\,\, 
  \int_B    {\rm div} (N^{\e,m} \psi  \vec \eta)  \nabla  u^m   =0, m=1,2,3.    
\end{equation*} Hence, using the fact that $A_{ij}^\e = \partial_{ij} u_\e \cdot N^\e$ we  deduce for all $\psi \in C^\infty_c(B)$ : 
$$
\begin{aligned}
   \int_B {\rm div} (\psi \vec \eta) (\nabla v) & =  -  A [\psi \vec \eta]  =  \lim_{\e \to 0}  A^\e [ \psi \vec \eta]  = \lim_{\e \to 0}   \int_B    \sum_{m=1}^3 (N^{\e,m} \nabla^2  u^m_\e)  \psi  \vec \eta \\ & = \lim_{\e \to 0}  - \int_B    \sum_{m=1}^3  {\rm div} (N^{\e,m} \psi  \vec \eta)   \nabla  u^m_\e   \\ & =   \lim_{\e \to 0}   - \int_B    \sum_{m=1}^3   {\rm div} (N^{\e,m} \psi  \vec \eta) (\nabla  u^m_\e - \nabla u^m) = 0,
\end{aligned}
$$  where we used \eqref{estims} and \eqref{estimN} to obtain a convergence of order $o(\e^{2\alpha-1})$ in the last line. Following the same line of argument as before $v$ is hence developable in view of Lemma \ref{weakconst}.  \end{proof} 
\appendix
\renewcommand{\thesection}{\Roman{section}} 

\section{Developabiltiy of each component can be shown independently}\label{component}
  
   \begin{proposition}\label{compdev}
   Assume $\Omega\subset \R^2$ is an arbitrary domain, $ 2/3 \le \alpha <1$  and let $u\in c^{1, \alpha} (\Omega, \R^3)$ be an isometric immersion.    Then for each $m=1,2,3$, the component $u^m$ is developable in $\Omega$.    
   \end{proposition}

   \begin{remark}
Obviously this statement does not guarantee that the constancy lines or regions of $\nabla u^m$ are the same for the three components. If this fact is independently shown (e.g.\@ through geometric arguments)  Theorem \ref{iso23} will also be 
proved through this approach. 
 \end{remark} 
 \begin{proof}
 In view of Thereom \ref{mp23dvp}, it suffices to show that $\Det D^2 u^m=0$  for each $m$. Once again, we consider the mollifications $u_\e = u\ast \phi_\e$ defined on a suitably chosen domain $V\Subset \Omega$ and use \eqref{uij} to calculate for  m=1,2,3 and $\psi \in C^\infty_c(V)$:
  
 $$
\begin{aligned}
\displaystyle -\frac 12{\rm curl}^T {\rm curl} (\nabla u^m \otimes \nabla u^m) [\psi] & = 
\lim_{\e\to 0} -\frac 12 {\rm curl}^T {\rm curl} (\nabla u^m_\e \otimes \nabla u^m_\e) [\psi]  = \lim_{\e \to 0}  {\rm det}\nabla^2 u^m_\e [\psi] \\ & 
=  \lim_{\e \to 0}   \int_\Omega  {\rm det} (\Gamma^{k,\e} \partial_k u^m_\e+  A^\e N^{\e,m}  ) \psi \\ & = 
\lim_{\e \to 0}  \int _\Omega \det (A^\e) (N^{\e,m})^2 \psi  +  \lim_{\e \to 0}    \int_\Omega {\rm det} (\Gamma^{k,\e} \partial_k u^m_\e) \psi \\ & 
+ \lim_{\e \to 0}    \sum_{\mbox{some indices}\,\, i,j,r,s}  \int_{\Omega} A_{ij}^\e N^{\e,m} (\Gamma^{k,\e}_{rs} \partial_k u_\e^m )\psi.
\end{aligned} 
$$  The second term obviously converges to $0$ in view of \eqref{Cffl}. 
For the third  term we use \eqref{estims}, \eqref{estimA} and  \eqref{Cffl} to prove:
$$
\Big |\int_{\Omega} A_{ij}^\e N^{\e,m} (\Gamma^{k,\e}_{rs} \partial_k u_\e^l )\psi  \Big | \le C \|A^\e\|_{0;V} \|\Gamma^{k,\e}\|_{0;V}  \|\psi\|_{L^1}  \le C   \|\psi\|_{L^1} o(\e^{3\alpha-2}) \to 0. 
$$ Finally, the vanishing of the first term follows from   Propostion \ref{Adet0} and \eqref{estimN} since
$$
\begin{aligned}
\displaystyle \Big |\int _\Omega \det (A^\e) (N^{\e,m})^2 \psi  \Big | & \le C \|(N^{\e,m})^2 \psi \|_{W^{1,1}} o(\e^{2\alpha-1}) \\ &
\le C   \Big ( \|\psi\|_{W^{1,1}} o( \e^{2\alpha-1}) + \|\nabla N^\e\|_{0;V} \|\psi\|_{L^1} o(\e^{2\alpha-1}) \Big )  \\ &  \le C   \|\psi\|_{W^{1,1}} o(\e^{3\alpha-2}) \to 0.   
 \end{aligned}   
$$ \end{proof}

  \section{Little H\"older spaces and modulus of continuity}\label{litmod}
   
Here we will prove  two auxiliary statements regarding little H\"older spaces.
  
\begin{lemma}\label{litmolapp}
Let $V\subset U\subset \R^n$ be open sets such that ${\rm dist}{(\overline V, \partial U)} >0$ and $f: U\to \R$. If for $0<\alpha<1$, $[f]_{0,\alpha;U|r}$ is $o(1)$ as a function of $r$, 
then $\|f_\e - f\|_{0,\alpha;V}\to 0$  as $\e \to 0$.  
\end{lemma}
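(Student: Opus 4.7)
The plan is to control $\|f_\e - f\|_{0,\alpha;V}$ by a constant multiple of $[f]_{0,\alpha;U|\e}$, which goes to zero by hypothesis. Throughout we take $\e < {\rm dist}(\overline V,\partial U)$ so that $f(x-z)$ is well defined for $x\in V$, $z\in B^\e$.

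First, the sup-norm part is immediate: for $x \in V$,
\[
|f_\e(x) - f(x)| = \Big|\int_{B^\e}(f(x-z) - f(x))\phi_\e(z)\,dz\Big| \le [f]_{0,\alpha;U|\e}\,\e^\alpha,
\]
and the right-hand side is $o(\e^\alpha) = o(1)$.

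For the H\"older seminorm on $V$, set $g_\e := f_\e - f$ and take $x,y\in V$, $x\neq y$. I would split into two cases according to the relative size of $|x-y|$ and $\e$.

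\textbf{Case A:} $|x-y|\ge \e$. Using only the sup-norm bound just obtained,
\[
\frac{|g_\e(x)-g_\e(y)|}{|x-y|^\alpha} \le \frac{2\|g_\e\|_{0;V}}{\e^\alpha} \le 2\,[f]_{0,\alpha;U|\e}.
\]

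\textbf{Case B:} $|x-y|<\e$. Split $|g_\e(x)-g_\e(y)| \le |f_\e(x)-f_\e(y)| + |f(x)-f(y)|$. The second term is bounded by $[f]_{0,\alpha;U|\e}\,|x-y|^\alpha$ directly from the definition of the H\"older modulus. For the first term, the key observation is that for each fixed $z\in B^\e$ the two points $x-z$ and $y-z$ lie in $U$ and still satisfy $|(x-z)-(y-z)| = |x-y|<\e$, so
\[
|f_\e(x) - f_\e(y)| \le \int_{B^\e}|f(x-z)-f(y-z)|\phi_\e(z)\,dz \le [f]_{0,\alpha;U|\e}\,|x-y|^\alpha.
\]
Combining, $|g_\e(x)-g_\e(y)|/|x-y|^\alpha \le 2\,[f]_{0,\alpha;U|\e}$ in this case as well.

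In both cases $[f_\e-f]_{0,\alpha;V} \le 2\,[f]_{0,\alpha;U|\e}$, and combined with $\|f_\e - f\|_{0;V} \le [f]_{0,\alpha;U|\e}\e^\alpha$ we obtain
\[
\|f_\e - f\|_{0,\alpha;V} \le [f]_{0,\alpha;U|\e}(\e^\alpha + 2) \longrightarrow 0 \quad \text{as } \e\to 0,
\]
since by hypothesis $[f]_{0,\alpha;U|\e} = o(1)$. There is no real obstacle here; the only point requiring slight care is the translation-invariance argument in Case B, which relies on taking $\e$ small enough that the supports of the translated mollifier stay inside $U$.
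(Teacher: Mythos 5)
Your proof is correct and takes essentially the same approach as the paper: both split the H\"older quotient into the regimes $|x-y|\ge\e$ (controlled by the $C^0$-bound $\|f_\e-f\|_{0;V}\le c[f]_{0,\alpha;U|\e}\e^\alpha$) and $|x-y|<\e$ (controlled by translation invariance of the convolution together with the small-scale H\"older modulus of $f$). The only cosmetic difference is that you argue pointwise via Cases A/B rather than through the seminorm notation $[\cdot]_{0,\alpha;V|\e}$, and you silently assume $\phi\ge 0$ when pulling $\phi_\e$ out of the absolute value; for a general mollifier with $\int\phi=1$ one should keep $|\phi_\e|$ and accept a harmless constant $c=\|\phi\|_{L^1}$, exactly as in the paper.
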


\begin{proof}

Let $\e < {\rm dist}{(\overline V, \partial U)}$. We first note that by Lemma \ref{estimcvls}-(i) we have
\begin{equation}  \label{semiest} 
 \|f_\e - f\|_{0;V}  \le  c[f]_{0,\alpha;U|\e} \e^\alpha.  
\end{equation}  We therefore estimate 
\begin{equation*} 
\begin{array}{ll}
[f_\e -f]_{0,\alpha; V} &  \le [f_\e - f]_{0, \alpha; V |\e} +  
\displaystyle \sup_{\begin{array}{c} x,y \in V \\   \e < |x-y| \end{array}} 
\frac{|f_\e(x) - f(x)  + f_\e (y)- f(y)|} {|x-y|^\alpha} 
\\ & \\ & \le  [f_\e - f]_{0, \alpha;V|\e} +  c\|f_\e - f\|_{0;V} \e^{-\alpha} \\ & 
\le  [f_\e]_{0, \alpha; V |\e} + [f] _{0, \alpha;V|\e}  + c [f]_{0,\alpha;U|\e} \\ &  \le [f_\e]_{0, \alpha; V |\e} + c  [f]_{0,\alpha;U|\e}. 
\end{array}
\end{equation*}   We now observe that 
$$
\begin{array}{ll}
 [f_\e] _{0, \alpha;V|\e} & =  \displaystyle  \sup_{\begin{array}{c} x,y \in  V \\   0 < |x-y| \le \e \end{array}} 
\frac{\Big |\displaystyle \int_{\R^n} (f (x-z)-  f (y-z)) \phi_\e (z) \, dz\Big |}{\displaystyle |x-y|^\alpha}  
\\ &  \le   \displaystyle  \sup_{\begin{array}{c} x,y \in  V \\   0 < |x-y| \le \e \end{array}} 
\displaystyle \int_{\R^n}  \frac{|f (x-z)-  f (y-z)|}{\displaystyle |x-y|^\alpha}| \phi_\e (z)|  \, dz 
\\ & \le c [f]_{0, \alpha;U|\e}.    
\end{array}
$$
Combining the latter two estimates  we hence obtain
$$
[f_\e -f]_{0,\alpha; V} \le  c [f]_{0,\alpha;U|\e}. 
$$ In view of \eqref{semiest} and the main assumption, we conclude with the  desired approximation of $f$ by $f_\e \in C^\infty(\overline V)$ in $C^{0,\alpha}$.
\end{proof}

\begin{proposition}\label{extension} 
Let $U\subset\R^n$ be a bounded weakly Lipschitz domain and let $f: U \to \R$ be uniformly continuous. Then $f$ admits an extension $\tilde f: \R^n \to \R$ with  $\omega_{\tilde f;\R^n}\le c(U) \omega_{f;U}$.   
\end{proposition}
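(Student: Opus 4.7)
The plan is to use a McShane-type extension formula, but with a subadditive modulus of continuity obtained from the intrinsic (geodesic) distance inside $\overline U$, which is what the Lipschitz hypothesis is used for.

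\textbf{Step 1 (Quasi-convexity of $\overline U$).} I would first invoke the standard fact that a bounded (weakly) Lipschitz domain is quasi-convex: there exists $L=L(U)\ge 1$ such that any two points $x,y\in\overline U$ can be joined by a rectifiable curve in $\overline U$ of length at most $L|x-y|$. The proof goes by covering $\partial U$ by finitely many neighborhoods on which $\partial U$ is the graph of a Lipschitz function, straightening the boundary in local coordinates, and combining with straight-line connections in the interior. Defining the intrinsic distance
$$d_U(x,y):=\inf\bigl\{\ell(\gamma):\gamma\subset \overline U\text{ rectifiable from }x\text{ to }y\bigr\},$$
quasi-convexity then reads $|x-y|\le d_U(x,y)\le L|x-y|$ on $\overline U$.

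\textbf{Step 2 (Subadditive intrinsic modulus).} Since $f$ is uniformly continuous on $U$, it extends continuously to $\overline U$. Set
$$\omega^*(r):=\sup\bigl\{|f(x)-f(y)|:x,y\in\overline U,\ d_U(x,y)\le r\bigr\}.$$
A chaining argument along a near-geodesic (split it at arc length $a$ out of a total length $\le a+b+\varepsilon$) yields subadditivity, $\omega^*(a+b)\le \omega^*(a)+\omega^*(b)$. Since $d_U\ge|\cdot|$ and $d_U\le L|\cdot|$, one also gets $\omega^*(r)\le \omega_{f;U}(r)\le \omega^*(Lr)$. The function $\tilde\omega(r):=\omega^*(Lr)$ is again subadditive and satisfies $|f(x)-f(y)|\le \omega^*(d_U(x,y))\le \tilde\omega(|x-y|)$ for all $x,y\in\overline U$.

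\textbf{Step 3 (McShane extension and estimate).} I would then extend by the classical infimum formula
$$\tilde f(x):=\inf_{y\in\overline U}\bigl(f(y)+\tilde\omega(|x-y|)\bigr),\qquad x\in\R^n.$$
The bound from Step 2 forces $\tilde f\equiv f$ on $\overline U$ (taking $y=x$ gives $\le$; the reverse uses $f(y)+\tilde\omega(|x-y|)\ge f(y)+|f(x)-f(y)|\ge f(x)$), and a one-line triangle-inequality argument together with subadditivity of $\tilde\omega$ yields $|\tilde f(x_1)-\tilde f(x_2)|\le \tilde\omega(|x_1-x_2|)$ on all of $\R^n$. Finally, subadditivity and monotonicity give $\omega^*(Lr)\le \lceil L\rceil\omega^*(r)\le \lceil L\rceil\omega_{f;U}(r)$, so
$$\omega_{\tilde f;\R^n}(r)\le\tilde\omega(r)\le c(U)\omega_{f;U}(r),$$
with $c(U)=\lceil L(U)\rceil$, which is the claim.

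\textbf{Main obstacle.} The only nontrivial ingredient is the quasi-convexity of Step 1: without it the Euclidean modulus $\omega_{f;U}$ need not be subadditive and the McShane formula has no reason to produce an extension with modulus comparable to $\omega_{f;U}$. Everything else is formal manipulation of subadditive functions and the standard McShane inequality.
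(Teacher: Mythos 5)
Your proof is correct, and it takes a genuinely different route from the paper's, although both hinge on producing a subadditive modulus out of $\omega_{f;U}$ via the Lipschitz geometry. The paper localizes: it covers $\partial U$ by bilipschitz charts onto half-cylinders $V_i^+ = B_i\times(0,1)$, exploits the \emph{convexity} of each $V_i^+$ to make the pushed-forward modulus $\omega_i$ subadditive, extends each piece via a McShane/Wells--Williams theorem, and then patches the local extensions with a partition of unity. You instead globalize the same geometric input: quasi-convexity of $\overline U$ (which is essentially the union over charts of the local straightening argument, so it does hold for weakly Lipschitz domains even though your sketch of Step 1 is phrased for graph-Lipschitz boundaries) gives $|x-y|\le d_U(x,y)\le L|x-y|$, whence the intrinsic modulus $\omega^*$ is subadditive by chaining along near-geodesics, and a single McShane extension with $\tilde\omega(\cdot)=\omega^*(L\cdot)$ finishes the job. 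What your approach buys is that it bypasses the partition-of-unity step entirely; this is not merely cosmetic, since the paper's patching $\tilde f=\sum\theta_i\tilde f_i$ introduces a lower-order term of the form ${\rm Lip}(\theta_i)\,\|f\|_\infty\, r$ in the modulus of the product, which is not controlled by $c(U)\omega_{f;U}(r)$ without first normalizing $f$ (e.g.\@ subtracting its value at a point); your global formula avoids this issue altogether. One small technicality you should be aware of in Step 2: the chaining gives $\omega^*(a+b)\le\omega^*(a)+\omega^*(b+\varepsilon)$ for every $\varepsilon>0$, so strictly you should pass to the right-continuous envelope of $\omega^*$ (or define $\omega^*$ with $d_U<r$) before asserting clean subadditivity; this does not affect any of the subsequent estimates.
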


\begin{proof}   $U$ is a bounded weakly Lipschitz domain, i.e.\@ by the standard definition,  there is a covering of 
$\overline U$ of open sets $U_i$ and charts $\psi_i: U_i \to \R^n$, $0\le i\le N$ such that $U_0 \subset U$  and for $0 \le i\le N$,
$\psi_i $ is a bilipschitz diffeomorphism between $U_i$ and an open cylinder $V_i:= B_i \times (-1,1) \subset \R^n$ such that $\psi_i(U\cap U_i)= V_i^+ := B_i \times  (0,1)$ and $\psi_i (\partial U \cap U_i) = B_i \times \{0\}$, where $B_i \subset\R^{n-1}$ is the unit ball.  

For each $1 \le i\le N$, consider the function $h_i:= f \circ \psi^{-1} : V_i^+ \to \R$. We can estimate the modulus of continuity of each $h_i$ from above by the modulus of continuity of $f$ using the maximum bilipschitz constant of the $\psi_i$, which depends on $U$:   
 $$
\omega_i:= \omega_{h_i; V_i^+} \le c(U) \omega_{f; U}.  
$$
Note that $\omega_i$ is necessarily nondecreasing by definition. Since each $V_i^+$ is convex,   $\omega_i$ is also subadditive, i.e.\@ 
for all $r_1, r_2\ge 0$, 
$$
\omega_i(r_1+ r_2) \le \omega_i (r_1) + \omega_i(r_2).
$$ Moreover the uniform continuity of $f$  implies that $$\lim_{r\to 0} \omega_i (r)=0.$$ A direct application of \cite[Theorem 13.16]{WW} implies  that  $h_i$ admits an extension $\tilde h_i$, given by
$$
\tilde h_i(x) := \sup \{h_i(y) - \omega_i (|x-y|); \,\, y\in V_i^+\},
$$ to the whole $\R^n$ also satisfying  
$$
\omega_{\tilde h_i; \R^n} \le \omega_i \le c(U) \omega_{f; U}.  
$$  Let 
$\{\theta_i\}_{0 \le i\le N}$ be a partition of unity corresponding to the covering $U_i$, i.e.\@ 
$$
\forall \, 0 \le  i \le N \quad \theta_i \in C^\infty_c (U_i, [0,1]) \quad \mbox{and}\quad  \sum_{i=0}^N \theta_i =1.  
$$   We extend $\theta_0 f :U_0 \to \R$ to $\tilde  f_0:\R^n \to \R$, and $\theta_i (\tilde h_i \circ \psi_i) : U_i \to \R$  to $\tilde f_i: \R^n \to \R$  by the trivial zero extension and  write 
$$
\tilde f:= \sum_{i=0}^N \tilde f_i. 
$$ The conclusion follows.   
\end{proof}

\end{document}